\DeclareMathAlphabet{\mathbbold}{U}{bbold}{m}{n}
\newtheorem{theo}{\textbf{Theorem}}[section]
\newtheorem{thm}[theo]{\textbf{Theorem}}
\newtheorem{lem}[theo]{\textbf{Lemma}}
\newtheorem{prop}[theo]{\textbf{Proposition}}
\theoremstyle{definition}
\newtheorem{defn}[theo]{\textbf{Definition}}
\theoremstyle{remark}
\newtheorem{rem}[theo]{\textbf{Remark}}
\newcommand\vectorize[1]{\left(\begin{matrix}#1\end{matrix}\right)}
\newcommand{\bsl}{\backslash}
\newcommand\pulsenorm[1]{\underset{x-ct\in(-a_0, a_0)}{\sup}\left(#1\right)}
\def\minmu{\mu^0}
\def\maxmu{\mu^\infty}
\def\mingamma{\gamma^0}
\def\maxgamma{\gamma^\infty}
\def\minr{r^0}
\def\maxr{r^\infty}
\def \l {\lambda _1}
\def \lep {\lambda _1^\ep}
\def\ep{{\varepsilon}}
\def\Lp#1{\mathbf L^{#1}}
\def\Lpper#1{\Lp{#1}_{per}}
\def\Hq#1{\mathbf H^{#1}}
\def\Hqper#1{\Hq{#1}_{per}}
\def\Cont{\mathbf C^0}
\def\Contper{\Cont_{per}}
\def\Lip{\mathbf C^{0,1}}
\def\Lipper{\Lip_{per}}
\def\Lippertheta{\mathbf C^{0,\theta}_{per}}
\def\Cun{\mathbf C^1}
\def\Cunper{\Cun_{per}}
\def\R{\mathbb R}
\newcommand\Homotopyspace{\Cunper(\Omega)}
\newcommand\Normalizationleft[1]{\underset{\Omega_0}{\sup}~(#1)}   
\newcommand\thereiam{} 
\title{Pulsating fronts for  Fisher-KPP systems with mutations
as  models in evolutionary epidemiology}
\date{}
\begin{document}

\maketitle


\begin{center}
{\large\bf Matthieu Alfaro and Quentin Griette \footnote{ IMAG, Universit\'e de
Montpellier, CC051, Place Eug\`ene Bataillon, 34095 Montpellier
Cedex 5, France. E-mail: matthieu.alfaro@umontpellier.fr, quentin.griette@umontpellier.fr}.}\\
[2ex]
\end{center}

\vspace{10pt}

\tableofcontents


\begin{abstract} We consider a periodic reaction diffusion system which, because of competition between $u$ and $v$, does not enjoy the comparison principle. It also takes into account mutations, allowing $u$ to switch to $v$ and vice versa. Such a system serves as a model in evolutionary epidemiology where two types of pathogens compete in a heterogeneous environment while mutations can occur, thus allowing coexistence.

We first discuss the existence of nontrivial positive steady states, using some bifurcation technics. Then, to sustain the possibility of invasion when nontrivial steady states exist, we construct pulsating fronts. As far as we know, this is the first such construction in a situation where comparison arguments are not available.
\\

\noindent{\underline{Key Words:} reaction diffusion systems, pulsating fronts, evolutionary epidemiology, bifuraction technics, Bernstein gradient estimate, Harnack inequality.}
\\

\noindent{\underline{AMS Subject Classifications:} 35K57, 35B10, 92D15, 92D30.}
\end{abstract}


\section{Introduction}\label{s:intro}

This work is concerned with the heterogeneous reaction diffusion system
\begin{equation}\label{syst}
\begin{cases}
\partial _t u=\partial _{xx}u+u\left[r_u(x)-\gamma _u(x)(u+v)\right]+\mu (x)v-\mu (x)u, \quad t>0, \, x\in \R,\vspace{5pt}
\\
\partial _t v=\partial _{xx} v+v\left[r_v(x)-\gamma _v(x)(u+v)\right]+\mu (x)u-\mu (x)v, \quad\,\, t>0, \, x\in \R,
\end{cases}
\end{equation}
where $r_u$, $r_v$ are periodic functions and $\gamma _u$, $\gamma _v$, $ \mu $ are periodic positive functions. After discussing the existence of nontrivial steady states via bifurcation technics, we construct pulsating fronts, despite the lack of comparison principle for \eqref{syst}. Before going into mathematical details, let us  describe the relevance of the the above system in evolutionary epidemiology.

\medskip

System \eqref{syst} describes a theoretical population divided into two genotypes with respective densities $ u(t, x) $ and $ v(t, x) $, and living in  a 
one-dimensional habitat $ x\in\mathbb R $. We assume that each genotype yields a different phenotype which also undergoes the influence of the environment. The 
difference in phenotype is expressed in terms of growth rate, mortality and competition, but we assume that the diffusion of the individuals is the same for each 
genotype. Finally, we take into account mutations occuring between the two genotypes. 

The reaction coefficients $ r_u $ and $ r_v $ represent the intrinsic growth rates, which depend on the environment and take into account both birth and death rates. Notice that
$r_u$ and $r_v$ may take some negative values, in deleterious areas where the death rate is greater than the birth rate. Function $\mu $ corresponds to the mutation rate between the two species. It imposes a truly \textit{cooperative} dynamics in the small populations regime, and couples 
the dynamics of the two species. In particular, one expects that, at least for small mutation rates, \textit{mutation aids survival and coexistence}. We also make the 
assumption that the mutation process is symmetric. From the mathematical point of view, this
simplifies some of the arguments we use and improves the readability of the paper. We have no doubt that similar results hold in the non-symmetric case, though the proofs may be more involved.

In this context, the ability of the
species to survive \textit{globally in space}  depends on the sign of the principal eigenvalue of the linearized operator around extinction $(0,0)$, as we will show further, which involves the coefficients $r_u$, $r_v$, $\mu$.

Finally, $ \gamma_u $ and $ \gamma_v $ represent the strength of the competition (for e.g. a finite resource) between the two strains. The associated dynamics arises when populations begin to grow.  It has no influence on the survival of the two species, but 
regulates 
the equilibrium densities of the two populations. 

Such a framework is particularly suited to model the propagation of a pathogenic species within a population of hosts. Indeed system \eqref{syst} can easily be 
derived from a host-pathogen microscopic model \cite{Gri-Rao-Gan-15} in which we neglect the influence of the pathogen on the host's diffusion. 

\medskip

In a homogeneous environment the role of mutations, allowing survival for both $u$ and $v$,  has recently been
studied by Griette and Raoul \cite{Gri-Rao}, through the  system
\begin{equation*}\label{syst-mut}
\begin{cases}
\partial _t u=\partial _{xx}u+u(1-(u+v))+\mu (v-u)\vspace{5pt}
\\
\partial _t v=\partial _{xx} v+rv\left(1-\displaystyle
\frac{u+v}K\right)+\mu (u-v).
\end{cases}
\end{equation*}
On the other hand, it is known that the spatial structure 
has a great influence on host-parasites systems, both at the epidemiological and evolutionary levels  \cite{Bes-Web-11}, \cite{Ash-Gup-14}, \cite{Lio-Gan-15}. In order to understand the  influence of heterogeneities, we aim at studying steady states and propagating solutions, or {\it fronts}, of system \eqref{syst}.

\medskip

\noindent{\it Traveling fronts in homogeneous environments.}  In a homogeneous environment, propagation in reaction diffusion equations is typically described by \textit{traveling waves}, namely solutions to the parabolic equation consisting of a constant profile shifting at a constant speed.  This goes back to the seminal works \cite{Fis-37}, \cite{Kol-Pet-Pis-37} on the  Fisher-KPP equation
\begin{equation*}
 \partial_t u=\Delta u+u(1-u),
\end{equation*}
a model for the spreading of advantageous
genetic features in a population. The literature on traveling fronts
for such homogeneous reaction diffusion equations is very large, see 
\cite{Fis-37}, \cite{Kol-Pet-Pis-37}, \cite{Aro-Wei-75, Aro-Wei-78}, \cite{Fif-Mac}, \cite{Gar82}, \cite{Ber-Nic-Sch85} among others. In such situations, many techniques based on the comparison
principle --- such as some monotone iterative schemes
 or the sliding method \cite{Ber-Nir-91}--- can be used to get {\it a priori} bounds, existence and monotonicity properties of  the solution.

Nevertheless, when considering nonlocal effects or systems, the comparison principle may no longer be available so that the above techniques do not apply and the situation is more involved. One usually uses topological degree arguments to construct traveling wave solutions: see
\cite{Ber-Nad-Per-Ryz}, \cite{Fan-Zha-11}, \cite{Alf-Cov-12}, \cite{Ham-Ryz-14} for
the nonlocal Fisher-KPP equation, \cite{Alf-Cov-Rao-14} for a
bistable nonlocal equation, \cite{Alf-Cov-Rao-13} for a nonlocal equation in an evolutionary context, \cite{Gri-Rao}
for a homogeneous system in an evolutionary context... Notice also that the boundary conditions are then typically understood in a weak sense, meaning that the wave connects 0 to \lq\lq something positive'' that cannot easily be identified: for example, in the nonlocal Fisher-KPP equation the positive steady state $u\equiv
1$ may present  a Turing instability.

\medskip

In a heterogeneous environment, however, it is unreasonable to expect the existence of such a solution.
The particular type of propagating solution we aim at constructing in our periodic case  is the so called  \textit{pulsating front}, first introduced by Xin 
\cite{Xin-00} in the framework of flame
propagation.

\medskip

\noindent{\it Pulsating fronts in heterogeneous environments.} The definition of a pulsating front is the natural extension, in the periodic
framework, of the aforementioned traveling waves. We introduce a speed $ c $ and shift the origin with this speed to catch the asymptotic dynamics.  Technically, a \textit{pulsating front} (with speed $c$) is then a profile $ (U(s, x), V(s,x)) $ that is periodic in the space 
variable $ x $, and that connects $ (0, 0) $ to a non-trivial state, such that $ (u(t,x), v(t,x)):=(U(x-ct, c), V(x-ct, x)) $ solves \eqref{syst}. Equivalently, a pulsating front is a 
solution of \eqref{syst} connecting $ (0, 0) $ to a non-trivial state, and that satisfies the constraint
$$
\left(u\left(t+\frac{L}{c}, x\right), v\left(t+\frac{L}{c}, x\right)\right)=(u(t, x-L), v(t, x-L)), \quad \forall (t,x)\in \R ^2.
$$
 As far as monostable pulsating fronts are
concerned, we refer among others to the seminal works of Weinberger \cite{Wei-02},
Berestycki and Hamel \cite{Ber-Ham-02}. Let us also mention \cite{HZ},
\cite{Ber-Ham-Roq-05-n2}, \cite{Ham-08}, \cite{Ham-Roq-11} for
related results.

\medskip

One of the main difficulties we encounter when studying system \eqref{syst} is that two main dynamics co-exist. 
On the one hand, when the population is small,  \eqref{syst} behaves like a cooperative system which enjoys a comparison principle.
On the other hand, when the population is near a non-trivial equilibrium, \eqref{syst}
is closer to a competitive system.
Since those dynamics cannot be separated, our system does not admit any comparison principle, and standard
techniques such as monotone iterations cannot be applied. As far as we know, the present work is the first construction  of pulsating fronts in a situation where comparison arguments are not available.

\section{Main results and comments}\label{s:results} 

\subsection{Assumptions, linear material and notations}\label{ss:assumptions}

\noindent {\bf Periodic coefficients.} Throughout this work, and even if not recalled, we always make the folllowing assumptions.
Functions $ r_u, r_v, \gamma_u, \gamma_v, \mu  :\mathbb R\rightarrow \mathbb R $ are smooth and periodic with period $ L>0$.  
We assume further that $ \gamma_u$, $ \gamma_v $ and $ \mu $ are positive. We denote their bounds
\begin{equation*}
    \begin{matrix}
       & 0<&\mingamma\leq    & \gamma_u(x), \gamma_v(x)      &\leq \maxgamma         \\
    & 0<&\minmu\leq       & \mu(x)                        &\leq \maxmu            \\
        &   &\minr\leq        & r_u(x),r_v(x)                 & \leq \maxr,
    \end{matrix}
\end{equation*}
for all $x\in \R$. Notice that $ r_u $ and $ r_v $ are allowed to take negative values, which is an additional difficulty, in particular in the proofs of Lemma 
\ref{lem:firsthomotopyaprioriestimates} and Lemma \ref{lem:cnotto0}. The fact that $ r_u, r_v $ do not have a positive lower bound is the main reason why  we need to introduce several types of eigenvalue problems, see \eqref{eq:eigendirper} and  \eqref{eq:stationaryDirichlet}, to construct subsolutions of related problems. 

\medskip 

\noindent {\bf On the linearized system around $(0,0)$.} 
We denote by $ A $  the symmetric matrix field arising after linearizing system \eqref{syst} near the trivial solution $ (0, 0) $, namely
\begin{equation}\label{eq:deflinearizedmatrix}
A(x):=\vectorize{r_u(x)-\mu(x) & \mu(x) \\ \mu(x) & r_v(x)-\mu(x)}.
\end{equation}
Since $ A(x) $ has positive off-diagonal coefficients, the elliptic system associated with the linear operator $-\Delta -A(x)$ is cooperative, {\it fully coupled} and therefore satisfies the strong maximum principle as well as other 
convenient properties \cite{Bus-Sir04}.

\begin{rem}[Cooperative elliptic systems and comparison principle] Cooperative systems enjoy similar comparison properties as scalar elliptic operators. In particular, \cite{Bus-Sir04} and \cite{Dam-Pac-12} show that 
the maximum principle holds for cooperative systems if the principal eigenvalue is positive. Moreover,  Section 13 (see also the beginning of Section
14) of \cite{Bus-Sir04} shows that, for so-called {\it fully coupled systems}
(which is the case of all the operators we will encounter since $\mu(x)\geq \mu ^0>0$),  the converse holds. These facts will be used for instance in the proof of Lemma \ref{lem:speed}.
\end{rem}

Let us now introduce a principal eigenvalue problem that is necessary to enunciate our main results.

\begin{defn}[Principal eigenvalue]\label{def:vp}
    We denote by $\l $ the principal eigenvalue of the stationary operator $ -\Delta-A(x) $ with periodic conditions, where $ A $ is defined in \eqref{eq:deflinearizedmatrix}.
\end{defn}
In particular, we are equipped through this work with a principal eigenfunction $\Phi:=\vectorize{\varphi \\ \psi} $ satisfying
\begin{equation}\label{vp-u}
\begin{cases}
    -\Phi_{xx}-A(x)\Phi=\l \Phi \\
    \Phi \text{ is } L\text{-periodic}, \quad \Phi  \text { is positive}, \quad \Vert \Phi \Vert_{\Lp\infty}=1.
\end{cases}
\end{equation} 

For more details on principal eigenvalue for systems, we refer the reader to \cite{Bus-Sir04}, in particular to Theorem 13.1 
(Dirichlet boundary condition)  which provides the principal eigenfunction.  
Furthermore, in the case of symmetric (self-adjoint) systems as the one we consider, the equivalent definition 
\cite[(2.14)]{Dam-Pac-12} provides some additional properties, in
particular that  the eigenfunction minimizes the Rayleigh quotient.

\medskip 
\noindent {\bf Function spaces.}
To avoid confusion with the usual function spaces, we denote the function spaces on a couple of functions with a bold font. Hence $ \Lp{p}(\Omega):=L^{p}(\Omega)\times L^p(\Omega) $ for $ p \in [1, \infty] $ and $ \Hq{q}(\Omega):=H^q(\Omega)\times H^{q}(\Omega) $ for $ q\in\mathbb N$ are equipped with the norms
$$
    \left\Vert \vectorize{u \\ v}\right\Vert_{\Lp{p}}:= \left\Vert\vectorize{\Vert u\Vert_{L^p} \\ \Vert v\Vert_{L^p}}\right\Vert_p,\quad \left\Vert\vectorize{u\\v}\right\Vert_{\Hq{q}}:=\left\Vert\vectorize{\Vert u\Vert_{H^q} \\ \Vert v\Vert_{H^q}}\right\Vert_{2} .
   $$
   Similarly,
$ \mathbf C^{\alpha, \beta}:=C^{\alpha, \beta}\times C^{\alpha, \beta}$ for 
$ \alpha\in\mathbb N $ and $ \beta\in[0, 1]$ is equipped with 
$ \left\Vert\vectorize{u \\ v}\right\Vert_{\mathbf C^{\alpha, \beta}}:=\max\left(\Vert u\Vert_{C^{\alpha, \beta}}, \Vert v\Vert_{C^{\alpha, \beta}}\right)$  and $ \mathbf C^\alpha:=\mathbf C^{\alpha, 0} $.
The subscript of those spaces denotes a restriction to a subspace : $ \Lpper{p} $, $\Hqper{q}$, $\Contper$, $ \Lipper $, $ \Cunper $ for $L$-periodic functions, 
$\Hq{1}_0$ for functions that vanish on the boundary, etc. Those function spaces are Banach spaces, and $ \Hq 1 $, $\Hqper 1 $, $ \Hq 1_0$, $ \Lp 2 $ and $ \Lpper 2 $ have a canonical Hilbert structure.

\subsection{Main results}\label{ss:results}

As well-known in KPP situations, the sign of the principal eigenvalue $\l$ is of crucial importance for the fate of the population: we expect extinction when $\l >0$ and propagation (hence survival) when $\l <0$. To confirm this scenario, we first study the existence of a nontrivial nonnegative steady state of problem \eqref{syst}, that is a  nontrivial nonnegative $L$-periodic solution to the system
\begin{equation}\label{eq:systemorig}
\left\{\begin{array}{l}
-p''=(r_u(x)-\gamma_u(x)(p+q))p+\mu (x)q-\mu (x)p \vspace{5pt} \\
-q''=(r_v(x)-\gamma_v(x)(p+q))q+\mu (x)p-\mu (x)q.
\end{array}\right.
\end{equation}

\begin{theo}[On nonnegative steady states]\label{th:steady}
If $\l > 0$ then $(0,0)$ is the only nonnegative steady
state of problem \eqref{syst}.

On the other hand, if $\l < 0$ then there exists a nontrivial positive steady
state $(p(x)>0,q(x)>0)$ of problem \eqref{syst}.
\end{theo}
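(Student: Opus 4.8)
The plan is to split along the sign of $\l$. \emph{Suppose first $\l>0$} and let $(p,q)\geq(0,0)$ be an $L$-periodic solution of \eqref{eq:systemorig}. Since $\gamma_u,\gamma_v>0$ and $p,q\geq0$, the competition terms are nonpositive, so that $\vectorize{p\\q}$ is a nonnegative subsolution of the operator $-\partial_{xx}-A(x)$ with $L$-periodic conditions, $A$ as in \eqref{eq:deflinearizedmatrix} (indeed its first component is $-p''-[(r_u-\mu)p+\mu q]=-\gamma_u(p+q)p\leq0$, and similarly for the second). This operator is cooperative and fully coupled (because $\mu\geq\minmu>0$), so $\l>0$ makes it satisfy the maximum principle recalled above, which forces $\vectorize{p\\q}\leq\vectorize{0\\0}$ and hence $(p,q)\equiv(0,0)$. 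Self-containedly, one may instead multiply the two equations of \eqref{eq:systemorig} by $\varphi$ and $\psi$ respectively, add, integrate over one period and use \eqref{vp-u}; after integration by parts this gives
\begin{equation*}
\l\int_0^L\!\big(p\varphi+q\psi\big)=-\int_0^L\!\big(\gamma_u(p+q)p\,\varphi+\gamma_v(p+q)q\,\psi\big)\leq 0,
\end{equation*}
so $p\equiv q\equiv0$ since $\varphi,\psi>0$.

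Before treating $\l<0$ I would record the a priori bound that will replace comparison arguments. If $(p,q)\geq0$ solves \eqref{eq:systemorig} and $w:=p+q$, then adding the two equations makes the mutation terms cancel, leaving
\begin{equation*}
-w''=r_u p+r_v q-(\gamma_u p+\gamma_v q)w\leq \maxr\,w-\mingamma\,w^2 .
\end{equation*}
Evaluating at a maximum point of $w$ gives $\|w\|_{L^\infty}\leq\maxr/\mingamma$, whence $\|p\|_{L^\infty},\|q\|_{L^\infty}\leq K_0:=\maxr/\mingamma$, and elliptic regularity upgrades this to a $\mathbf C^{2,\alpha}_{per}$ bound. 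This step uses $\gamma_u,\gamma_v\geq\mingamma>0$ crucially, even though $r_u,r_v$ may take negative values.

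\emph{Now suppose $\l<0$.} One checks easily that $(M,M)$ with $M$ large is a supersolution of \eqref{eq:systemorig} and that $\delta\vectorize{\varphi\\\psi}$ is, for $\delta$ small, a subsolution lying below it; however \eqref{eq:systemorig} is quasimonotone only where $p,q$ are small (near a nontrivial equilibrium it is a competitive system), so monotone iteration between these barriers is not available, and I would use a global bifurcation argument instead. Introduce a parameter $\sigma\in\R$ by replacing $r_u,r_v$ with $r_u+\sigma,r_v+\sigma$; the linearization at $(0,0)$ becomes $-\partial_{xx}-A(x)-\sigma\,I$, whose principal periodic eigenvalue is $\l-\sigma$ (with the same eigenfunction $\vectorize{\varphi\\\psi}$), vanishing precisely at $\sigma^\star:=\l<0$. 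Recast the $\sigma$-problem as a fixed point $\Phi=\mathcal T_\sigma\Phi$, where $\mathcal T_\sigma\Psi$ solves $-\partial_{xx}\Phi+\eta\Phi=\eta\Psi+(\text{reaction of the }\sigma\text{-problem at }\Psi)$ with $\eta$ large so that the right-hand side is nonnegative on the relevant part of the positive cone; then $\mathcal T_\sigma$ is compact and leaves that cone of $\mathbf C^{0,\alpha}_{per}$ invariant. By Krein--Rutman the relevant eigenvalue of $D_\Phi\mathcal T_\sigma(0)$ is simple and crosses $1$ transversally at $\sigma=\sigma^\star$, so by Rabinowitz' global bifurcation theorem a connected continuum $\mathcal C$ of solutions with $\Phi>0$ emanates from $(\sigma^\star,0)$. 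By the case $\l>0$ applied to the $\sigma$-problem, nontrivial nonnegative solutions exist only for $\sigma\geq\sigma^\star$; by the a priori bound $\mathcal C$ stays bounded in $\Phi$ over bounded $\sigma$-ranges; and positivity persists along $\mathcal C$ away from $\{\Phi=0\}$ — a nonnegative nontrivial limit of positive solutions is positive, because $p$ solves $-p''+c(x)p=\mu q\geq0$ with $c$ bounded (by the a priori bound), likewise for $q$, so the strong maximum principle applies, and $p\equiv0$ would force $q\equiv0$. Since a continuum of positive solutions can meet $\{\Phi=0\}$ only at $\sigma^\star$ (higher eigenfunctions change sign), $\mathcal C$ must be unbounded, hence unbounded in $\sigma$; being connected, its $\sigma$-projection is an interval containing $\sigma^\star<0$ and arbitrarily large values, so it contains $0$. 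A point of $\mathcal C$ over $\sigma=0$ is the desired nontrivial positive steady state.

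The heart of the matter, and the main obstacle throughout, is the absence of a comparison principle for the full system: the natural monotone-iteration proof between the explicit barriers above fails because \eqref{eq:systemorig} is cooperative only near $(0,0)$ and competitive near equilibria. This forces the detour through global bifurcation, whose delicate points are (i) the uniform a priori bound above and (ii) showing that the continuum $\mathcal C$ genuinely descends to the physical value $\sigma=0$ — that is, controlling positivity along $\mathcal C$ and excluding a return to the trivial branch or a loss of compactness before $\sigma$ reaches $0$. A fixed-point-index variant is equally workable: $\l<0$ makes the index of $\mathcal T_0$ at the origin of the cone equal to $0$, the a priori bound makes its index on a large ball equal to $1$, and the nonzero difference produces a nontrivial solution in the annulus, positive by the strong maximum principle.
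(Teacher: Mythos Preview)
Your proposal is correct and follows essentially the same route as the paper for the case $\l<0$: introduce a real parameter shifting $r_u,r_v$, apply global bifurcation (Rabinowitz combined with Krein--Rutman) from the simple eigenvalue at $\sigma=\l$, use the a priori bound and the strong maximum principle to keep the branch inside the positive cone, and use the $\l>0$ case to force the branch to extend towards $\sigma\to+\infty$, hence across $\sigma=0$. The paper packages this into its Theorem~\ref{theo:orientedbifurcation} and applies it via the resolvent $L_M^{-1}$ of $-\partial_{xx}-A(x)+M$, but the content is the same as your argument.

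Two minor differences are worth noting. For $\l>0$, the paper uses a touching-point argument (slide $C\Phi$ down onto $(p,q)$ and get a contradiction at the contact point), whereas your integral identity $\l\int(p\varphi+q\psi)=-\int[\gamma_u(p+q)p\varphi+\gamma_v(p+q)q\psi]$ is shorter and works precisely because $A(x)$ is symmetric (so the operator is self-adjoint and $\Phi$ is also the adjoint eigenfunction); the touching-point argument would survive a non-symmetric mutation. For the a priori bound, the paper bounds $p$ and $q$ separately (Lemma~\ref{lem:uniformupperbound}, getting $C=\max(\maxr/\mingamma,\maxmu/\mingamma)$), while you bound the sum $w=p+q$ directly via $-w''\leq\maxr w-\mingamma w^2$; both give what is needed, namely a bound that is locally uniform in the bifurcation parameter.
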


Next we turn to the long time behavior of the Cauchy problem associated with \eqref{syst}. First, we prove extinction when the principal eigenvalue is positive.

\begin{prop}[Extinction]\label{prop:extinction} Assume $\l
>0$. Let a nonnegative and bounded initial condition $(u^0(x),v^0(x))$ be given. Then, any nonnegative solution $(u(t,x),v(t,x)))$ of \eqref{syst} starting from $(u^0(x),v^0(x))$ goes extinct exponentially fast as $t\to \infty$, namely
$$
\max\left(\Vert u(t,\cdot)\Vert _{L^\infty(\R)},\Vert v(t,\cdot)\Vert _{L^\infty(\R)}\right)=O(e ^{-\l t}).
$$
\end{prop}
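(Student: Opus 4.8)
This is the ``easy half'' of the Fisher--KPP dichotomy, and the plan is to squeeze a nonnegative solution $(u,v)$ between $0$ and a suitable multiple of the principal eigenfunction decaying at rate $\l$. The structural point that makes this work is that, for $u,v\geq0$, the competition terms $\gamma_u(x)u(u+v)$ and $\gamma_v(x)v(u+v)$ are nonnegative; hence, writing $w=\vectorize{u\\v}$, the solution is a nonnegative \emph{subsolution} of the linear cooperative system
\begin{equation*}
\partial_t w=\partial_{xx}w+A(x)w,
\end{equation*}
with $A$ as in \eqref{eq:deflinearizedmatrix}. Since $A$ has positive off-diagonal entries this system obeys the parabolic comparison principle (for bounded sub- and supersolutions on $\R$), and this will be the only comparison tool used; crucially, we never invoke a comparison principle for the nonlinear system \eqref{syst} itself, which indeed does not have one.

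For the supersolution I would use the periodic principal eigenfunction $\Phi=\vectorize{\varphi\\\psi}$ from \eqref{vp-u} and set, for a constant $C>0$ to be fixed, $\overline w(t,x):=Ce^{-\l t}\Phi(x)$. Using $-\Phi_{xx}-A(x)\Phi=\l\Phi$, a one-line computation gives $\partial_t\overline w-\partial_{xx}\overline w-A(x)\overline w=0$, so $\overline w$ is an exact (in particular, super-) solution of the linear cooperative system. Since $\Phi$ is continuous, $L$-periodic and positive it is bounded below on $\R$ by some $\delta>0$, so the choice $C:=\delta^{-1}\max\!\left(\Vert u^0\Vert_{L^\infty(\R)},\Vert v^0\Vert_{L^\infty(\R)}\right)$ ensures $\overline w(0,\cdot)\geq\vectorize{u^0\\v^0}$ componentwise. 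Before comparing on the whole line one also needs an a priori bound on the solution: adding the two equations, $s:=u+v\geq0$ satisfies $\partial_t s\leq\partial_{xx}s+\maxr\,s-\mingamma\,s^2$, so by the usual comparison argument for the scalar logistic equation $s$ --- hence $u$ and $v$ --- remains bounded, uniformly in $t\geq0$.

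Applying the comparison principle for the linear cooperative system to the bounded subsolution $w$ and the exact solution $\overline w$, with ordered initial data, then gives $0\leq u(t,x),v(t,x)\leq Ce^{-\l t}\Phi(x)\leq Ce^{-\l t}$ for every $t\geq0$ and $x\in\R$ (recall $\Vert\Phi\Vert_{\Lp\infty}=1$); the hypothesis $\l>0$ enters only through the sign of this exponential, and the announced $O(e^{-\l t})$ estimate follows. I do not expect a genuine obstacle in this proposition; the only slightly delicate point is that the comparison principle on $\R$ must be applied to \emph{bounded} sub- and supersolutions, which is exactly why the preliminary logistic bound is needed, and everything else is routine parabolic theory for cooperative systems.
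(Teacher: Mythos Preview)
Your proposal is correct and follows essentially the same route as the paper: both observe that a nonnegative solution of \eqref{syst} is a subsolution of the linear cooperative system $\partial_t w=\partial_{xx}w+A(x)w$, take $Me^{-\l t}\Phi(x)$ as an exact solution dominating the initial data, and conclude by the parabolic comparison principle for cooperative systems. Your version is in fact slightly more careful than the paper's, which does not spell out the preliminary boundedness of $(u,v)$ needed to apply comparison on the whole line; your logistic bound on $s=u+v$ fills that small gap.
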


The proof of Proposition \ref{prop:extinction} is rather simple so we now present it.
The cooperative parabolic system
\begin{equation}\label{syst-avec-compa}
\begin{cases}
\partial _t \bar u=\partial _{xx}\bar u+(r_u(x)-\mu (x))\bar u+\mu (x)\bar v\vspace{5pt}
\\
\partial _t \bar v=\partial _{xx} \bar v+(r_v(x)-\mu (x))\bar v+\mu (x)\bar u,
\end{cases}
\end{equation}
enjoys the comparison principle, see \cite[Theorem 3.2]{Fol-Pol-09}.  On the one hand, any nonnegative $(u(t,x),v(t,x))$ solution of \eqref{syst} is a subsolution of \eqref{syst-avec-compa}. On the other hand one can check that $(M\varphi(x)e^{-\l t},M\psi(x)e^{-\l t})$ --- with $(\varphi,\psi)$ the principal eigenfunction satisfying \eqref{vp-u}--- is a solution of  \eqref{syst-avec-compa} which is initially larger than $(u^{0},v^{0})$, if $M>0$ is sufficiently large. Conclusion then follows from the comparison principle.

\medskip

The reverse situation $\l <0$ is much more involved. Since in this case we aim at controlling the solution from below, the nonlinear term in \eqref{syst}
has to be carefully estimated. In order to show that the population does invade the whole line when $\l<0$, we are going to construct pulsating fronts for \eqref{syst}.

\begin{defn}[Pulsating front]\label{def:pul} A pulsating front for \eqref{syst} is a speed $c>0$ and a classical positive solution $(u(t,x),v(t,x))$ to \eqref{syst},
which satisfy the constraint
\begin{equation}\label{eq:proppuls}
 \vectorize{u(t+\frac Lc, x)\\v(t+\frac Lc,x)}=\vectorize{u(t, x-L)\\v(t, x-L)},\quad    \forall (t,x)\in\mathbb R^2,
\end{equation}
and supplemented with the boundary conditions
\begin{equation}\label{weak-boundary}
    \liminf_{t\to+\infty} \vectorize{u(t,x)\\v(t,x)}>\vectorize{0 \\ 0},\quad \lim_{t\to-\infty} \vectorize{u(t,x)\\v(t,x)}=\vectorize{0 \\ 0},
\end{equation}
locally uniformly w.r.t. $x$.
\end{defn}

Following \cite{Ber-Ham-Roq-05-n2}, we introduce a new set of variables that correspond to the frame of reference that follows the front propagation, that is
$ (s, x):=(x-ct, x)$. 
In these new variables, system \eqref{syst} transfers into
\begin{equation}\label{eq-puls}
\left\{\begin{array}{l}
-(u_{xx}+2u_{xs}+ u_{ss})-cu_s=(r_u(x)-\gamma_u(x)(u+v))u+\mu (x)v-\mu (x)u \vspace{5pt} \\
-(v_{xx}+2v_{xs}+ v_{ss})-cv_s=(r_v(x)-\gamma_v(x)(u+v))v+\mu (x)u-\mu (x)v,
\end{array}\right.
\end{equation}
and the constraint \eqref{eq:proppuls} is equivalent to the $ L $-periodicity in $ x $ of the solutions to \eqref{eq-puls}.
An inherent  difficulty to this approach is that the underlying elliptic operator, see the left-hand side member of system \eqref{eq-puls}, is degenerate. 
This requires to consider a regularization of the operator and to derive a series of {\it a priori} estimates that do not depend on the regularization, 
see \cite{Ber-Ham-02} or \cite{Ber-Ham-Roq-05-n2}. In addition to this inherent difficulty, the problem under consideration \eqref{syst} does not admit a 
comparison principle, in contrast with the previous results on pulsating fronts. Nevertheless, as in the traveling wave case, if we only require  boundary 
conditions in a weak sense --- see \eqref{weak-boundary} in Definition \ref{def:pul}--- then we can construct a pulsating front for 
\eqref{syst} when the underlying  principal eigenvalue is negative. This is the main result of the present paper since, as far as we know, this is the first construction 
of a pulsating front in a situation without comparison principle. 

\begin{theo}[Construction of a pulsating front]\label{th:pulsating} Assume $\l<0$.
    Then there exists a pulsating front solution to \eqref{syst}.
\end{theo}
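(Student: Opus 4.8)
The plan is to construct the pulsating front as a limit of solutions to regularized, non-degenerate problems posed on bounded cylinders, using a topological degree argument to handle the absence of a comparison principle, in the spirit of \cite{Ber-Ham-Roq-05-n2} but replacing monotone-iteration input by degree theory and quantitative elliptic estimates. First I would fix a large $a>0$ and a small $\eta>0$, and study on the truncated cylinder $\Omega_a := (-a,a)\times(0,L)$ the elliptic system obtained from \eqref{eq-puls} by adding the uniformly elliptic regularization $-\eta(u_{xx}+u_{ss})$ (or more precisely making the degenerate operator $(\partial_x+\partial_s)^2$ into $(1+\eta)\partial_{xx}+2\partial_{xs}+(1+\eta)\partial_{ss}$), with $L$-periodicity in $x$, a normalization condition on the section $\{s=0\}$ such as $\sup_{\Omega_0}(u+v)=\kappa$ for a well-chosen small constant $\kappa>0$ tied to the eigenfunction $\Phi$, and homogeneous Dirichlet (or Neumann-type) data at $s=\pm a$. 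The speed $c>0$ is treated as an unknown, and the normalization at $s=0$ is precisely what fixes it; this is the standard device that turns the translation invariance into a well-posed problem.

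Next I would set up a homotopy: deform the full nonlinearity $u[r_u-\gamma_u(u+v)]+\mu(v-u)$ down to its linear cooperative part $u[r_u-\mu]+\mu v$ (killing the competition terms $\gamma_u,\gamma_v$ by a parameter $\tau\in[0,1]$), and along this homotopy establish the \textit{a priori} estimates that are the technical heart of the paper — this is where Lemma \ref{lem:firsthomotopyaprioriestimates} and Lemma \ref{lem:cnotto0} are invoked. The key points to control are: (i) a uniform upper bound on $\|(u,v)\|_{L^\infty}$, obtained by comparison with the cooperative supersolution system \eqref{syst-avec-compa} and its steady states (here one genuinely uses cooperativity of the \emph{small-population} dynamics, even though the full system lacks a comparison principle); (ii) a uniform lower bound showing the solution does not collapse to zero, coming from the normalization at $s=0$ together with a Harnack inequality for the cooperative fully-coupled elliptic system (cf. the Remark after \eqref{vp-u}) propagated in the $s$-direction; (iii) a uniform gradient bound via a Bernstein-type estimate, adapted to the regularized degenerate operator so that the constant is independent of $\eta$ and $a$; and (iv) two-sided bounds on the speed $c$, the lower bound $c\ge c_0>0$ being the delicate one and the reason the hypothesis $\l<0$ enters — a genuine compactly supported subsolution built from the periodic principal eigenfunction $\Phi$ (and, where $r_u,r_v$ change sign, from the auxiliary Dirichlet eigenvalue problems \eqref{eq:eigendirper}, \eqref{eq:stationaryDirichlet}) forces the front to move, and an explicit exponential supersolution $e^{-\lambda(s)}\Phi(x)$ caps $c$ from above.

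With these estimates in hand I would run Leray--Schauder degree on the space $\Cunper(\Omega_a)\times(0,\infty)$ (solutions paired with $c$): the estimates confine all solutions along the homotopy to a fixed bounded open set that avoids the trivial solution (thanks to the $s=0$ normalization) and avoids $c=0$, so the degree is homotopy-invariant; at $\tau=0$ the problem is the linear cooperative one whose solvability for a unique $c$ and positive solution is classical KPP theory for cooperative periodic systems, giving nonzero degree. Hence \eqref{eq-puls} with competition restored has, for each $(a,\eta)$, a positive $L$-periodic-in-$x$ solution $(u_{a,\eta},v_{a,\eta},c_{a,\eta})$ satisfying the normalization. Finally I would pass to the limit $\eta\to0$ then $a\to+\infty$ (or diagonally): the uniform-in-$(a,\eta)$ $\Cun$ and $L^\infty$ bounds plus interior Schauder estimates give local convergence to an entire solution $(u,v)$ of \eqref{eq-puls} on $\R\times(0,L)$, $L$-periodic in $x$, positive, with $c=\lim c_{a,\eta}\in[c_0,C]$; the normalization survives the limit so $(u,v)\not\equiv(0,0)$, and monotonicity/decay in $s$ obtained from sub/supersolutions yields the weak boundary behaviour \eqref{weak-boundary} — $\lim_{s\to-\infty}=(0,0)$ and $\liminf_{s\to+\infty}>(0,0)$. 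Undoing the change of variables $(s,x)=(x-ct,x)$ produces the desired pulsating front. I expect the main obstacle to be step (iv), the uniform positive lower bound on the speed together with the non-degeneration (lower bound ii) of the solution: without a comparison principle for the full system one cannot argue monotonically, so these must be extracted by carefully combining the cooperative structure near $(0,0)$, the sign condition $\l<0$, the Harnack inequality, and the Bernstein gradient bound — precisely the ingredients advertised in the abstract.
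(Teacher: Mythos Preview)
Your overall architecture matches the paper closely: regularize the degenerate operator, work on a strip with a normalization that pins down the speed, use Leray--Schauder degree along a homotopy that ends at a cooperative problem, derive uniform estimates (upper bound, Bernstein gradient bound, speed bounds), and pass to the limit. The ingredients you list --- the eigenvalue hypothesis $\l<0$, the Dirichlet eigenproblems \eqref{eq:eigendirper} and \eqref{eq:stationaryDirichlet}, the Harnack inequality, and the Bernstein estimate --- are exactly those of the paper.

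There are, however, three concrete deviations worth flagging. First, the boundary data at $s=-a$ cannot be homogeneous: the paper imposes $(u,v)(-a,x)=(Kp(x),Kq(x))$ with $(p,q)$ the nontrivial steady state from Theorem~\ref{th:steady} and $K$ large. This is essential because at the cooperative endpoint ($\tau=0$) the boundary datum $(Kp,Kq)$ is a strict supersolution, which is what drives the monotone-iteration uniqueness (Lemma~\ref{lem:firsthomotopytauequalzero}) and ultimately the degree computation. With homogeneous Dirichlet data the linear endpoint problem either has only the trivial solution (killed by the normalization) or a non-unique family, and the degree is not cleanly $\pm 1$. Second, the order of limits in the paper is $a\to\infty$ first (keeping the operator uniformly elliptic), \emph{then} $\ep\to 0$; the lower bound on the speed (Lemma~\ref{lem:cnotto0}) is proved only on the whole plane, in the parabolic variables $(t,x)$, via a trichotomy on $\ep_n/c_n^2$ and the Bernstein estimate --- not on the strip.

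Third, and most importantly, your final step ``monotonicity/decay in $s$ obtained from sub/supersolutions yields the weak boundary behaviour'' is precisely what is \emph{not} available here. The paper stresses repeatedly that, because the full system has no comparison principle, one does not know the constructed front is monotone in $s$. The boundary conditions \eqref{weak-boundary} are instead recovered by (i) a forward-in-time lower estimate (Lemma~\ref{lem:forward}) showing that once the solution exceeds $\alpha\Phi^b$ on a large spatial interval it stays above it for all later times, giving $\liminf_{t\to+\infty}>0$; and (ii) a Harnack-inequality argument (Lemma~\ref{thm:tozero}) combined with Lemma~\ref{lem:infpositiveinfnotsosmall} and the smallness of the normalization constant $\nu<\nu^*$ to force $\lim_{t\to-\infty}=0$. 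So the mechanism for the limits is not monotonicity but these quantitative lower/upper trapping lemmas; you should replace the sub/supersolution sentence by this argument.
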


As clear in our construction through the paper, the speed $c^*>0$ of the pulsating front of Theorem \ref{th:pulsating} satisfies the bound
$$
0<c^*\leq \bar c ^0:=\inf\{c\geq 0: \exists \lambda >0, \mu _{c,0}(\lambda)=0\},
$$
where $\mu_{c,0}(\lambda)$ is the first eigenvalue of the operator  
$$
    S_{c, \lambda,0} \Psi:=-\Psi_{xx}+2\lambda\Psi_x+\left[\lambda(c-\lambda)Id-A(x)\right]\Psi
$$
with $ L $-periodic boundary conditions. In previous works on pulsating fronts \cite{Wei-02}, \cite{Ber-Ham-02}, \cite{Ber-Ham-Roq-05-n2}, it is typically proved that $\bar c ^0$ is actually the minimal speed of pulsating fronts (and that faster pulsating fronts $c>\bar c ^0$ also exist).  Nevertheless, those proofs seem to rely deeply on the fact that pulsating fronts, as in Definition \ref{def:pul}, are increasing in time, which is far from obvious in our context without comparison. We conjecture that this remains true but, for the sake of conciseness, we  leave it as an open question.

\medskip

The paper is organized as follows. Section \ref{s:steady-state} is concerned with the proof of Theorem \ref{th:steady} on steady states. In particular the construction of nontrivial steady states requires an adaptation of some  bifurcations results \cite{Rab-71-1, Rab-71}, \cite{Cran-Rab-1971} that are recalled in Appendix, Section \ref{s:topo}. The rest of the paper is devoted to the proof of Theorem \ref{th:pulsating}, that is the construction of a pulsating front. We first consider in Section \ref{s:strip} an $\ep$-regularization of the degenerate problem \eqref{eq-puls} in a strip, where existence of a solution is proved by a Leray-Schauder topological degree argument.  Then, in Section \ref{s:fronts} we let the strip tend to $\R ^2$ and finally let the regularization $\ep$ tend to zero to complete the proof of Theorem \ref{th:pulsating}. This requires, among others,  a generalization to elliptic systems of a Bernstein-type gradient estimate performed in \cite{Ber-Ham-05}, which is proved in Appendix, Section \ref{s:bernstein}.

\section{Steady states}\label{s:steady-state}

This section is devoted to the proof of Theorem \ref{th:steady}. The main difficulty is to prove the existence of a positive steady state to \eqref{syst} when $\l <0$. To do so, we shall use the bifurcation theory introduced in the context of Sturm-Liouville problems by Crandall and Rabinowitz \cite{Cran-Rab-1971}, \cite{Rab-71-1, Rab-71}. Though an equivalent result may be obtained using a topological degree argument, this efficient theory  
shows clearly the relationship between the existence of solutions to the nonlinear problem and the sign of the principal eigenvalue 
of the linearized operator near zero. 

We shall first state and prove an independent theorem that takes advantage of the Krein-Rutman theorem in the context of a bifurcation originating from the 
principal eigenvalue of an operator. 
We will then use this theorem to show the link between the existence of a non-trivial positive steady state for \eqref{syst}, 
and the sign of the principal eigenvalue defined in \eqref{vp-u}.

\subsection{Bifurcation result, a topological preliminary}

We first prove a general  bifurcation theorem, interesting by itself,  which will be used as an end-point of the proof of Theorem \ref{th:steady}. It consists in a refinement of the results in \cite{Cran-Rab-1971}, \cite{Rab-71, Rab-71-1},
 under the additional assumption that the linearized operator satisfies the hypotheses of the Krein-Rutman
Theorem. Our contribution is to show that the set of nontrivial fixed points only \lq\lq meets''  $\mathbb R\times \{ 0 \} $ at point $ (\frac 1{\l(T)}, 0)$, with $ \l (T)$ the principal eigenvalue of the linearized operator $ T$. 

This theorem is independent from the rest of the paper and we will thus use a different set of notations.

\begin{theo}[Bifurcation under Krein-Rutman assumption]\label{theo:orientedbifurcation}
Let $E$ be a Banach space. Let $ C \subset E$ be a closed convex cone with nonempty interior $ Int\,C\neq\varnothing $ and of vertex 0, i.e.
such that $ C\cap -C=\{0\}$. Let 
\begin{equation*}
\begin{array}{rccl}
F:&\mathbb R\times E &\rightarrow& E \\
 & (\alpha, x)&\mapsto& F(\alpha, x)
 \end{array}
\end{equation*}
be a continuous and compact operator, i.e. $F$ maps bounded sets into relatively compact ones. Let us define
\begin{equation*}
\mathcal S:=\overline{\{(\alpha, x)\in \mathbb R\times E\bsl\{0\}:F(\alpha, x)=x\}}
\end{equation*}
the closure of the set of nontrivial fixed points of $ F$, and 
\begin{equation*}
\mathbb P_\mathbb R\mathcal S:=\{\alpha\in\mathbb R: \exists x\in C\bsl \{0\}, (\alpha, x)\in\mathcal S\}
\end{equation*}
the set of nontrivial solutions in $ C $.

 Let us assume the following.
\begin{enumerate}
\item \label{item:0isauniversalsolution} $ \forall \alpha\in\mathbb R$, $ F(\alpha, 0)=0 $. 
\item \label{item:frechetdifferentiability} $ F $ is Fr\'echet differentiable near $ \mathbb R\times \{0\} $ with derivative $ \alpha T $ locally uniformly w.r.t. $ \alpha $, i.e. for any $ \alpha_1< \alpha_2 $ and $ \epsilon>0 $ there exists $ \delta>0 $ such that  
\begin{equation*}
    \forall \alpha\in(\alpha_1, \alpha_2),\;  \Vert x\Vert\leq\delta\Rightarrow \Vert F(\alpha, x)-\alpha Tx\Vert\leq\epsilon \Vert x\Vert.
\end{equation*}
\item \label{item:kr} $ T $ satisfies the hypotheses of  Theorem \ref{theo:krein-rutman} (Krein-Rutman), i.e. $ T(C\bsl \{0\})\subset \mathrm{Int\,}{C}$. We denote by $\l (T)>0 $ its principal eigenvalue. 
\item  \label{item:theobiflocalbound} $ \mathcal S\cap (\{\alpha\}\times C) $ is bounded locally uniformly w.r.t. $ \alpha\in\R $.
\item \label{item:noescapethroughboundary} There  is no fixed point on the boundary of $ C $, i.e. $ \mathcal S\cap \mathbb (\mathbb R\times(\partial C\backslash\{0\}))=\varnothing$. 
\end{enumerate}

Then, either $ \left(-\infty, \frac{1}{\lambda_1(T)}\right)\subset\mathbb P_\mathbb R\mathcal S $ or $ \left(\frac{1}{\lambda_1(T)}, +\infty\right)\subset\mathbb P_\mathbb R\mathcal S$.
\end{theo}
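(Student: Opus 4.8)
The plan is to combine the classical global bifurcation theorem of Rabinowitz with the extra structure coming from the Krein--Rutman hypothesis on $T$. The starting point is the standard observation: since $F(\alpha,0)=0$ for all $\alpha$ and $F$ is Fr\'echet differentiable near $\mathbb R\times\{0\}$ with derivative $\alpha T$, the possible bifurcation points from the trivial branch $\mathbb R\times\{0\}$ are exactly the $\alpha$ for which $\alpha T$ has eigenvalue $1$, i.e. $\alpha\in\{1/\lambda : \lambda\in\mathrm{Spec}(T)\}$. By Rabinowitz's alternative, through each bifurcation point of odd algebraic multiplicity there emanates a connected component $\mathcal C$ of $\mathcal S$ which is either unbounded in $\mathbb R\times E$ or returns to the trivial branch at another bifurcation point. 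I would apply this at the point $\alpha_0:=1/\lambda_1(T)$, noting that the Krein--Rutman theorem guarantees $\lambda_1(T)$ is a simple eigenvalue with a one-dimensional eigenspace spanned by a vector in $\mathrm{Int}\,C$, so the crossing number at $\alpha_0$ is $\pm 1$ and in particular nonzero; hence a genuine global branch $\mathcal C$ bifurcates there.

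The second ingredient is to propagate positivity along this branch. Near $(\alpha_0,0)$ the bifurcating solutions have the form $x=t\,e_1+o(t)$ where $e_1\in\mathrm{Int}\,C$ is the principal eigenvector, so for small $t>0$ we have $x\in\mathrm{Int}\,C$. I would then argue that the subcomponent $\mathcal C^+$ of $\mathcal C$ consisting of solutions with $x\in C\setminus\{0\}$ is both open and closed in $\mathcal C\setminus(\mathbb R\times\{0\})$: closedness is clear since $C$ is closed, and openness uses hypothesis~\ref{item:noescapethroughboundary} — a solution cannot sit on $\partial C\setminus\{0\}$, so a solution in $\mathrm{Int}\,C$ stays in $\mathrm{Int}\,C$ under small perturbation, while the only way to reach $\partial C$ is to pass through $0$, which by hypothesis~\ref{item:frechetdifferentiability} and the simplicity of $\lambda_1(T)$ can only happen at $\alpha=\alpha_0$. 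Connectedness of $\mathcal C$ then forces the whole nontrivial part of $\mathcal C$ to lie in $\mathbb R\times C$, i.e. $\mathcal C\setminus(\mathbb R\times\{0\})\subset\mathcal S\cap(\mathbb R\times C)$, and moreover $\mathcal C$ meets $\mathbb R\times\{0\}$ only at $(\alpha_0,0)$, since any other contact point would be a bifurcation point from the trivial line lying inside $C$, which again is impossible by the eigenvalue characterization unless the eigenvector lies in $C$, but $\lambda_1$ is the only eigenvalue whose eigenvector can lie in $C$ (Krein--Rutman).

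Having shown $\mathcal C$ does not return to the trivial branch, Rabinowitz's alternative forces $\mathcal C$ to be unbounded in $\mathbb R\times E$. Now I invoke hypotheses~\ref{item:theobiflocalbound} and~\ref{item:noescapethroughboundary}: for each fixed $\alpha$, $\mathcal S\cap(\{\alpha\}\times C)$ is bounded, locally uniformly in $\alpha$; since $\mathcal C\setminus(\mathbb R\times\{0\})$ lives in $\mathbb R\times C$, the only way it can be unbounded is by being unbounded in the $\alpha$-direction. So $\mathbb P_{\mathbb R}\mathcal C$ — hence $\mathbb P_{\mathbb R}\mathcal S$ — is an interval (by connectedness) containing $\alpha_0$ and unbounded, so it contains either $(\alpha_0,+\infty)$ or $(-\infty,\alpha_0)$. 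Writing $\alpha_0=1/\lambda_1(T)$ gives the conclusion.

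The main obstacle I anticipate is the bookkeeping that upgrades ``$\mathcal C$ stays in $\mathbb R\times C$'' to ``$\mathcal C$ touches the trivial branch only at $(\alpha_0,0)$'': one must rule out the scenario where the branch wanders back toward the axis at some other $\alpha$. This is exactly where hypothesis~\ref{item:noescapethroughboundary} and the simplicity/uniqueness part of Krein--Rutman do the work — the branch cannot cross $\partial C$, and the only eigenvalue of $T$ admitting an eigenvector in $\overline C$ is $\lambda_1(T)$, so any accumulation onto the axis forces $\alpha=\alpha_0$. Making this rigorous requires a clean local analysis near the trivial branch (a Lyapunov--Schmidt reduction, or simply the observation that $\|F(\alpha,x)-\alpha Tx\|\le\epsilon\|x\|$ prevents a nonzero fixed point near $0$ when $\alpha$ is bounded away from $\mathrm{Spec}(T)^{-1}$), together with a compactness argument using that $F$ maps bounded sets to relatively compact ones, so that limits of fixed points are fixed points. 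A secondary, more technical point is verifying that the topological degree / crossing number at $\alpha_0$ is nonzero; this is standard for a simple eigenvalue but must be stated, since Rabinowitz's theorem needs odd multiplicity at the bifurcation point.
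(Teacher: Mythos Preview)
Your approach is essentially the same as the paper's: invoke Rabinowitz's global bifurcation at the simple eigenvalue $\alpha_0=1/\lambda_1(T)$, use the Krein--Rutman structure together with hypothesis~\ref{item:noescapethroughboundary} to confine the positive sub-branch to $\mathbb R\times C$ and to show it can only meet the trivial line at $(\alpha_0,0)$ (via the normalization/compactness argument you describe), and then conclude unboundedness in $\alpha$ from the alternative plus hypothesis~\ref{item:theobiflocalbound}. One point to tighten: your open/closed argument on $\mathcal C\setminus(\mathbb R\times\{0\})$ does \emph{not} force the whole of $\mathcal C$ into $\mathbb R\times C$ (the $t<0$ part goes into $-C$), so you genuinely need the refined Rabinowitz/Dancer result that each of the two sub-branches $\mathcal C^{\pm}$ separately satisfies the global alternative --- this is exactly the theorem the paper cites (its Theorem~\ref{theo:rabinowitz}) and is the only missing explicit reference in your sketch.
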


\begin{proof} Let us first give a short overview of the proof. Since $ \l $ is a simple eigenvalue, we know from Theorem 
	\ref{theo:krasnoleski-rabinowitz} that there exists a branch of nontrivial solutions originating from 
	$ \left(\frac{1}{\l},0\right)$. We will show that this branch is actually contained in $ \mathbb R\times C $, thanks to 
	Theorem \ref{theo:rabinowitz}. Since it cannot meet $ \mathbb R\times \{0\} $ except at $ \left(\frac{1}{\l}, 0\right)$, it 
	has to be unbounded, which proves our result.

	Let us define 
$$
    \mathcal S_C:=\overline{\{(\alpha, x)\in\mathbb R\times(C\bsl\{0\}):F(\alpha, x)=x\}}
$$
which is a subset of $\mathcal S$, 
and $ \alpha_1:=\frac{1}{\lambda_1(T)} $. We may call $ (\alpha, x)\in\mathcal S_C $ a \textit{degenerate} solution if $ x\in\partial C $, and a \textit{proper} solution otherwise.

Our first task is to show that the only degenerate solution is $ \{(\alpha_1, 0)\} $. We first show  
$ \mathcal S_C\cap(\mathbb R\times \partial C)\subset \left\{(\alpha_1, 0)\right\} $.
Let $ (\alpha, x)\in\mathcal S_C\cap (\mathbb R\times \partial C) $ be given. By item \ref{item:noescapethroughboundary} we must have $ x=0 $.
Let $(\alpha_n, x_n)\to(\alpha, 0) $  such that $x_n\in C \setminus \{0\}$ and $ F(\alpha_n, x_n)=x_n $. 
Let us define $ y_n=\frac{x_n}{\Vert
x_n\Vert}\in C\setminus \{0\}$. On the one hand since $ y_n $ is a bounded sequence and $ T $ is a compact operator, up to an extraction the sequence $ (Ty_n) $ converges to some $ z$ which, by item \ref{item:kr}, must belong to $C$. On the other hand
\begin{equation*}
y_n=\frac{x_n}{\Vert x_n\Vert} = \alpha_nTy_n+\frac{F(\alpha_n,
x_n)-\alpha_nTx_n}{\Vert x_n\Vert}=\alpha z+o(1)
\end{equation*}
in virtue of items \ref{item:0isauniversalsolution} and \ref{item:frechetdifferentiability}, so that in particular $ z\neq 0$ and $\alpha\neq 0$.  Since $ y_n\rightarrow
\alpha z $ and $ Ty_n\rightarrow z $ we have $ z=\alpha
Tz $. Hence $z\in C\setminus \{0\}$ is an eigenvector for $T$ associated with the eigenvalue $\frac 1 \alpha$ so that Theorem \ref{theo:krein-rutman} (Krein-Rutman) enforces $\alpha =\frac 1 {\l (T)}=\alpha _1$.

Next we aim at showing the reverse inclusion, that is $ \left\{(\alpha_1, 0)\right\} \subset \mathcal S_C\cap(\mathbb R\times \partial C)$.
We shall use the topologic results of Appendix \ref{s:topo}, namely Theorem  \ref{theo:krasnoleski-rabinowitz} and Theorem 
\ref{theo:rabinowitz}. Let $ z\in C $ be the eigenvector of $ T $ associated with $ \lambda_1(T) $ such that $ \Vert z\Vert = 1 $, $ T^* $ the dual
of $ T $, and $ l\in E' $ the eigenvector\footnote{Let us recall that according to the Fredholm alternative, we have
$ \dim\ker(I-\lambda T)=\dim\ker(I-\lambda T^*)<\infty $ so that each eigenvalue of $ T $ is an eigenvalue of $ T^* $ with the 
same multiplicity.}  of $ T^* $ associated with $\lambda_1(T) $ such that $ \langle l, z\rangle = 1 $, where 
$ \langle\cdot, \cdot\rangle $ denotes the duality between $ E $ and its dual $ E'$.

Now, for $ \xi>0 $ and $ \eta\in(0, 1) $, let us define
\begin{equation*}
K_{\xi, \eta}^+:=\{(\alpha, x)\in\mathbb R\times
E:|\alpha-\alpha_1|<\xi, \langle l, x\rangle>\eta\Vert x\Vert\}.
\end{equation*}
The above sets are used to study the local properties of $ \mathcal S $ near the branching point $ (\alpha_1, 0)$. More precisely, 
it follows from Theorem \ref{theo:rabinowitz} that $ \mathcal S\bsl\{(\alpha_1, 0)\} $ contains a nontrivial connex
compound $ \mathcal C_{\alpha_1}^+ $ which is included in $ K_{\xi, \eta}^+ $ and 
near $ (\alpha_1, 0) $~:
\begin{equation*}
\forall \xi>0, \forall\eta\in(0, 1), \exists\zeta_0>0,\forall\zeta\in(0, \zeta_0), \, (\mathcal C_{\alpha_1}^+
\cap B_\zeta)\subset K_{\xi, \eta}^+,
\end{equation*}
where
\begin{equation*}
B_\zeta=\{(\alpha, x)\in\mathbb R\times E:|\alpha-\alpha_1|<\zeta,
\Vert x\Vert<\zeta\}.
\end{equation*}
Moreover, $ \mathcal C_{\alpha_1}^+ $ satisfies the alternative in Theorem
\ref{theo:krasnoleski-rabinowitz}. Let us show that $(\mathcal C_{\alpha_1}^+\cap B_\zeta)\subset \R \times C $ for $ \zeta>0 $ small enough, i.e.
\begin{equation}\label{letusprove}
    \exists \zeta>0, (\mathcal C_{\alpha_1}^+\cap B_\zeta)\subset \R\times C.
\end{equation}
To do so, assume by contradiction that  there exists a sequence $ (\alpha ^n, x_n)\to(\alpha_1, 0) $ such that
$$
    \forall n\in\mathbb N, (\alpha^{n}, x_n)\in \mathcal C_{\alpha_1}^+ \text{ and } x_n\notin C.
$$
Writing $ \frac{x_n}{\Vert x_n\Vert}=\alpha^{n}T\frac{x_n}{\Vert x_n\Vert}+\frac{F(\alpha^{n}, x_n)-\alpha^n Tx_n}{\Vert x_n\Vert}$ and reasoning as above, 
we see that (up to extraction) the sequence $\left(\frac{x_n}{\Vert x_n\Vert}\right)$ converges to some $w$ such that $Tw=\frac 1{\alpha _1}w=\l (T) w$. 
As a result $w=z$ or $w=-z$ (recall that $ z $ is the unique eigenvector of $ T $ such that $ z\in C $ and $ \Vert z\Vert = 1$). But the property $ \langle l, x_n\rangle \geq \eta\Vert x_n\Vert $ enforces $ \frac{x_n}{\Vert x_n\Vert}\to z $.
Since $ \frac{x_n}{\Vert x_n\Vert} \not\in C $ and $ z\in Int\,C $, this is a contradiction. Hence \eqref{letusprove} is proved. 

Since $ \mathcal C_{\alpha_1}^+ $ is connected and $ \mathcal C_{\alpha_1}^+\cap (\mathbb R\times\partial C) = \varnothing $ by item \ref{item:noescapethroughboundary}, we deduce from \eqref{letusprove} that $ \mathcal C_{\alpha_1}^+\subset \mathcal S_C $. Moreover, since by definition $ \{(\alpha_1, 0)\}\in \overline{\mathcal C_{\alpha_1}^+} $ and $ \mathcal S_C $ is closed, we have 
$$ 
    \left\{(\alpha_1, 0)\right\} \subset \mathcal S_C\cap(\mathbb R\times \partial C).
$$

We have then established that $ \{(\alpha_1, 0)\} $ is the only degenerate solution in $ C $ i.e. $ \mathcal S_C\cap(\mathbb R\times \partial C) = \{(\alpha_1, 0)\} $. 
Applying Theorem \ref{theo:rabinowitz} near $ \{(\alpha_1, 0)\} $, there exists a branch $ \mathcal C_{\alpha_1}^+ $ of solutions such that 
$ \{(\alpha_1, 0)\} \subset \overline{ \mathcal C_{\alpha_1}^+} $. By the above argument, $ \mathcal C_{\alpha_1}^+\subset \mathcal S_C $. Since
$ \mathcal C_{\alpha_1}^+ $ cannot meet $ \mathbb R\times\{0\} $ at 
$(\alpha, 0)\neq (\alpha_1, 0) $, it follows from Theorem \ref{theo:rabinowitz} that $ \mathcal C_{\alpha_1}^+ $  is unbounded.  It therefore follows from item 
\ref{item:theobiflocalbound} that there exists a sequence $ (\alpha^n, x^n)\in \mathcal C_{\alpha_1}^+ $ with 
$ |\alpha^n|\to\infty $. Since $ \mathcal C_{\alpha_1}^+ $ contains only proper solutions (i.e. $\mathcal C_{\alpha_1}^+\cap(\mathbb R\times \partial C)=\varnothing $), 
the projection $ P_\mathbb R(\mathcal C_{\alpha_1}^+) $ of 
$ \mathcal C_{\alpha_1}^+ $ on $ \mathbb R $ is included in $ \mathbb P_\mathbb R\mathcal S $. Finally, the continuity of the projection $ P_\mathbb R $ and the fact
that $ \mathcal C_{\alpha_1}^+ $ is connected show that either $ (\alpha_1, \alpha^n)\subset P_\mathbb R(\mathcal C_{\alpha_1}^+) $ or $ (\alpha^n, \alpha_1)\subset P_\mathbb R(\mathcal C_{\alpha_1}^+) $, depending on $ \alpha_1\leq \alpha^n $ or $ \alpha^n\leq \alpha_1 $. Letting $ n\to\infty $ proves Theorem \ref{theo:orientedbifurcation}. \end{proof}

\subsection{A priori estimates on steady states}\label{ss:aprioristeady}

In order to meet the hypotheses of Theorem \ref{theo:orientedbifurcation} in subsection \ref{ss:preli-proof}, we prove some \textit{a priori} estimates on stationary 
solutions. We have in mind to apply Theorem \ref{theo:orientedbifurcation} in the cone of nonnegativity of $ \Lp\infty(\mathbb R) $. 
Specifically, Lemma \ref{lem:uniformupperbound} will be used to meet item \ref{item:theobiflocalbound} (the solutions are locally bounded), and Lemma 
\ref{lem:nosolutionintheborder} will be used to meet item \ref{item:noescapethroughboundary} (there is no solution on the boundary of the cone).

\begin{lem}[Uniform upper bound]\label{lem:uniformupperbound}
    There exists a constant $C=C(\maxr, \maxmu, \mingamma) >0 $ such that any nonnegative periodic solution $(p,q)$  to \eqref{eq:systemorig} satisfies
$p(x)\leq C$ and $q(x) \leq C$, for all $x\in \R$.
\end{lem}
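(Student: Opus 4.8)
The plan is to get a pointwise upper bound on the sum $w := p+q$ by exploiting the fact that, although the mutation terms couple the two equations, they cancel when the two equations are added. First I would add the two equations of \eqref{eq:systemorig}: the mutation terms $\mu(x)q-\mu(x)p$ and $\mu(x)p-\mu(x)q$ cancel, leaving
\begin{equation*}
-w'' = r_u(x)p + r_v(x)q - \gamma_u(x)p\,w - \gamma_v(x)q\,w.
\end{equation*}
Using nonnegativity of $p,q$ together with the bounds $r_u,r_v\le\maxr$ and $\gamma_u,\gamma_v\ge\mingamma$, one estimates the right-hand side from above by $\maxr(p+q) - \mingamma(p+q)w = \maxr\,w - \mingamma\,w^2$. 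Hence any nonnegative periodic solution satisfies the differential inequality $-w'' \le w(\maxr - \mingamma w)$.

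Next I would evaluate this inequality at a point $x_0$ where the periodic (hence continuous on a compact period cell) function $w$ attains its maximum. At such an interior maximum $w''(x_0)\le 0$, so $0 \le -w''(x_0) \le w(x_0)\big(\maxr - \mingamma\, w(x_0)\big)$. If $w(x_0)>0$ this forces $\maxr - \mingamma\, w(x_0) \ge 0$, i.e. $w(x_0)\le \maxr/\mingamma$; and if $w(x_0)=0$ then $w\equiv 0$ and the bound is trivial. In all cases $\max_x(p(x)+q(x)) \le \maxr/\mingamma =: C$, and since $p,q\ge 0$ we get $p(x)\le C$ and $q(x)\le C$ for all $x$, with $C$ depending only on $\maxr$ and $\mingamma$ (the stated dependence on $\maxmu$ being harmless, as $\maxmu$ simply does not appear).

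The only genuinely delicate point is the justification that the maximum-principle argument applies: one must know a priori that a nonnegative periodic solution is of class $C^2$ (so that $w''(x_0)\le 0$ is meaningful at the max), which follows from elliptic regularity since the coefficients are smooth, and that the maximum of the continuous $L$-periodic function $w$ is attained. There is no real obstacle here; the argument is short precisely because adding the equations removes the non-cooperative competition coupling from the scalar quantity $w$. I would remark that this is the one place where the symmetry of the mutation term is used only mildly — even a nonsymmetric mutation rate would cancel in the sum — and that the same computation will reappear when controlling solutions of the parabolic and front problems.
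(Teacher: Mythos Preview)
Your proof is correct, but it takes a different route from the paper. The paper treats each equation separately: from $-p''\le p(r_u-\gamma_u p)+q(\mu-\gamma_u p)$ it observes that at a maximum point $x_0$ of $p$, if $p(x_0)>C:=\max(\maxr/\mingamma,\maxmu/\mingamma)$ then both brackets $r_u-\gamma_u p$ and $\mu-\gamma_u p$ are negative, yielding $-p''(x_0)<0$, a contradiction. This is why the paper's constant involves $\maxmu$: it is needed to control the mutation term $q(\mu-\gamma_u p)$ when bounding $p$ alone.

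Your approach of summing the two equations is cleaner here: the mutation terms cancel identically, and you get directly $-w''\le w(\maxr-\mingamma w)$ for $w=p+q$, which yields the sharper bound $C=\maxr/\mingamma$ (or any positive constant if $\maxr\le 0$, since then $w\equiv 0$). This explains your observation that $\maxmu$ need not enter. In fact the paper itself uses exactly your summing trick later, in the proof of Lemma~\ref{lem:firsthomotopyaprioriestimates} item~\ref{item:firsthomotopyuniversalupperbound}, to bound $u+v$ along the first homotopy. So both arguments are standard; yours is slightly more economical for this particular statement, while the paper's equation-by-equation approach would be needed if one wanted separate control on $p$ and $q$ without passing through the sum.
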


\begin{proof}
    Let $ \vectorize{p \\ q} $ be a solution to system \eqref{eq:systemorig}, so that 
\begin{equation} \label{eq:upperbound}
    \left\{\begin{array}{rcl}   -p'' & \leq & p(r_u-\gamma_u p) + q(\mu-\gamma_u p) \\
                                -q'' & \leq & q(r_v-\gamma_v q) + p(\mu-\gamma_v q).
    \end{array}\right.
\end{equation}
Let us define $ C:=\max\left(\frac{\maxr}{\mingamma}, \frac{\maxmu}{\mingamma}\right)>0$. Denote by $x_0$ a point where 
$p$ reaches its maximum, so that $-p''(x_0)\geq 0$. Assume by contradiction that $ p(x_0)> C $. Then, in virtue of \eqref{eq:upperbound}, one has 
$ -p''(x_0)\leq p(x_0)(r_u(x_0)-\gamma_u(x_0) C)< 0$, which is a contradiction. Thus $ p\leq C $. Inequality $q\leq C$ is proved the same way.
\end{proof}

\begin{lem}[Positivity of solutions]\label{lem:nosolutionintheborder}
Any nonnegative periodic solution  $ (p, q)$ to \eqref{eq:systemorig} such that $(p,q)\not\equiv (0,0)$ actually satisfies $p(x)>0$ and $q(x)>0$, for all $x\in \R$.
\end{lem}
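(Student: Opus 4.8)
The plan is to argue via the strong maximum principle for the cooperative, fully coupled elliptic system associated with $-\Delta - A(x)$, which is available thanks to the discussion following \eqref{eq:deflinearizedmatrix}. First I would rewrite \eqref{eq:systemorig} in the form
\begin{equation*}
\left\{\begin{array}{l}
-p'' - (r_u(x)-\mu(x))p - \mu(x)q = -\gamma_u(x)(p+q)p \leq 0,\\
-q'' - (r_v(x)-\mu(x))q - \mu(x)p = -\gamma_v(x)(p+q)q \leq 0,
\end{array}\right.
\end{equation*}
using that $p,q\geq 0$ and $\gamma_u,\gamma_v>0$. Thus $(p,q)$ is a nonnegative supersolution of the linear cooperative system $-\Phi'' - A(x)\Phi = 0$. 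Since the off-diagonal entries of $A$ are $\mu(x)\geq \mu^0 > 0$, this system is fully coupled, so the strong maximum principle of \cite{Bus-Sir04} applies: a nonnegative supersolution is either identically zero in both components or strictly positive in both components.

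More concretely, I would first show that $p$ cannot vanish at an interior point without vanishing identically. Suppose $p(x_0)=0$ for some $x_0$; since $p\geq 0$ attains an interior minimum there, the scalar strong maximum principle applied to $-p'' + \mu(x) p \geq \mu(x) q + (r_u(x)-2\mu(x))p$ — after adding a large constant $K$ to both sides to make the zeroth-order coefficient positive — forces $p\equiv 0$ (here one uses $q\geq 0$ so that the right-hand side is bounded below appropriately, and periodicity so there is no boundary to worry about). Once $p\equiv 0$, the first equation of \eqref{eq:systemorig} reduces to $0 = \mu(x) q$, and since $\mu(x)\geq \mu^0>0$ this gives $q\equiv 0$, contradicting $(p,q)\not\equiv(0,0)$. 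Therefore $p>0$ everywhere, and by the same reasoning with the roles of $p$ and $q$ exchanged, $q>0$ everywhere. Alternatively — and this is the cleaner route I would actually write — one invokes directly the strong maximum principle for fully coupled cooperative systems from \cite{Bus-Sir04}: the nonnegative function $\Phi=(p,q)$ satisfies $-\Phi'' - A(x)\Phi \leq 0$ componentwise (a cooperative supersolution inequality in the sense needed), hence $\Phi$ is either $\equiv 0$ or has both components strictly positive on all of $\R$; the hypothesis $(p,q)\not\equiv(0,0)$ rules out the first case.

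The only mildly delicate point is making sure the sign structure is exploited correctly: one must observe that moving the competition terms $-\gamma_u(p+q)p$ and $-\gamma_v(p+q)q$ to the right-hand side produces nonpositive quantities (because $p,q\geq 0$), so that $(p,q)$ is genuinely a supersolution of the linear cooperative problem rather than merely a solution of a system whose zeroth-order part has the wrong sign. There is no real obstacle here beyond bookkeeping: the periodicity removes any boundary-point subtleties, and the positivity $\mu(x)\geq\mu^0>0$ is exactly what guarantees full coupling, so a zero in one component propagates to the other. I would therefore present the argument in two short lines: supersolution reduction, then one citation of the strong maximum principle for fully coupled cooperative systems.
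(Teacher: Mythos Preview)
Your approach is essentially the paper's: the paper writes the decoupled scalar inequalities $-p'' \geq p(r_u-\mu-\gamma_u(p+q))$ and $-q'' \geq q(r_v-\mu-\gamma_v(p+q))$ (obtained by dropping the nonnegative coupling terms $\mu q$, $\mu p$) and then cites the strong maximum principle, which is exactly your ``concrete'' argument modulo the slightly garbled displayed inequality. One caution about your ``cleaner route'': the inequality $-\Phi''-A(x)\Phi\leq 0$ you derive is the \emph{subsolution} direction, which is the wrong sign for the strong minimum principle to rule out an interior zero of a nonnegative function; if you want to invoke the system-level result directly, absorb the competition terms into the diagonal to get $-\Phi''-\tilde A(x)\Phi=0$ with $\tilde A$ still cooperative and fully coupled, and then the strong maximum principle of \cite{Bus-Sir04} applies cleanly.
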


\begin{proof} Write 
\begin{equation*} 
    \left\{\begin{array}{rcl}   -p'' & \geq & p(r_u-\mu-\gamma_u (p+q))  \\
                                -q'' & \geq & q(r_v-\mu-\gamma_v (p+q)),
    \end{array}\right.
\end{equation*}
   and the result is a direct application of the strong maximum principle.
\end{proof}

\subsection{Proof of the result on steady states}\label{ss:preli-proof}

We are now in the position to prove Theorem \ref{th:steady}.

\begin{proof}[The $\l >0$ case] Let $(p,q)$ be a nonnegative steady state solving \eqref{eq:systemorig}. We need to show that $(p,q)\equiv (0,0)$. Let us recall that $\Phi=\vectorize{\varphi \\ \psi} $ is the principal eigenfunction solving \eqref{vp-u}. From Lemma
\ref{lem:uniformupperbound}, we can define
\begin{equation}
    C_0:=\inf \left\{C\geq 0: \forall x\in \R, \vectorize{p(x) \\ q(x)}\leq C\vectorize{\varphi(x) \\ \psi(x)}\right\}.
\end{equation}
Let us assume by contradiction that $C_0>0$. Hence, without loss of generality, $p-C_0\varphi$ attains a zero maximum value at some point $x_0\in\R$, and $q-C_0 \psi \leq 0$ at this point. But, from \eqref{vp-u} and \eqref{eq:systemorig} we get
\begin{equation*}
\left\{\begin{array}{l}\label{eq:differenceupperestimatewitheigenvalues}
-(p-C_0\varphi)''-(r_u(x)-\mu (x))(p-C_0\varphi)=\mu (x)(q-C_0\psi)-\gamma_u(p+q)p-\l C_0\varphi< 0\vspace{5pt}\\
-(q-C_0\psi)''-(r_v(x)-\mu (x))(q-C_0\psi)=\mu (x)(p-C_0\varphi)-\gamma_v(p+q)q-\l
C_0\psi< 0.
\end{array}\right.
\end{equation*}
Evaluating the first inequality at point $x_0$ yields $(p-C_0\varphi)''(x_0)>0$, which is a contradiction since $ x_0 $ is a local maximum for $p-C_0\varphi$.  As a result $C_0=0$ and $(p,q)\equiv (0,0)$. 
\end{proof}

The reverse situation $\l<0$, where we need to prove the existence of a nontrivial steady state, is more involved. We shall combine our {\it a priori} estimates of 
subsection \ref{ss:aprioristeady} with our bifurcation result, namely Theorem \ref{theo:orientedbifurcation}. We will also use the $ \l >0 $ case. We want to stress 
eventually that we will use the notations introduced in subsection \ref{ss:assumptions}, in particular for functional spaces.

Before starting the proof itself, we would like to present briefly the core of the argument we use. We introduce a new parameter $ \beta\in\mathbb R $  and look at 
the modified system
\begin{equation}\label{eq:systembif}
    \left\{\begin{array}{rcl}   -p'' & = & p(r_u+\beta-\gamma_u(p+q))+\mu(q-p) \\
                                -q'' & = & q(r_v+\beta-\gamma_v(p+q))+\mu(p-q)
\end{array}\right.
\end{equation}
which is system \eqref{eq:systemorig} with $ r_u $ (resp. $ r_v $) replaced by $ r_u+\beta $ (resp. $ r_v+\beta $). We apply Theorem 
\ref{theo:orientedbifurcation} to system \eqref{eq:systembif} with the bifurcation parameter $ \beta $. There exists then a branch of
solutions originating from $ \beta=\l $, and which spans to $ \beta\to+\infty $ since the eigenvalue of the linearization of system \eqref{eq:systembif} is \textit{positive} for 
$\beta<\l $ (i.e. no solution exists for $\beta\in (-\infty, \l) $). In particular there exists a solution for $ \beta=0 $ since $ \l<0 $. Let us make this argument rigorous.

\begin{proof}[The $\l <0$ case] We start with the following lemma.

\begin{lem}[Fr\'echet differentiability]\label{lem:frechetdiff}
Let
\begin{equation*}
f\vectorize{p\\q}:=\left(\begin{matrix}-\gamma_u(p+q)p\\-\gamma_v(p+q)q\end{matrix}\right).
\end{equation*}
Then, the induced operator $ \Lpper\infty(\mathbb R)\longrightarrow\Lpper\infty(\mathbb R) $ is Fr\'echet
differentiable at $\vectorize{0 \\ 0}$ with derivative $0_{\Lp\infty}$.
\end{lem}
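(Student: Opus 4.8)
The plan is to exploit that each component of $f$ is a homogeneous quadratic expression in $(p,q)$, so that $\|f\vectorize{p\\q}\|_{\Lp\infty}$ is controlled by the \emph{square} of the norm of $\vectorize{p\\q}$; since the Fréchet derivative of a map at $\vectorize{0\\0}$ with candidate derivative $0_{\Lp\infty}$ exists precisely when $\|f\vectorize{p\\q}\|_{\Lp\infty}=o\!\left(\left\Vert\vectorize{p\\q}\right\Vert_{\Lp\infty}\right)$ as the argument tends to $0$, this quadratic bound will immediately close the argument.

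First I would check that $f$ really does induce an operator $\Lpper\infty(\R)\to\Lpper\infty(\R)$: if $p,q\in\Lpper\infty(\R)$ then, since $\gamma_u$ is bounded and measurable, $x\mapsto -\gamma_u(x)(p(x)+q(x))p(x)$ lies in $L^\infty$ and is $L$-periodic (being a product of $L$-periodic functions), and likewise for the second component. Next comes the only computation, which is elementary: writing $\rho:=\left\Vert\vectorize{p\\q}\right\Vert_{\Lp\infty}=\max\left(\Vert p\Vert_{L^\infty},\Vert q\Vert_{L^\infty}\right)$, one has $\Vert p\Vert_{L^\infty},\Vert q\Vert_{L^\infty}\leq\rho$, hence for a.e.\ $x$
\[
\left|-\gamma_u(x)(p(x)+q(x))p(x)\right|\leq \maxgamma\bigl(|p(x)|+|q(x)|\bigr)|p(x)|\leq 2\maxgamma\,\rho^2 ,
\]
and the same bound for the second component, so that $\left\Vert f\vectorize{p\\q}\right\Vert_{\Lp\infty}\leq 2\maxgamma\,\rho^2$.

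Since $f\vectorize{0\\0}=\vectorize{0\\0}$, it then remains only to observe that $\left\Vert f\vectorize{p\\q}\right\Vert_{\Lp\infty}\big/\rho\leq 2\maxgamma\,\rho\to 0$ as $\rho\to 0$, which is exactly the definition of Fréchet differentiability at $\vectorize{0\\0}$ with derivative $0_{\Lp\infty}$. I do not anticipate a real obstacle in this lemma; the only points deserving a line of care are the bookkeeping with the product-space norm $\Vert\cdot\Vert_{\Lp\infty}$ and the remark that $f$ preserves periodicity and the $\Lp\infty$ class. (Note that the same estimate in fact shows that $f$ is locally Lipschitz near $\vectorize{0\\0}$, which is more than needed here but is convenient elsewhere in the bifurcation argument.)
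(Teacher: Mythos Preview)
Your proof is correct and follows essentially the same approach as the paper: both establish the quadratic bound $\left\Vert f\vectorize{p\\q}\right\Vert_{\Lp\infty}\leq 2\maxgamma\left\Vert\vectorize{p\\q}\right\Vert_{\Lp\infty}^2$ and conclude directly. Your version is slightly more detailed (checking that $f$ preserves $\Lpper\infty$ and making the product-norm bookkeeping explicit), but the core estimate is identical.
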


\begin{proof}
We need  to show that
$$
\left\Vert f\vectorize{p\\q}\right\Vert_{\Lpper\infty(\mathbb R)}=o\left(\left\Vert \vectorize{p\\q}\right\Vert_{\Lpper\infty(\mathbb R)}\right)
$$
as $ \left\Vert \vectorize{p\\q}\right\Vert_{\Lpper\infty(\mathbb R)} \to 0.
$
We have
\begin{equation*}
    \left\Vert f\vectorize{p\\q}\right\Vert_{\Lpper\infty(\mathbb R)} \leq  \maxgamma\left\Vert\vectorize{p\\q}\right\Vert_{\Lpper\infty(\mathbb R)}\Vert p+q\Vert_{L^\infty_{per}(\mathbb R)}\leq 2\maxgamma\left\Vert\vectorize{p\\q}\right\Vert_{\Lpper\infty(\mathbb R)}^2
\end{equation*}
which proves the lemma.
\end{proof}

We are now in the position to complete the proof of Theorem \ref{th:steady}. It follows from classical theory that, for $ M>0 $ large enough, the  problem
\begin{equation}\label{eq:defop}
	\left\{\begin{array}{l}
			-\vectorize{\tilde p \\ \tilde q}''-A(x) \vectorize{\tilde p \\ \tilde q} + M\vectorize{\tilde p \\ \tilde q} = \vectorize{p \\ q} \\
			\vectorize{\tilde p \\ \tilde q}\in \mathbf H^1_{per} 
		\end{array}\right.
\end{equation}
has a unique weak solution $\vectorize{\tilde p \\ \tilde q}$, for each $ \vectorize{p \\ q}\in\Lpper{2} $. Let us call $ L_M^{-1} $ the associated operator, namely
\begin{equation*}
	\begin{array}{rccl}
		L_M^{-1}:&\Lpper{2} & \rightarrow & \Hqper{1} \\
			 & \vectorize{p \\ q} & \mapsto & \vectorize{\tilde p \\ \tilde q}.
	\end{array}
\end{equation*}
Notice that, assuming $ M > -\lambda_1 $, the principal eigenvalue associated with problem \eqref{eq:defop} is $ \l':=\l+M>0 $, and recall that the actual algebraic eigenvalue  $\l(L_M^{-1}) $  of the operator $ L_M^{-1} $ is given by
$$	\lambda_1(L_M^{-1})=\frac{1}{\l '}>0. $$

From elliptic regularity, the restriction of $
L_M^{-1} $ to $ \Lpper\infty(\mathbb R) $ maps $ \Lpper\infty(\mathbb R) $ into $ \Lippertheta(\mathbb R) $, $0<\theta<1$,
and $ L_M^{-1} $ is therefore a compact
operator on $ \Lpper\infty(\mathbb R)$. Hence, 
\begin{equation*}
\begin{array}{rccl}
    F: & \R \times \Lpper\infty(\mathbb R) & \rightarrow & \Lpper\infty(\mathbb R) \\
       & \vectorize{\alpha, \vectorize{p\\q}} & \mapsto & L_M^{-1}\left(f\vectorize{p\\q}+\alpha\left(\begin{matrix}p\\q\end{matrix}\right)\right)
 \end{array}
 \end{equation*}
 is a continuous and compact map, to which we aim at applying Theorem \ref{theo:orientedbifurcation}. Let us recall that the cone of nonegativity
$$
C:=\left\{\vectorize{p\\q}\in \Lpper\infty(\mathbb R): \vectorize{p\\q}\geq\vectorize{0\\0}\right\}
$$
is, as required by Theorem \ref{theo:orientedbifurcation}, a closed convex cone of vertex $ 0 $ and nonempty interior in $ \Lpper\infty $.
Finally, we want to stress that solutions to $ F\left(\alpha, \vectorize{p \\ q}\right)=\vectorize{p \\ q} $ are classical solutions to the system
\begin{equation}\label{cestfini}
-\vectorize{p \\ q}''-A(x) \vectorize{p \\ q} = f\vectorize{p \\ q} + (\alpha - M) \vectorize{p\\q}
\end{equation}
which is equivalent to system \eqref{eq:systembif} with $\beta=\alpha -M$, where $\alpha$ is the bifurcation parameter. Let us check that all assumptions of Theorem \ref{theo:orientedbifurcation} are satisfied. 
\begin{enumerate}
    \item Clearly we have $ \forall \mathbb \alpha \in \R, F\vectorize{\alpha, \vectorize{0\\0}}=\vectorize{0\\0} $.

\item From Lemma \ref{lem:frechetdiff} and the composition rule for derivatives, $ F $ is Fr\'echet
    differentiable near $ \R \times \left\{\vectorize{0\\0}\right\} $ with derivative $\alpha L_M^{-1}$ locally uniformly w.r.t. $\alpha $. 

\item From the comparison principle (available for $L_M^{-1}$ since $ \l'>0 $, see \cite{Bus-Sir04}), $L_M^{-1} $  satisfies the hypotheses of the Krein-Rutman Theorem, namely $L_M^{-1}(C\setminus \{0\})\subset Int\, C$. 

\item Lemma \ref{lem:uniformupperbound} shows that, for any $ \alpha_*<\alpha^*$, $ \mathcal S\cap
    (\alpha_*, \alpha^*)\times C $ is bounded (in view of system \eqref{eq:systembif}, the constant $ C $ defined in the proof of Lemma \ref{lem:uniformupperbound} is locally bounded w.r.t. $ \alpha $).

\item From Lemma \ref{lem:nosolutionintheborder}, any nonnegative fixed point is positive, i.e. $
\mathcal S\cap\left(\mathbb R\times(\partial C\bsl\{0\})\right)=\varnothing$.
\end{enumerate}

We may now apply Theorem \ref{theo:orientedbifurcation} which
states that either $ \mathcal S\cap(\{\alpha\}\times (C\setminus\{0\}))\neq\varnothing $
for any $ \alpha\in(\l', +\infty)$ or $ \mathcal S\cap(\{\alpha\}\times (C\setminus\{0\}))\neq\varnothing $
for any $ \alpha\in(-\infty, \l')$. Invoking the case of positive principal eigenvalue 
(see the begininning of the present subsection), we see that there is no nonnegative nontrivial fixed points when $\alpha <\l' $. 
As a result we have
$$
    \forall\alpha\in(\l', +\infty),  \mathcal S\cap(\{\alpha\}\times (C\setminus\{0\}))\neq\varnothing .
$$  
In particular, since $ \l'=M+\l<M$,
there exists a positive fixed point for $ \alpha=M$, which is a classical solution of \eqref{cestfini}. This completes the proof of Theorem \ref{th:steady}.
\end{proof}

\section{Towards pulsating fronts: the problem in a strip}\label{s:strip}

We have established above the existence of a nontrivial periodic  steady state $(p(x)>0,q(x)>0)$ when the first eigenvalue of the linearized stationary problem $\l $ is
negative. The rest of the paper is devoted to the construction of a pulsating front, see Definition \ref{def:pul}, when $\l <0$.

In order to circumvent the degeneracy of the elliptic operator in \eqref{eq-puls} we need to introduce a regularization via a small positive parameter $\ep$. Also, in order to gain compactness,  the system \eqref{eq-puls} posed in $(s,x)\in \R^2$ (recall that $ s=x-ct $) is first reduced to a strip $(s,x)\in (-a,a)\times \R$ (recall the periodicity in the $x$ variable). 

More precisely, let us first define the constants $a_0^*>0$ (minimal size of the strip in the $s$ variable on which we impose a normalization), $\nu _0>0$ (maximal normalization), and  $K_0>0$ by
$$
a_0^* := 2\sqrt{\frac{5}{-\l}}, \qquad \nu_0 := \min\left(1, \frac{-\l}{4\maxgamma}, \underset{x\in\mathbb R}{\min}(p(x), q(x))\right),
$$
$$
K_0 := \max\left(\frac{8\maxgamma\max
_{x\in\R}(p(x)+q(x))}{-\l}, 1+\underset{x\in\mathbb R}{\max}\left(\frac{p(x)}{q(x)}, \frac{q(x)}{p(x)}\right)\right).
$$
Also we define the strip $\Omega_0:=(-a_0, a_0)\times\mathbb R$ for $ a_0\geq a_0^* $.

 \begin{thm}[A solution of the regularized problem in a strip]\label{thm:existencestrip}
Assume $ \l<0$. Let $ a_0> a_0^* $, $0<\nu<\nu_0$ and $ K> K_0$ be given.  Then there is $C>0$ such that, for any $ \varepsilon\in(0,1) $, there is $\bar a=\bar a^{\ep}>0$ (whose definition can be found in 
 Lemma \ref{lem:firsthomotopyaprioriestimates} item \ref{item:firsthomotopyupperboundc}) such that: for any  $ a\geq a_0+\bar a $,  there exist a $ L $-periodic in $x$ and positive $(u(s,x),v(s,x))$, bounded by $C$, and a speed $c\in (0, \bar c^{\ep}+\ep)$, solving the following mixed Dirichlet-periodic problem on the domain $ \Omega:=(-a, a)\times \mathbb R $
\begin{equation}\label{eq:pbexistencestrip}
\left\{\begin{array}{l}\begin{array}{rcl}
L_\ep u-cu_s&=&u(r_u-\gamma_u(u+v))+\mu v-\mu u \quad \text{ in } \Omega \vspace{3pt} \\
L_\ep v-cv_s&=&v(r_v-\gamma_v(u+v))+\mu u-\mu v \quad \text{ in } \Omega \vspace{3pt}
\end{array} \\
 (u, v)(-a,x )=(Kp(x), Kq(x)), \quad \forall x \in \R\vspace{3pt}\\
 (u, v)(a, x)=(0, 0), \quad \forall x\in \R\vspace{3pt}\\
\Normalizationleft{u+v}=\nu,
\end{array}\right.
\end{equation}
where  $L_\ep:=-\partial_{xx}-2\partial_{xs}-(1+\ep)\partial_{ss}$ and  the speed $\bar c ^{\ep}\geq 0 $ is defined in 
Lemma \ref{lem:speed}.
\end{thm}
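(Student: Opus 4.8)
The plan is to construct the solution of \eqref{eq:pbexistencestrip} by a Leray--Schauder topological degree argument, in the spirit of \cite{Ber-Ham-02} and \cite{Ber-Ham-Roq-05-n2}, arranged so as never to use a comparison principle for the full system \eqref{syst}. Fix $\ep\in(0,1)$; then $L_\ep$ is uniformly elliptic (its symmetrized symbol $B=\left(\begin{smallmatrix}1&1\\1&1+\ep\end{smallmatrix}\right)$ is positive definite), so on a strip $\Omega$ bounded in $s$ the Dirichlet--periodic boundary value problem obtained by adding a large multiple $M(u+v)$ to both sides of each equation is uniquely solvable and, by Schauder estimates, the solution map is compact on $\Lippertheta(\overline\Omega)$. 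The translation degeneracy inherent to pulsating fronts and the unknown speed are treated as in \cite{Ber-Ham-Roq-05-n2}: one regards $c\in\R$ as an extra unknown, adjoins the scalar constraint $\Normalizationleft{u+v}=\nu$, and works in $\R\times\Lippertheta(\overline\Omega)$, so that a fixed point of the resulting map $G_{\ep,a}$ is exactly a solution of \eqref{eq:pbexistencestrip}.

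Next I would connect $G_{\ep,a}$ through a homotopy $\{G^\tau\}_{\tau\in[0,1]}$ (in fact a short chain of them) to a problem whose fixed point set is explicit and whose degree can be computed --- typically a linear, essentially constant-coefficient problem for which the admissible pairs (speed, profile) reduce to the exponential modes $e^{-\lambda s}\Psi(x)$ governed by the vanishing of the principal eigenvalue of $S_{c,\lambda,0}$ and its $\ep$-analogue (this is also what forces the enlargement of the strip by $\bar a=\bar a^\ep$ and produces the bound $c<\bar c^\ep+\ep$, the content of Lemma \ref{lem:firsthomotopyaprioriestimates}, with $\bar c^\ep$ from Lemma \ref{lem:speed}). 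Along the chain one needs a priori estimates, uniform in $\tau$: (i) positivity $u,v>0$ in the interior, from the strong maximum principle applied to the fully coupled cooperative operator obtained by freezing the bounded reaction coefficients; (ii) an $\Lp\infty$ bound $u,v\leq C$, obtained --- precisely because no comparison principle for \eqref{syst} is available --- by the scalar maximum principle applied to $w:=u+v$, whose equation carries the superlinear damping $-(\gamma_u u+\gamma_v v)w\leq-\mingamma w^2$ once $u,v\geq0$, so that at an interior maximum $w\leq\maxr/\mingamma$, which together with the boundary values $w=K(p+q)$ on $s=-a$ and $w=0$ on $s=a$ yields $C=\max(K\max_x(p+q),\maxr/\mingamma)$, as in Lemma \ref{lem:uniformupperbound}; (iii) the strict lower bound $c>0$: for $c=0$ the normalization $\Normalizationleft{u+v}=\nu$ with $0<\nu<\nu_0$ is incompatible with $\l<0$ on a strip of half-width $a_0\geq a_0^*=2\sqrt{\frac{5}{-\l}}$, via a quantitative comparison of $u+v$ with the principal eigenfunction of $L_\ep-A(x)$ on the cross-section, delicate because $r_u,r_v$ may be negative (this is Lemma \ref{lem:cnotto0}); and (iv) interior gradient and Schauder bounds uniform in $\tau$ (using a Bernstein-type estimate as in \cite{Ber-Ham-05}), guaranteeing that the maps $G^\tau$ are genuinely compact and that the degree is well defined.

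With these estimates, pick the open set $\mathcal O\subset\R\times\Lippertheta(\overline\Omega)$ cut out by $0<c<\bar c^\ep+\ep$, $0<u,v<C+1$ and $\Normalizationleft{u+v}$ in a small neighbourhood of $\nu$. By (i)--(iv) no fixed point of any $G^\tau$ lies on $\partial\mathcal O$, so $\deg(I-G^\tau,\mathcal O,0)$ is independent of $\tau$. At $\tau=0$ the reduced problem has a nonempty solution set described by the exponential modes above, and a direct computation (as in \cite{Ber-Ham-02}) shows the degree there is nonzero; hence it is nonzero at $\tau=1$, which yields a fixed point of $G_{\ep,a}$, i.e. a positive, $L$-periodic in $x$, $C$-bounded solution $(u,v)$ of \eqref{eq:pbexistencestrip} with a speed $c\in(0,\bar c^\ep+\ep)$.

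The main obstacle, and the place where the absence of a comparison principle for \eqref{syst} really bites, will be the a priori estimates (ii) and (iii) and their uniformity along the homotopy. For (ii) one must route the upper bound through the competitive (high-density) structure of the single quantity $u+v$ rather than through a supersolution of the system; for (iii) one must exploit the explicit choices $a_0^*=2\sqrt{\frac{5}{-\l}}$ and $\nu_0=\min\left(1,\frac{-\l}{4\maxgamma},\min_x(p(x),q(x))\right)$ to overrule the fact that the growth rates $r_u,r_v$ are not bounded below by a positive constant --- which is exactly why those constants are defined as they are.
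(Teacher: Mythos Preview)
Your overall architecture --- treat $c$ as an extra unknown, adjoin the normalization, run a Leray--Schauder argument on $\R\times\Lippertheta(\overline\Omega)$ --- matches the paper, and your a priori estimates (i)--(ii) are essentially right. But two points are genuine gaps.

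First, the endpoint of your homotopy is not workable as stated. You say the chain terminates in ``a linear, essentially constant-coefficient problem for which the admissible pairs (speed, profile) reduce to the exponential modes $e^{-\lambda s}\Psi(x)$''. This does not give you a problem whose degree you can compute: the exponential modes do not satisfy the Dirichlet condition at $s=-a$, and a linear problem with nonzero boundary data has a \emph{unique} solution for each $c$, so the normalization picks out one $c$ --- but you still need to know that the resulting one-dimensional map in $c$ has degree $\pm1$, which requires monotonicity in $c$ that you have not established. The paper's key manoeuvre, which you have not identified, is that the \emph{first} homotopy deforms the competitive coupling $-\gamma_u uv$, $-\gamma_v vu$ into the decoupled terms $-\gamma_u u\,\tfrac{q}{K}$, $-\gamma_v v\,\tfrac{p}{K}$. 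At $\tau=0$ the system is genuinely cooperative, the boundary datum $(Kp,Kq)$ is a supersolution (this is where $K>K_0$ is used), and so comparison \emph{is} available there: one gets uniqueness, monotonicity in $s$, and monotonicity in $c$ by sliding. Only then can one enlarge the $c$-window, run a second homotopy to a purely linear problem, decouple $c$ from $(u,v)$, and read off degree $=1$ from the strict monotonicity of $c\mapsto\Normalizationleft{u_c+v_c}$. Without landing on a cooperative problem you have no mechanism to compute the degree.

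Second, two misattributions. The bound $c>0$ in the strip is \emph{not} Lemma~\ref{lem:cnotto0}; that lemma is a much later (and harder) statement about the limit $\ep\to0$. Here, with $\ep>0$ fixed, the lower bound on $c$ is Lemma~\ref{lem:firsthomotopyaprioriestimates} item~\ref{item:firsthomotopysubsolc=0}, which compares $(u,v)$ with the Dirichlet--periodic eigenfunction of $L_\ep-A(x)$ on $\Omega_0$ (Lemma~\ref{lem:lepestimate} gives $\lep\leq\tfrac{3}{4}\l$ when $a_0\geq a_0^*$) and shows that at $c=0$ one has $\Normalizationleft{u+v}\geq 2\nu_0>\nu$. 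Likewise, the Bernstein gradient estimate is irrelevant here: since $\ep\in(0,1)$ is fixed, $L_\ep$ is uniformly elliptic and ordinary Schauder theory already gives the compactness you need. The Bernstein estimate enters only in Section~\ref{s:fronts}, when one lets $\ep\to0$ and the operator degenerates.
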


This whole section is concerned with the proof of Theorem \ref{thm:existencestrip}. In order to use a topological degree argument, we transform continuously our 
problem until we get a simpler problem for which we know how to 
compute the degree explicitely.

Our first homotopy allows us to get rid of the competitive behaviour of the system. Technically we interpolate the nonlinear terms $ -\gamma_uuv $, $ -\gamma_v uv $ with the linear terms $-\gamma_u u \frac qK $, $ -\gamma_vv\frac pK$ respectively, to obtain system \eqref{eq:firsthomotopytauequalzero} which is truly 
cooperative. In particular, since the boundary condition at $ s=-a $ is a supersolution to \eqref{eq:firsthomotopytauequalzero}, we can prove the existence of a unique
solution to \eqref{eq:firsthomotopytauequalzero} for each $ c\in\mathbb R $ via a monotone iteration technique, the monotonicity of the constructed solutions and further properties. Nevertheless we still need to compute the degree explicitely, to which end we use a second homotopy that interpolates the right-hand side of  \eqref{eq:firsthomotopytauequalzero} with a linear term, and then a third homotopy to get rid of the coupling between the speed $ c $ and the profiles $ u $ and $ v $. At this point we are equipped 
to compute the degree. For related arguments in a traveling wave context, we refer teh reader to \cite{Ber-Nad-Per-Ryz}, \cite{Alf-Cov-Rao-13, Alf-Cov-Rao-14}, \cite{Gri-Rao}.

The role of the a priori estimates in  subsections \ref{ss:firsthomotop}, \ref{ss:endfirsthomotop} and \ref{ss:secondhomotop} is to ensure that there is no 
solution on the boundary of the open sets that we choose to contain our problem, and thus that the degree is a constant along our path. In subsection 
\ref{ss:proofexistencestrip}, we complete the proof of Theorem \ref{thm:existencestrip}.

Before that,  we need to establish some properties on the upper bound $\bar c^{\ep}$ for the speed in Theorem \ref{thm:existencestrip}.

\begin{lem}[On the upper bound for the speed]\label{lem:speed}
Let 
$$
    S_{c, \lambda, \varepsilon} \Psi:=-\Psi_{xx}+2\lambda\Psi_x+\left[\lambda(c-(1+\varepsilon)\lambda)Id-A(x)\right]\Psi,
$$
and define
\begin{equation}\label{eq:defminspeedeps}
    \bar c^\varepsilon = \inf\left\{c\geq 0, \exists \lambda > 0, \mu_{c, \varepsilon}(\lambda) = 0\right\},
\end{equation}
where $ \mu_{c, \varepsilon}(\lambda) $ is the first eigenvalue of the operator $S_{c, \lambda, \varepsilon}$ with $ L $-periodic boundary conditions. Then the following holds.
\begin{enumerate}
\item For any $ \varepsilon\in (0,1) $, we have $ \bar c^\varepsilon<+\infty $.
\item We have $ \bar c^\varepsilon=\min\left\{c\geq 0, \exists \lambda > 0, \mu_{c, \varepsilon}(\lambda) = 0\right\} $.
\item $ \varepsilon\mapsto \bar c^\varepsilon $ is nondecreasing.
\end{enumerate}
\end{lem}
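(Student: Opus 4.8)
The plan is to analyze the first eigenvalue $\mu_{c,\varepsilon}(\lambda)$ of the operator $S_{c,\lambda,\varepsilon}$ as a function of the two parameters $c\geq 0$ and $\lambda>0$, and to exploit the fact that it is built from the periodic principal eigenvalue $\lambda_1$ of $-\partial_{xx}-A(x)$. Since $S_{c,\lambda,\varepsilon}\Psi=-\Psi_{xx}+2\lambda\Psi_x+[\lambda(c-(1+\varepsilon)\lambda)\mathrm{Id}-A(x)]\Psi$ differs from a conjugate of $-\partial_{xx}-A(x)$ only by the addition of the scalar $\lambda(c-(1+\varepsilon)\lambda)$, I would first record the key identity: conjugating by $e^{\lambda x}$ (more precisely, writing $\Psi=e^{-\lambda x}\Theta$ with $\Theta$ periodic) shows that the periodic principal eigenvalue of $S_{c,\lambda,\varepsilon}$ equals $\kappa(\lambda,\varepsilon)+\lambda(c-(1+\varepsilon)\lambda)$, where $\kappa(\lambda,\varepsilon)$ is the periodic principal eigenvalue of $-\partial_{xx}+2\lambda\partial_x-\lambda^2\mathrm{Id}+(1+\varepsilon)\lambda^2\mathrm{Id}-A(x)$ minus the scalar shift — the point being that $\mu_{c,\varepsilon}(\lambda)=f(\lambda,\varepsilon)+\lambda c$ where $f$ is a fixed function independent of $c$, concave or at least well-controlled in $\lambda$, with $f(\lambda,\varepsilon)\to$ a finite value as $\lambda\to 0^+$ equal to $\lambda_1<0$ at $\lambda=0$, and $f(\lambda,\varepsilon)\to -\infty$ as $\lambda\to+\infty$ (because of the $-(1+\varepsilon)\lambda^2$ term dominating). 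This structural decomposition, together with monotonicity and continuity of principal eigenvalues in their coefficients (standard Krein–Rutman / Collatz–Wielandt facts for fully coupled cooperative systems, cf. \cite{Bus-Sir04}), is the engine for all three items.

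For item (1), I would fix $\lambda>0$, say $\lambda=1$, and observe that $\mu_{c,\varepsilon}(1)=f(1,\varepsilon)+c$ is a strictly increasing affine function of $c$; since $f(1,\varepsilon)$ is finite, there is a finite value of $c$ at which it vanishes, so $\bar c^\varepsilon\leq c<+\infty$. (One also needs $\bar c^\varepsilon\geq 0$, which is immediate from the definition as an infimum over $c\geq 0$; and one should note the set is nonempty, which is exactly what the above shows.) For item (2), the infimum is attained: I would argue that $(c,\lambda)\mapsto\mu_{c,\varepsilon}(\lambda)$ is continuous, that for $c=\bar c^\varepsilon$ one can extract a sequence $(c_n,\lambda_n)$ with $c_n\downarrow\bar c^\varepsilon$ and $\mu_{c_n,\varepsilon}(\lambda_n)=0$, and that the $\lambda_n$ stay in a compact subset of $(0,+\infty)$ — bounded above because $f(\lambda,\varepsilon)\to-\infty$ forces $\mu_{c,\varepsilon}(\lambda)<0$ for large $\lambda$ when $c$ is near $\bar c^\varepsilon$ (using $\lambda c_n$ bounded), and bounded away from $0$ because as $\lambda\to 0^+$, $\mu_{c,\varepsilon}(\lambda)\to\lambda_1<0$, so it cannot hit $0$ near $\lambda=0$ uniformly. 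Passing to the limit along a convergent subsequence $\lambda_n\to\lambda_\infty\in(0,+\infty)$ gives $\mu_{\bar c^\varepsilon,\varepsilon}(\lambda_\infty)=0$, so the infimum is a minimum.

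For item (3), monotonicity in $\varepsilon$, I would use the comparison of eigenvalues coefficient by coefficient: the only place $\varepsilon$ enters $S_{c,\lambda,\varepsilon}$ is through the term $-(1+\varepsilon)\lambda^2\mathrm{Id}$, so for $\varepsilon_1<\varepsilon_2$ we have $S_{c,\lambda,\varepsilon_2}=S_{c,\lambda,\varepsilon_1}-(\varepsilon_2-\varepsilon_1)\lambda^2\mathrm{Id}$, hence $\mu_{c,\varepsilon_2}(\lambda)=\mu_{c,\varepsilon_1}(\lambda)-(\varepsilon_2-\varepsilon_1)\lambda^2<\mu_{c,\varepsilon_1}(\lambda)$ for every $\lambda>0$ (principal eigenvalue of an operator shifted by a negative scalar decreases by that scalar). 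Thus for any $c$ admissible for $\varepsilon_1$ — i.e. with $\mu_{c,\varepsilon_1}(\lambda)=0$ for some $\lambda>0$ — we get $\mu_{c,\varepsilon_2}(\lambda)<0$ at that same $\lambda$; since $\mu_{c,\varepsilon_2}(\lambda')\to\lambda_1<0$ as $\lambda'\to 0^+$ is also negative, but for $c$ slightly larger $\mu_{c,\varepsilon_2}$ becomes $\geq 0$ somewhere, an intermediate value / continuity argument in $c$ shows the threshold $\bar c^{\varepsilon_2}$ where a zero first appears is $\geq\bar c^{\varepsilon_1}$. Concretely: define $g_\varepsilon(c)=\max_{\lambda>0}\mu_{c,\varepsilon}(\lambda)$ (the max is attained by the compactness discussion above), check that $g_\varepsilon$ is continuous, nondecreasing in $c$ (since $\mu_{c,\varepsilon}(\lambda)$ is increasing in $c$ for each $\lambda$), with $\bar c^\varepsilon=\inf\{c\geq 0: g_\varepsilon(c)\geq 0\}$; then $g_{\varepsilon_2}\leq g_{\varepsilon_1}$ pointwise gives $\bar c^{\varepsilon_2}\geq\bar c^{\varepsilon_1}$ directly.

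The main obstacle I anticipate is making the two-variable dependence rigorous: one must establish continuity (and the relevant monotonicity) of the periodic principal eigenvalue $\mu_{c,\varepsilon}(\lambda)$ jointly in $(c,\lambda,\varepsilon)$ for the cooperative fully coupled system, and control its behavior as $\lambda\to 0^+$ and $\lambda\to+\infty$ — in particular the coercivity that sends $\mu_{c,\varepsilon}(\lambda)\to-\infty$ for large $\lambda$, which is where the competition between the favorable term $\lambda c$ (linear) and the unfavorable term $-(1+\varepsilon)\lambda^2$ (quadratic) must be quantified, and where the lack of a positive lower bound on $r_u,r_v$ is harmless since only upper bounds on $A(x)$ matter for an upper bound on the eigenvalue. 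All of these are by now classical for periodic principal eigenvalues of cooperative fully coupled elliptic systems and can be quoted from \cite{Bus-Sir04}, \cite{Dam-Pac-12}; the variational (Rayleigh quotient) characterization available in the self-adjoint case further simplifies the continuity and monotonicity claims.
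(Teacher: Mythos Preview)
Your overall strategy is sound and close to the paper's, but a few points need correction.

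First, the conjugation step is wrong and should be dropped: writing $\Psi=e^{\pm\lambda x}\Theta$ does \emph{not} preserve $L$-periodicity, so it cannot relate the periodic principal eigenvalue of $S_{c,\lambda,\varepsilon}$ to that of $-\partial_{xx}-A(x)$. Fortunately you do not need it: the decomposition $\mu_{c,\varepsilon}(\lambda)=f(\lambda)+\lambda c-\varepsilon\lambda^2$ (with $f$ independent of $c,\varepsilon$) follows trivially from the fact that $c$ and $\varepsilon$ enter $S_{c,\lambda,\varepsilon}$ only through the scalar $[\lambda c-(1+\varepsilon)\lambda^2]\mathrm{Id}$, and adding a scalar multiple of the identity shifts the principal eigenvalue by that scalar. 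For the same reason, your remark about the Rayleigh quotient is misplaced: for $\lambda>0$ the operator carries the first-order term $2\lambda\partial_x$ and is not self-adjoint, so continuity and monotonicity must come from the general Krein--Rutman theory you already invoke.

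Second, in item (1), fixing $\lambda=1$ only produces an admissible $c\geq 0$ when $f(1,\varepsilon)\leq 0$, which is not guaranteed. You should instead take $\lambda$ large enough that the $-(1+\varepsilon)\lambda^2$ term dominates (this is where you actually use $\varepsilon<1$, or at least $\varepsilon$ bounded); then $f(\lambda,\varepsilon)<0$ and $c:=-f(\lambda,\varepsilon)/\lambda>0$ works.

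With those fixes your argument goes through. The paper follows the same skeleton (continuity of $\mu_{c,\varepsilon}(\lambda)$, the anchor $\mu_{c,\varepsilon}(0)=\lambda_1<0$, and the intermediate value theorem) but, instead of isolating the affine structure, plugs the periodic eigenfunction $\Phi$ of \eqref{vp-u} into $S_{c,\lambda,\varepsilon}$ as a test function and reads off sign information via the characterizations of \cite{Bus-Sir04}: for item (1) it takes $\lambda=c/2$ and shows $\mu_{c,\varepsilon}(c/2)\geq 0$ for $c$ large; for item (2) it shows $\mu_{c,\varepsilon}(\lambda)\leq 0$ for $\lambda$ large (uniformly for bounded $c$), which bounds the minimizing $\lambda_n$; for item (3) it proves the set inclusion $\{c:\exists\lambda,\mu_{c,\varepsilon}(\lambda)=0\}\subset\{c:\exists\lambda',\mu_{c,\varepsilon'}(\lambda')=0\}$ for $\varepsilon'\leq\varepsilon$ directly, using the same IVT with the $\lambda_1<0$ anchor that underlies your $g_\varepsilon$ reformulation. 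Your packaging via the exact identity $\mu_{c,\varepsilon_2}(\lambda)=\mu_{c,\varepsilon_1}(\lambda)-(\varepsilon_2-\varepsilon_1)\lambda^2$ is a bit cleaner than the test-function inequality the paper uses at that step.
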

\begin{proof}
\begin{enumerate}
\item We need to prove that the set in the right-hand side of \eqref{eq:defminspeedeps} is non-empty. We first notice that $ \mu _{c,\ep} (0)=\l<0 $ for any $c>0$. Next, for the eigenfunction $ \Phi := \left(\begin{matrix}\varphi \\ \psi\end{matrix}\right) $ solving \eqref{vp-u}, we have
$
    S_{c, \lambda, \varepsilon}\Phi=\l \Phi +2\lambda\Phi_x+\lambda(c-(1+\varepsilon)\lambda)\Phi$. In particular for $ \lambda=\frac{c}{2} $, we have
    $$
 S_{c,\frac c 2, \varepsilon}\Phi\geq (\l+\frac{c^{2}}{4}(1-\varepsilon))\Phi+c\Phi _x
\geq \left(\begin{matrix}0 \\ 0\end{matrix}\right) 
 $$
 as soon as $c\geq c_*$ where $c_*>0$ depends only on the quantities $\min(\varphi,\psi)$,  $\Vert\Phi_x\Vert_{\Lp\infty}$ and $-\l$. It therefore follows from  \cite[Theorem 13.1, item c]{Bus-Sir04} that $\mu _{c_*,\ep}\left(\frac{c_*}{2}\right)\geq 0$. Since the principal eigenvalue of $ S_{c, \lambda, \varepsilon} $ is 
continuous\footnote{This property is potentially false in general but has a simple proof in our setting. Take a sequence of operators $ T_n\to T $ that send a proper cone $ C $ into $ K\subset \mathrm{Int~}C $ with $ K $ compact, i.e. $ T_n(C)\subset K $ and $ T(C)\subset K $. Assume that the series of normalized eigenvectors $ x_n\in C $ s.t. $ T_nx_n=\lambda_nx_n $ diverges, then we can extract to sequences $ x_n^1\to y\in C $ and $ x_n^2\to z\in C $ with $ y\neq z $. Extracting further, there exists $\mu $ and $ \nu $ s.t. $ Ty=\mu y $ and $ T z=\nu z $ which is a contradiction since $ y\neq z $. Hence the continuity of the eigenvalue.} 
with respect to $ \lambda $ (and $c$), there exists  $ \lambda\in(0, \frac {c_*}2] $ such that $ \mu_{c_*, \varepsilon}(\lambda)=0 $, which proves that \eqref{eq:defminspeedeps} is 
well-posed.

\item  For the eigenfunction $ \Phi $ solving \eqref{vp-u}, we have
$$
S_{c, \lambda, \varepsilon}\Phi\leq 2 \lambda \Phi _x-\lambda^2\left(1+\varepsilon-\frac c \lambda \right)\Phi<\left(\begin{matrix}0 \\ 0\end{matrix}\right) 
$$
 as soon as $\lambda \geq \lambda _*$ where $\lambda _*>0$ depends only on $\min(\varphi,\psi)$,  $\Vert\Phi_x\Vert_{\Lp\infty}$, and an upper bound for $c$. Hence the maximum principle does not hold for $S_{c,\lambda,\ep}$, and it follows from  \cite[Theorem 14.1]{Bus-Sir04} that $\mu _{c,\ep}(\lambda) \leq 0$. \

Now, we consider  sequences $c_n\searrow \bar c ^\ep$, and $\lambda _n\geq 0$ such that $\mu _{c_n,\ep}(\lambda _n)=0$. From the above, we have $\lambda _n\leq \lambda _*$ so that, up to extraction, $\lambda _n\to \lambda _\infty$. From the continuity  of the principal eigenvalue, we deduce that $\mu _{\bar c ^{\ep},\ep}(\lambda _\infty)=0$, and the infimum in \eqref{eq:defminspeedeps} is attained.

\item Let $ \varepsilon'\leq\varepsilon $ and $ c>0 $ such that there is a positive solution $ \Theta  $ to $ S_{c, \lambda, \varepsilon}\Theta =\left(\begin{matrix}0 \\ 0\end{matrix}\right)  $. Then
$
    S_{c, \lambda, \varepsilon'}\Theta =(\varepsilon-\varepsilon')\lambda^2\Theta \geq \left(\begin{matrix}0 \\ 0\end{matrix}\right) 
$
so that, as in the proof of item 1, there exists $ 0<\lambda'\leq \lambda  $ such that $ \mu_{c, \varepsilon'}(\lambda') =0$. Thus
$$
\{c\geq 0, \exists \lambda > 0, \mu_{c, \ep}(\lambda)=0\}\subset\{c\geq 0, \exists\lambda > 0, \mu_{c, \ep'}(\lambda)=0\}.
$$
Taking the infimum on $ c $  yields $ \bar c^{\varepsilon'}\leq \bar c^{\varepsilon} $.
\end{enumerate}

Lemma \ref{lem:speed} is proved.
\end{proof}

\subsection{Estimates along the first homotopy} \label{ss:firsthomotop}

Let us recall that the role of the first homotopy is to get rid of the competition of our original problem ($\tau =1$), so that the classical comparison 
methods become available for $ \tau=0 $. Notice that it is crucial that the Dirichlet condition at $ s=-a $ is a supersolution for the $\tau =0$ problem, in order to apply a sliding 
method in the following subsection. Hence, for $0\leq \tau \leq 1$, we consider the problem
\begin{equation}\label{eq:firsthomotopy}
\left\{\begin{array}{l}
\begin{array}{rcl}
L_\varepsilon u-cu_s&=&u[r_u-\gamma_u(u+(\tau v+(1-\tau)\frac qK))]+\mu v-\mu u \vspace{3pt}\\
L_\varepsilon v-cv_s&=&v[r_v-\gamma_v((\tau u+(1-\tau)\frac pK)+v)]+\mu u-\mu v \vspace{3pt}
\end{array} \\
 (u, v)(-a,x )=(Kp(x), Kq(x)), \quad \forall x\in \R \vspace{3pt}\\
 (u, v)(a, x)=(0, 0), \quad \forall x\in \R,
\end{array}\right.
\end{equation}
along with the normalization condition $\Normalizationleft{u+v}=\nu$.

\begin{lem}[A priori estimates along the first homotopy]\label{lem:firsthomotopyaprioriestimates}
Let a nonnegative $ (u,v)\in\Homotopyspace $ (where $\Omega=(-a,a)\times \R$ and the periodicity is understood only w.r.t. the $x\in\R$ variable) and $c\in \R$ solve \eqref{eq:firsthomotopy}, with $0\leq  \tau\leq 1 $. Then 
\begin{enumerate}
\item $ (u, v ) $ is a classical solution to \eqref{eq:firsthomotopy}, i.e. $ (u,v)\in \mathbf C^{2}(\overline\Omega) $. 
\label{item:firsthomotopyregularity}
\item \label{item:firsthomotopyuniversalupperbound} The positive constant $ C:=\max(\frac{2\maxr}{\mingamma}, K\max(p+q)) $ is such that 
$$
 u(s,x)+v(s,x)\leq C, \quad \forall (s,x)\in \overline \Omega=[-a,a]\times \R.
$$
\item $ (u,v) $ is positive in $ \Omega $. 
\label{item:firsthomotopypositivity}
\item   Let $ \lambda _0>0$ and 
$ \Phi_0(x)=\left(\begin{matrix}\Phi_u(x)\\ \Phi_v(x)\end{matrix}\right)>\left(\begin{matrix}0\\0\end{matrix}\right) $ be such
that $ S_{\bar c^\varepsilon, \lambda _0, \varepsilon}\Phi_0 = 0 $ and $ \Vert \Phi _0\Vert_{\Lpper\infty(\mathbb R)} = 1$.
Define $ \bar a=\bar a ^\ep:=\max(-\frac {1}{\lambda_0}\ln\left(\frac{\nu\min(\Phi_u, \Phi_v)}{4K\max(p, q)}\right),1) $. Then if $ a\geq a_0+\bar a $ and $ c\geq \bar c^\varepsilon $, we have 
$ 
    \Normalizationleft{u+v}<\frac{\nu}{2}$.
\label{item:firsthomotopyupperboundc}
\item  If $ c=0 $ and $a\geq a_0+1$ then 
\begin{equation}\label{eq:conditionalnormalization}
    \Normalizationleft{u+v}\geq \frac{-\lep}{\maxgamma}-\frac{\max(p+q)}{K},
\end{equation}
where $ \lep $ is the principal eigenvalue of the operator $ L_\varepsilon - A(x) $ with Dirichlet condition in $s$ and $L$-periodic condition in $ x $, in the domain $ \Omega_0 $, as defined in \eqref{eq:eigendirper}.
\label{item:firsthomotopysubsolc=0}
\end{enumerate}
\end{lem}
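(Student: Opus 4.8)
The plan is to treat the five items essentially in the order stated, since each one feeds the next. For item \ref{item:firsthomotopyregularity}, the coefficients of $L_\varepsilon$ are constant (hence the operator is uniformly elliptic once $\varepsilon>0$) and the right-hand side is smooth in $(u,v)$ and in $x$; starting from $(u,v)\in\Homotopyspace$ one bootstraps via interior and boundary Schauder estimates, using the smoothness and compatibility of the Dirichlet data $(Kp,Kq)$ and $(0,0)$, to obtain $(u,v)\in\mathbf C^2(\overline\Omega)$. For item \ref{item:firsthomotopyuniversalupperbound}, I would work with $w:=u+v$: adding the two equations, the mutation terms cancel and one gets $L_\varepsilon w-cw_s = r_u u+r_v v-(\gamma_u u+\gamma_v v)w+(1-\tau)(\cdots)$; crucially the convex-combination structure $\tau v+(1-\tau)q/K$ etc. keeps the competition term nonpositive, so $L_\varepsilon w-cw_s\le \maxr\, w-\mingamma\, w^2/2$ roughly (bounding $\gamma_u u+\gamma_v v\ge\mingamma w$ only works on the diagonal, so one should instead note $\gamma_u u^2+\gamma_v v^2\ge \tfrac12\mingamma w^2$). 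At an interior maximum of $w$ the left side is $\le 0$ while the right is $>0$ if $w>2\maxr/\mingamma$; on the boundary $w\le K\max(p+q)$ at $s=-a$ and $w=0$ at $s=a$; since $\Omega$ is an infinite strip one also needs the periodicity in $x$ to locate the maximum, which makes it attained. This gives the stated bound $C$.

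Item \ref{item:firsthomotopypositivity} follows from the strong maximum principle for the cooperative, fully coupled elliptic system: rewriting \eqref{eq:firsthomotopy} as $L_\varepsilon u-cu_s+(\text{bounded})u\ge \mu v$ and symmetrically, the system $-L_\varepsilon-c\partial_s$ minus a bounded matrix with positive off-diagonal entry $\mu(x)\ge\minmu>0$ is fully coupled, so a nonnegative solution that is not identically zero is strictly positive inside $\Omega$; non-triviality is guaranteed by the normalization $\Normalizationleft{u+v}=\nu>0$. Item \ref{item:firsthomotopyupperboundc} is, I expect, the main obstacle and the only genuinely delicate point: the idea is to build a supersolution of the cooperative system \eqref{eq:firsthomotopy} of the form $(\bar u,\bar v)(s,x)=\beta e^{-\lambda_0(s+a)}\Phi_0(x)$ on the sub-strip $s\ge -a$, using that $S_{\bar c^\varepsilon,\lambda_0,\varepsilon}\Phi_0=0$ translates exactly into $L_\varepsilon(\bar u,\bar v)-\bar c^\varepsilon\partial_s(\bar u,\bar v)-A(x)(\bar u,\bar v)=0$; since $c\ge\bar c^\varepsilon$ and $\bar u_s<0$ the term $-cu_s$ only helps, and dropping the nonpositive competition terms the exponential is a supersolution of the cooperative system. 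Choosing $\beta=K\max(p,q)/\min(\Phi_u,\Phi_v)$ makes it dominate the boundary data at $s=-a$, and it dominates the zero data at $s=a$, so by comparison (valid for the cooperative system, \cite{Bus-Sir04}, \cite{Fol-Pol-09}) $u+v\le 2\beta e^{-\lambda_0(s+a)}\le 2\beta e^{-\lambda_0 \bar a}$ on $\Omega_0\subset\{s\ge -a_0\}$ when $a\ge a_0+\bar a$; the definition of $\bar a$ is rigged precisely so that $2\beta e^{-\lambda_0\bar a}<\nu/2$. One must check that $\lambda_0>0$ and $\Phi_0>0$ exist — this is exactly the content of Lemma \ref{lem:speed}, items 1 and 2 (the infimum defining $\bar c^\varepsilon$ is attained).

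Finally, item \ref{item:firsthomotopysubsolc=0} is a subsolution argument in the opposite direction, now with $c=0$. On the strip $\Omega_0=(-a_0,a_0)\times\R$ let $(\underline u,\underline v)=\delta\Psi^\varepsilon$ where $\Psi^\varepsilon>0$ is the principal Dirichlet-periodic eigenfunction of $L_\varepsilon-A(x)$ with eigenvalue $\lep$ (defined in \eqref{eq:eigendirper}); since $\lep<0$ and the competition/saturation terms in \eqref{eq:firsthomotopy} are controlled by $\maxgamma(u+v)+\maxgamma\max(p+q)/K$, one checks that $\delta\Psi^\varepsilon$ is a subsolution of the $c=0$ problem on $\Omega_0$ as long as $\delta\Vert\Psi^\varepsilon\Vert_\infty\maxgamma + \maxgamma\max(p+q)/K \le -\lep$, i.e. up to $\delta$ of size $\big(\tfrac{-\lep}{\maxgamma}-\tfrac{\max(p+q)}{K}\big)/\Vert\Psi^\varepsilon\Vert_\infty$. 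Because the true solution $(u,v)$ is positive inside $\Omega$ (item \ref{item:firsthomotopypositivity}) and vanishes to lower order than — or is comparable to — $\Psi^\varepsilon$ only at $s=\pm a_0$ where we use that $(u,v)>0$ strictly inside and $a\ge a_0+1$ gives room, a sliding/comparison argument pushes $\delta\Psi^\varepsilon$ under $(u,v)$ up to the maximal such $\delta$; evaluating at the normalization sub-strip and using $\Normalizationleft{u+v}\ge \delta\min(\Psi^\varepsilon_u+\Psi^\varepsilon_v)$ after renormalizing $\Psi^\varepsilon$ appropriately yields \eqref{eq:conditionalnormalization}. The delicate point here is again justifying the comparison at $s=\pm a_0$ without a global comparison principle, which is legitimate because the competition terms have been frozen into the cooperative/linear structure used for the subsolution.
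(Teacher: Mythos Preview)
Your plan for items \ref{item:firsthomotopyregularity}--\ref{item:firsthomotopyupperboundc} is essentially the paper's proof. One small remark on item \ref{item:firsthomotopypositivity}: the normalization is not part of the hypotheses of the lemma; non-triviality follows directly from the Dirichlet datum $(u,v)(-a,\cdot)=(Kp,Kq)>0$, which is what the paper uses.

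For item \ref{item:firsthomotopysubsolc=0}, however, the argument as you describe it has a gap. You propose to check that $\delta\Psi^\varepsilon$ is a subsolution of the $c=0$ problem whenever $\delta\|\Psi^\varepsilon\|_\infty\maxgamma+\maxgamma\max(p+q)/K\le-\lep$, then slide $\delta$ up to this threshold and read off $\sup_{\Omega_0}(u+v)\ge \delta_{\max}\cdot(\text{something})$. The difficulty is that for $\tau>0$ system \eqref{eq:firsthomotopy} is \emph{not} cooperative: the term $-\tau\gamma_u u v$ in the $u$-equation means that at a touching point $u(s_0,x_0)=\delta\Psi^\varepsilon_u(s_0,x_0)$ with $v(s_0,x_0)\ge\delta\Psi^\varepsilon_v(s_0,x_0)$, replacing $\delta\Psi^\varepsilon_v$ by the (possibly much larger) actual $v$ makes the right-hand side \emph{smaller}. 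The ``autonomous'' subsolution property of $\delta\Psi^\varepsilon$ therefore does not deliver the differential inequality needed to push the sliding further, and there is no reason the sliding should reach the value $\delta_{\max}$ you compute. Concretely, subtracting the equations at the touching point produces a term $(v-\delta\Psi^\varepsilon_v)(\mu-\tau\gamma_u u)$ whose sign is uncontrolled.

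The paper fixes this by arguing by contradiction: assume $\sup_{\Omega_0}(u+v)<\frac{-\lep}{\maxgamma}-\frac{\max(p+q)}{K}$. This hypothesis bounds the full competition coefficient $\gamma_u\bigl(u+\tau v+(1-\tau)q/K\bigr)$ by $-\lep$ \emph{uniformly on $\Omega_0$}, so that $(u,v)$ becomes a supersolution of the \emph{linear cooperative} system $L_\varepsilon w=(A(x)+\lep\,Id)w$ on $\Omega_0$. Now slide $A\Psi^\varepsilon$ from below: the touching point is interior (since $(u,v)>0$ on $\{s=\pm a_0\}$ while $\Psi^\varepsilon$ vanishes there, using $a\ge a_0+1$ and item \ref{item:firsthomotopypositivity}), and the strong maximum principle forces $u\equiv A_0\Psi^\varepsilon_u$, contradicting those same boundary values. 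The contradiction hypothesis is precisely what linearizes away the non-cooperative coupling and makes the sliding legitimate.
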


\begin{proof}
\begin{enumerate}
\item This is true from classical elliptic regularity. We omit the details.

\item In view of \eqref{eq:firsthomotopy}, the sum $ S:=u+v$ satisfies
$$
\begin{array}{rcl}
L_\varepsilon S-cS_s&=&r_uu+r_vv-\gamma_u u(u+(1-\tau)\frac qK+\tau v)-\gamma_vv(v+(1-\tau)\frac pK+\tau u)\\
                    &\leq & \maxr S-\mingamma(u^2+v^2).
 \end{array}
$$
Since $ S^2=u^2+2uv+v^2\leq 2 (u^2+v^2) $, we have
$$
L_\varepsilon S-cS_s\leq  \frac{\mingamma}{2}S\left(\frac{2\maxr}{\mingamma}-S\right).
$$
Since the maximum principle holds for the operator $L_\varepsilon-c\partial_s $ independently of $ c $ and 
$\varepsilon>0 $, $ S $ cannot have an interior local maximum which is greater than $ \frac{2\maxr}{\mingamma} $. This along with the boundary conditions  $ S(-a,x)=K(p(x)+q(x)) $, $S(a,x)=0$ proves item \ref{item:firsthomotopyuniversalupperbound}.

\item Assume that there exists $ (s_0, x_0)\in (-a, a)\times \mathbb R $ such that $ u(s_0, x_0)=0 $. Since
$$
L_\varepsilon u-cu_s\geq u\left[r_u\left(x\right)-\gamma_u\left(x\right)\left(u+\left(\tau v+\left(1-\tau\right)\frac qK\right)\right)-\mu \left(x\right)\right],
$$
the strong maximum principle enforces $ u\equiv 0 $ which contradicts the boundary condition at $ s=-a $.
The same argument applies to  $ v $.

\item Let $ \zeta(s, x):=Be^{-\lambda _0 s}\Phi _0(x) $, $B>0$. Then we have
$$  
    L_{\varepsilon}\zeta - c \zeta_s = Be^{-\lambda _0 s}\left(S_{\bar c^\varepsilon, \lambda _0, \varepsilon} \Phi _0 + A(x)\Phi _0 + \lambda _0 (c-\bar c^\varepsilon)\Phi _0\right) = A(x) \zeta +\lambda _0(c-\bar c^\varepsilon)\zeta\geq A(x)\zeta
$$ 
so that $ \zeta $ is a strict supersolution to problem \eqref{eq:firsthomotopy}. By item \ref{item:firsthomotopyuniversalupperbound}, one can define
$$
    B_0:=\inf\left\{B>0, \forall (s, x)\in[-a,a]\times \mathbb R, \left(\begin{matrix} u(s,x)\\ v(s,x) \end{matrix}\right)\leq \zeta(s,x) \right\}>0
$$ 
and $ \zeta_0(s,x)=\left(\begin{matrix}\zeta_u(s,x) \\ \zeta_v(s,x)\end{matrix}\right):=B_0e^{-\lambda _0 s}\Phi _0(x) $. From the strong maximum principle in $(-a,a)\times \R$, and the $s=a$ boundary condition, the touching point has to lie on $s=-a$. 
 Thus there exists $ x_0 $ such that either $ \zeta_u(-a, x_0)=u(-a, x_0) $ or $\zeta_v(-a, x_0)=v(-a, x_0) $. In any case one has $B_0 \leq Ke^{-\lambda _0 a}\frac{\max(p, q)}{\min(\Phi _u, \Phi_v)} $, which in in turn implies
$$
    \Normalizationleft{u+v} \leq 2B_0e^{\lambda _0 a_0} \leq 2K\frac{\max(p, q)}{\min(\Phi_u, \Phi_v)}e^{-\lambda _0 (a-a_0)} \leq 2K\frac{\max(p, q)}{\min(\Phi_u, \Phi_v)}e^{-\lambda _0 \bar a}\leq \frac{\nu}{2},
$$
in view of the definition of $\bar a$. This  proves item \ref{item:firsthomotopyupperboundc}.

\item 
Assume by contradiction that
$
    \Normalizationleft{u+v}<\frac{-\lep}{\maxgamma}-\frac{\max(p+q)}{K}
$ (which in particular enforces $\lep <0$).
Then, in $ (-a_0, a_0)\times \R$, we have 
$$
    \left\{\begin{array}{rcl}
    L_\varepsilon u & = & (r_u-\mu -\gamma_u(u+\tau v+(1-\tau)\frac qK))u+\mu  v \geq (r_u-\mu +\lep)u+\mu  v \\
    L_\varepsilon v & = & (r_v-\mu -\gamma_v(v+\tau u+(1-\tau)\frac pK))v+\mu  u \geq (r_v-\mu +\lep)v+\mu  u .
    \end{array}\right.
$$
Denote by $\Phi ^\ep(s,x):=\vectorize{\bar \varphi(s,x)\\ \bar \psi(s,x)}$ the principal eigenvector associated with $\lep$ (vanishing at $s=\pm a_0$, $L$ periodic in $x$) normalized by $ \Vert\bar\Phi ^\ep\Vert_{\Lpper\infty(\mathbb R)}  = 1 $, see problem \eqref{eq:eigendirper}.  Define
$$
A_0:=\max\{A>0:  A\bar \varphi(s,x)\leq u(s,x) \text{ and }  A\bar \psi(s,x)\leq v(s,x), \forall (s,x)\in [-a_0,a_0]\times \R \}.
$$
 Then we have $A_0\bar \varphi\leq u$, $A_0\bar \psi \leq v$, with equality at at least one point for at least one equation, say $A_0\bar \varphi (s_0,x_0)=u(s_0,x_0)$ for some $
-a_0<s_0<a_0$ and $x_0\in \R$. But 
$$
L_\varepsilon (u-A_0\bar \varphi) - (r_u-\mu +\lep)(u-A_0\bar \varphi) \geq  \mu  (v-A_0\bar \psi)\geq 0,
$$
so that the  strong maximum principle enforces $u\equiv A_0\bar \varphi$, which is a contradiction since $ u $ is positive on $ (-a, a)\times \R $ and $ \bar\varphi $ vanishes on $ \{\pm a_0\}\times \R $. A similar argument leads to a contradiction in the case $ v(s_0, x_0)=A_0\bar\psi(s_0, x_0) $. This proves item \ref{item:firsthomotopysubsolc=0}. 
\end{enumerate}
Lemma \ref{lem:firsthomotopyaprioriestimates} is proved.
\end{proof}

Item \ref{item:firsthomotopysubsolc=0} of the above lemma is relevant only  when $ \lep<0 $, which is actually true if $a_0>0$ is large enough, as proved below. Let us denote by $\lep$, $\Phi ^\ep(s,x)$ the principal eigenvalue, eigenfunction solving the  mixed Dirichlet-periodic eigenproblem
\begin{equation}\label{eq:eigendirper}
    \left\{\begin{array}{l}
            L_\varepsilon \Phi^\ep=A(x)\Phi^\ep + \lep \Phi^\ep  \quad \text{ in } \Omega _0=(-a_0,a_0)\times \R\\
            \Phi^\ep(-a_0, x)=\Phi^\ep(a_0, x)=0\quad \forall x \in \R \\
            \Phi^\ep(s,x) \quad \text{ is periodic w.r.t. $x$}\\
            \Phi^\ep>\vectorize{0\\0} \text{ in } \Omega _0=(-a_0,a_0)\times \R.
        \end{array}
    \right.
\end{equation}

\begin{lem}[An estimate for $ \lep $]\label{lem:lepestimate}
 We have 
$\lep\leq\l+\frac{5}{2a_0^2}(1+\varepsilon)$.
\end{lem}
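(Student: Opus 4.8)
The plan is to bound the principal eigenvalue $\lep$ from above by exhibiting a suitable test function and using the Rayleigh quotient characterization. Since the system $L_\varepsilon - A(x)$ is self-adjoint (recall $A(x)$ is symmetric), the mixed Dirichlet-periodic principal eigenvalue $\lep$ is variational: writing $L_\varepsilon = -\partial_{xx} - 2\partial_{xs} - (1+\varepsilon)\partial_{ss}$, the associated quadratic form on $\Phi = \vectorize{\phi \\ \eta}$ is, after integrating by parts over one period in $x$ and over $(-a_0,a_0)$ in $s$,
\begin{equation*}
\mathcal R[\Phi] = \frac{\displaystyle\int \left(|\Phi_x + \Phi_s|^2 + \varepsilon|\Phi_s|^2 - \Phi^\top A(x)\Phi\right)}{\displaystyle\int |\Phi|^2},
\end{equation*}
where I have rewritten the principal part $|\Phi_x|^2 + 2\Phi_x\cdot\Phi_s + (1+\varepsilon)|\Phi_s|^2 = |\Phi_x+\Phi_s|^2 + \varepsilon|\Phi_s|^2$. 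Thus $\lep \le \mathcal R[\Phi]$ for any admissible $\Phi$ (periodic in $x$, vanishing at $s = \pm a_0$).

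The natural choice is to take $\Phi(s,x) = \chi(s)\,\Phi_1(x)$, where $\Phi_1 = \vectorize{\varphi \\ \psi}$ is the periodic principal eigenfunction of $-\partial_{xx} - A(x)$ from \eqref{vp-u} (so $-\Phi_{1,xx} - A\Phi_1 = \l\Phi_1$), and $\chi$ is the first Dirichlet eigenfunction of $-\partial_{ss}$ on $(-a_0,a_0)$, namely $\chi(s) = \cos\!\left(\frac{\pi s}{2a_0}\right)$, with $-\chi'' = \frac{\pi^2}{4a_0^2}\chi$. With this separation of variables the cross term $\int \Phi_x\cdot\Phi_s$ is $\int \chi\chi'\,ds \cdot \int \Phi_1\cdot\Phi_{1,x}\,dx$; the first factor $\int_{-a_0}^{a_0}\chi\chi' = \frac12[\chi^2]_{-a_0}^{a_0} = 0$ vanishes because $\chi$ vanishes at the endpoints. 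So $\mathcal R[\Phi]$ splits cleanly: the $|\Phi_x|^2 - \Phi^\top A\Phi$ part contributes $\l$ (by the eigenfunction identity for $\Phi_1$, after dividing by $\int\chi^2\int|\Phi_1|^2$), and the $(1+\varepsilon)|\Phi_s|^2$ part contributes $(1+\varepsilon)\frac{\int(\chi')^2}{\int\chi^2} = (1+\varepsilon)\frac{\pi^2}{4a_0^2}$. Hence $\lep \le \l + (1+\varepsilon)\frac{\pi^2}{4a_0^2}$, and since $\pi^2/4 < 5/2$ this gives the claimed bound $\lep \le \l + \frac{5}{2a_0^2}(1+\varepsilon)$.

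There is essentially no hard step here; the only point requiring a little care is justifying the variational characterization of $\lep$ for the cooperative self-adjoint system $L_\varepsilon - A(x)$ with mixed boundary conditions — but this is exactly the content of \cite[(2.14)]{Dam-Pac-12} cited in Section \ref{ss:assumptions}, which guarantees the principal eigenfunction minimizes the Rayleigh quotient. Alternatively, one can avoid invoking variational theory altogether and argue by the maximum-principle characterization: set $\Phi(s,x) = \cos\!\left(\frac{\pi s}{2a_0}\right)\Phi_1(x)$ and compute directly that $L_\varepsilon\Phi - A(x)\Phi = \left(\l + (1+\varepsilon)\frac{\pi^2}{4a_0^2}\right)\Phi$ plus a cross term $-2\partial_{xs}\Phi = -2\chi'(s)\Phi_{1,x}(x)$ which does not have a sign, so a pure pointwise comparison needs a small fix (e.g. a further factor $e^{\lambda s}$ to absorb the cross term, at the cost of a slightly larger constant, which the slack between $\pi^2/4$ and $5/2$ comfortably accommodates). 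I would present the variational argument as the clean route and note this alternative only if the paper prefers to stay self-contained.
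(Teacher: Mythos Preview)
Your proof is correct and follows essentially the same route as the paper: both use the Rayleigh quotient (available since $A(x)$ is symmetric) with a separated test function $\chi(s)\Phi_1(x)$, observe that the cross term $\int \chi\chi'\,ds$ vanishes, and reduce to $\l$ plus $(1+\varepsilon)\int(\chi')^2/\int\chi^2$. The only difference is cosmetic: the paper takes the quadratic profile $\chi(s)\propto(a_0^2-s^2)$, which gives the constant $5/2$ exactly, whereas your choice $\chi(s)=\cos(\pi s/(2a_0))$ is the optimal Dirichlet eigenfunction and yields the slightly sharper $\pi^2/4<5/2$.
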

\begin{proof}
Since the matrix $A(x)$  is symmetric, we are equipped with the Rayleigh
quotient 
$$
\lep=\inf_{w\in H^1_{0,per}\times H^1_{0,per}}\frac{\int_{(-a_0, a_0)\times (0,L)}\left(\,^tw_xw_x+2\,^tw_xw_{s}+(1+\varepsilon)\,^tw_sw_{s}-\,^twA(x)w\right)\,\mathrm ds\mathrm dx}{\int_{(-a_0, a_0)\times (0,L)}
\,^tww\,\mathrm ds\mathrm dx}.
$$
Let us denote $ \Phi(x)=\left(\begin{matrix}\,\varphi(x) \\
\psi(x)\end{matrix}\right) $ the  principal eigenvector solving \eqref{vp-u}, and 
define $$ 
\bar\Phi:=\Vert\Phi\Vert_{\Lpper 2}^{-1}\Phi.
$$
We define the test function $ w(s, x):=\eta(s)\bar\Phi(x)$, with $\eta(s):=\sqrt{\frac{15}{16a_0^5}}(a_0-s)(a_0+s)$, so that $\int _{(-a_0,a_0)} \eta  ^{2}(s)ds=1$.
Noticing that $\int \,^tw_xw_{s}\mathrm dx\mathrm ds=0$,
we  get
$$
\lep\leq
\int_{(0,L)} (\,^t\bar\Phi_x\bar\Phi_x-\,^t\bar\Phi A(x)\bar\Phi)(x)\mathrm dx + \int_{(-a_0, a_0)}(1+\varepsilon)\eta_s^2(s)\mathrm ds=\l+\frac{5}{2a_0^2}(1+\varepsilon),
$$
which  shows the result.
\end{proof}

\begin{rem}[Consistency of the choice of parameters in Theorem \ref{thm:existencestrip}]\label{rem:consis} Let us say a word on the choice of the positive parameters  ($a_0^*$, $\nu_0$, $K_0$) in Theorem \ref{thm:existencestrip}. First, the choice of $a_0^*$ and Lemma \ref{lem:lepestimate} imply that $\lep \leq \frac{3\l}{4}$ for any $\ep \in(0,1)$ and $ a_0\geq a_0^* $. Then, \eqref{eq:conditionalnormalization} and the choices of $K_0$, $\nu _0$ 
imply that, for $c=0$,  
\[ 
\Normalizationleft{u+v}\geq
\frac{-\l}{2\maxgamma} \geq 2\nu_0.
\]
 In particular, item \ref{item:firsthomotopysubsolc=0} in Lemma \ref{lem:firsthomotopyaprioriestimates} gives a true lower bound for $ \Normalizationleft{u+v} $ in the case $ c=0 $.
\end{rem}

\subsection{Estimates for the end-point $\tau =0$ of the first homotopy}\label{ss:endfirsthomotop}

We introduce the problem
\begin{equation}\label{eq:firsthomotopytauequalzero}
\left\{\begin{array}{l}
\begin{array}{rcl}
L_\varepsilon u-cu_s&=&u(r_u-\gamma_u(u+\frac qK))+\mu v-\mu u \vspace{3pt}\\
L_\varepsilon v-cv_s&=&v(r_v-\gamma_v(\frac pK+v))+\mu u-\mu v\vspace{3pt} 
\end{array} \\
(u, v)(-a,x )=(Kp(x), Kq(x)), \quad \forall x\in \R \vspace{3pt}\\
 (u, v)(a, x)=(0, 0),\quad \forall x\in \R, 
\end{array}\right.
\end{equation}
which corresponds to \eqref{eq:firsthomotopy} with $ \tau=0 $ and for which comparison methods are available. In this subsection we derive refined estimates for \eqref{eq:firsthomotopytauequalzero} that will allow us 
to enlarge  the domain on which the degree is computed, which is necessary for the second homotopy that we will perform.

\begin{lem}[On problem \eqref{eq:firsthomotopytauequalzero}]\label{lem:firsthomotopytauequalzero}
\begin{enumerate}
    \item For each $ c\in\mathbb R $, there exists a unique nonnegative solution $(u,v)$ to \eqref{eq:firsthomotopytauequalzero}, which satisfies
\begin{equation}\label{bornes}
    \forall (s, x)\in\Omega, \quad 0< u(s, x)<Kp(x)~~\mathrm{and}~~0<v(s, x)<Kq(x).
\end{equation}
\item Let $ c\in\mathbb R $ and $(u,v)$ the nonnegative solution to \eqref{eq:firsthomotopytauequalzero}. Then $ u $ and $ v $ are nonincreasing in $ s $.
\item The mapping $ c\mapsto \vectorize{u\\v} $ is decreasing, where $ (u, v) $ is the unique nonnegative solution to  \eqref{eq:firsthomotopytauequalzero}.
\end{enumerate}
\end{lem}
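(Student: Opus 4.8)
The plan is to exploit the fact that system \eqref{eq:firsthomotopytauequalzero} is genuinely cooperative: the right-hand side depends on the off-diagonal unknown only through the linear mutation term $\mu v$ (resp. $\mu u$), and the ``competition'' is now against the \emph{fixed} functions $q/K$ and $p/K$. The strategy has three parts mirroring the three items.

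\textbf{Item 1 (existence, uniqueness, bounds).} First I would observe that $(\overline u,\overline v):=(Kp,Kq)$ is a supersolution of \eqref{eq:firsthomotopytauequalzero}. Indeed, plugging $\overline u=Kp$ into the $u$-equation, using that $(p,q)$ solves the steady-state system \eqref{eq:systemorig}, one computes
$$
L_\ep \overline u - c\overline u_s - \Big[\overline u\big(r_u-\gamma_u(\overline u+\tfrac qK)\big)+\mu \overline v-\mu \overline u\Big]
= K\gamma_u p\,(Kp+\tfrac qK)-K\gamma_u p(p+q)\geq 0
$$
after simplification using $-p''=(r_u-\gamma_u(p+q))p+\mu q-\mu p$ and $K>1$ (so $Kp+q/K\ge p+q$, using also $K>K_0\ge 1+\max(p/q,q/p)$ which gives $q/K\le p$ hence the inequality); similarly for $v$. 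Since $L_\ep-c\partial_s$ has no zeroth-order term on the diagonal beyond what is controlled, the relevant cooperative operator (after subtracting a large multiple $M$ of the identity) admits a comparison principle, so a monotone iteration scheme starting from the supersolution $(Kp,Kq)$ and the subsolution $(0,0)$ produces a solution $(u,v)$ with $0\le u\le Kp$, $0\le v\le Kq$. Uniqueness of the \emph{nonnegative} solution follows from a sub/supersolution sweeping argument: if $(u_1,v_1)$, $(u_2,v_2)$ are two nonnegative solutions, both lie below $(Kp,Kq)$, and a standard argument (slide one under the other using the strong maximum principle for the fully coupled cooperative operator, and the fact that the equations are monotone in the Dirichlet data which coincide) forces equality. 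Finally, strict positivity $u>0$, $v>0$ in $\Omega$ and the strict inequalities $u<Kp$, $v<Kq$ come from the strong maximum principle applied to $Kp-u$ and $Kq-v$ (which satisfy cooperative differential inequalities and do not vanish identically because they vanish on $\{s=a\}$ but equal $0$ on $\{s=-a\}$ is impossible to propagate), together with the boundary conditions.

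\textbf{Item 2 (monotonicity in $s$).} Here I would use a sliding argument in the $s$-direction. For $h>0$ set $u^h(s,x):=u(s+h,x)$ on the shifted strip and compare $(u^h,v^h)$ with $(u,v)$: on the common domain $u^h$ satisfies the same equation, at $s=a-h$ we have $u^h=u(a,\cdot)=0\le u$ there if — more carefully — one compares on $(-a,a-h)\times\R$ where at the left edge $s=-a$ one needs $u^h(-a,x)=u(-a+h,x)\le u(-a,x)=Kp(x)$, which is exactly the already-proven bound $u<Kp$. Then at both ends of the strip $(u^h,v^h)\le(u,v)$, and the comparison principle for the cooperative operator $L_\ep-c\partial_s-A(x)+(\text{lower order})$ gives $u^h\le u$, $v^h\le v$ throughout, i.e. $u,v$ are nonincreasing in $s$. (One should be slightly careful that the comparison principle applies: subtract a large constant $M$ and use that the system is fully coupled with $\mu\ge\mu^0>0$, as recalled after \eqref{eq:deflinearizedmatrix}.)

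\textbf{Item 3 (strict monotonicity in $c$).} For $c_1<c_2$ let $(u_i,v_i)$ be the corresponding solutions. Since by Item 2 each $u_i$ is nonincreasing in $s$, we have $\partial_s u_i\le 0$, hence $-c_2\partial_s u_1=-c_1\partial_s u_1-(c_2-c_1)\partial_s u_1\ge -c_1\partial_s u_1$, which shows $(u_1,v_1)$ is a supersolution of the $c_2$-problem; as it also dominates on the boundary (same Dirichlet data), the comparison principle yields $u_2\le u_1$, $v_2\le v_1$. Strictness then follows from the strong maximum principle applied to $u_1-u_2$ (which satisfies a cooperative inequality and cannot vanish identically, since it would force $(c_2-c_1)\partial_s u_1\equiv 0$ while $u_1$ is strictly decreasing in $s$ — itself a consequence of strict sliding in Item 2 via the strong maximum principle).

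\textbf{Main obstacle.} The delicate point is the \emph{uniqueness} in Item 1 and the \emph{strictness} of the sliding in Items 2–3: because the nonlinearity $-\gamma_u u(u+q/K)$ is not monotone decreasing in $u$ once $u$ is large, the comparison principle is not automatic and must be applied on the cooperative part after shifting by a large $M$, using the a priori bound $u<Kp$, $v<Kq$ to stay in the region where the relevant monotonicity holds. Carefully checking that the touching point of a sliding argument cannot lie on $\{s=-a\}$ (it can, a priori) and cannot lie on $\{s=a\}$, and handling it via the strict boundary inequalities $u<Kp$, $v<Kq$, is where the real work lies; everything else is a routine application of the maximum principle for fully coupled cooperative elliptic systems.
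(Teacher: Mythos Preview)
Your overall plan is the paper's plan: exploit that \eqref{eq:firsthomotopytauequalzero} is cooperative, trap solutions between $(0,0)$ and $(Kp,Kq)$, and use sliding. Two points deserve attention.

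\medskip
\textbf{Uniqueness in Item 1 is the real gap.} Your description --- ``slide one under the other using the strong maximum principle \ldots\ and the fact that the equations are monotone in the Dirichlet data which coincide'' --- does not name a mechanism. Two distinct solutions share boundary data, so there is no spatial sliding available, and ``monotone in the Dirichlet data'' says nothing about uniqueness for a nonlinear problem. The paper's device is specific to the quadratic competition: for $0<\zeta<1$ the pair $(\zeta u,\zeta v)$ is a \emph{strict} subsolution of \eqref{eq:firsthomotopytauequalzero}, because
\[
L_\ep(\zeta u)-c(\zeta u)_s=(\zeta u)\big(r_u-\gamma_u(u+\tfrac qK)\big)+\mu(\zeta v)-\mu(\zeta u)
<(\zeta u)\big(r_u-\gamma_u(\zeta u+\tfrac qK)\big)+\mu(\zeta v)-\mu(\zeta u).
\]
One then slides in the scaling parameter: set $\zeta_0:=\sup\{\zeta>0:\ (\zeta u,\zeta v)<(\tilde u,\tilde v)\}$, use Hopf at $s=a$ to see $\zeta_0>0$, and if $\zeta_0<1$ obtain an interior touching point where the strong maximum principle (after adding $M\,Id$) forces $\zeta u\equiv \tilde u$, contradicting the boundary at $s=-a$. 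This concavity/sublinearity trick is the missing idea in your sketch.

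\medskip
\textbf{Items 2--3: your one-shot ``comparison principle'' is not justified; the paper slides to a touching point.} In Item 2 you correctly check the boundary order on $(-a,a-h)\times\R$, but then invoke a comparison principle for two \emph{solutions} of the nonlinear system. The linearized system satisfied by the difference has zeroth-order coefficient $r_u-\mu-\gamma_u(q/K+u+u^h)$, whose sign is uncontrolled, so the maximum principle need not hold globally. The paper instead starts from $t$ close to $2a$ (where $u^t<u$ is clear), defines $t_0:=\inf\{t>0:\ u^t\le u,\ v^t\le v\}$, and at $t_0$ uses the order $u^{t_0}\le u$ together with the $M$-shift to run the strong maximum principle at the contact point. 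Your ``Main obstacle'' paragraph shows you are aware of this; it just needs to replace the direct comparison in your Item 2.

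For Item 3 your route --- observe that $(u_1,v_1)$ is a supersolution of the $c_2$-problem because $(c_2-c_1)\partial_s u_1\le 0$, then compare --- is a legitimate alternative to the paper's sliding of $\tilde u^t$ against $u$. Once uniqueness is in hand it becomes rigorous: $(u_1,v_1)$ is a supersolution and $(0,0)$ a subsolution of the $c_2$-problem, monotone iteration yields a solution between them, and by uniqueness that solution is $(u_2,v_2)$; strictness follows from the strong maximum principle. So this part is fine provided Item 1 is fixed.
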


\begin{proof}
In this proof we denote 
\begin{equation}\label{eq:prooftauequalzerononlinearfunction}
    f:\left(x,\left(\begin{matrix}u \\ v\end{matrix}\right)\right)\mapsto\left(\begin{matrix}u(r_u(x)-\gamma_u(x)(u+\frac qK))+\mu (x)v-\mu (x)u \\
v(r_v(x)-\gamma_v(x)(\frac pK+v))+\mu (x)u-\mu (x)v \end{matrix}\right)
\end{equation}
so that \eqref{eq:firsthomotopytauequalzero} is recast $L_\ep \left(\begin{matrix}u \\ v\end{matrix}\right)-c\left(\begin{matrix}u \\ v\end{matrix}\right)_s=f\left(x,\left(\begin{matrix}u \\ v\end{matrix}\right)\right)$. We select $M>0$ large enough so that $f(x,\cdot)+M Id$ is uniformly nondecreasing on $[0,C]^{2}$, with $C$ the constant from Lemma \ref{lem:firsthomotopyaprioriestimates}, that is
$$
\left(\begin{matrix}0 \\ 0\end{matrix}\right) \leq \left(\begin{matrix}u_1 \\ v_1\end{matrix}\right) \leq \left(\begin{matrix}u_2 \\ v_2\end{matrix}\right) \leq \left(\begin{matrix}C \\C\end{matrix}\right)\Rightarrow f\left(x,\left(\begin{matrix}u_2 \\ v_2\end{matrix}\right)\right)-f\left(x,\left(\begin{matrix}u_1 \\ v_1\end{matrix}\right)\right)\geq -M \left(\begin{matrix}u_2-u_1 \\ v_2-v_1\end{matrix}\right),
$$
for all $x\in \R$.

\begin{enumerate}
\item We first claim that $(s,x)\mapsto (Kp(x), Kq(x)) $ is a strict supersolution to problem \eqref{eq:firsthomotopytauequalzero}. Since $K\geq K_0$, we have $p+q<Kp\leq Kp+\frac q K$ so that
$$
\begin{array}{l}
\begin{array}{rcl}
L_\varepsilon (Kp)-c(Kp)_s&=&-(Kp)''\\
&=&(Kp)(r_u(x)-\gamma_u(x)(p+ q))+\mu (x)Kq-\mu (x)Kp  \\
    & >& (Kp)(r_u(x)-\gamma_u(x)(Kp+ \frac qK))+\mu (x)(Kq)-\mu (x)(Kp),
    \end{array}
    \end{array}
    $$
   and similarly 
$$
\begin{array}{l}
\begin{array}{rcl}
L_\varepsilon (Kq)-c(Kq)_s
    & > & (Kq)(r_v(x)-\gamma_v(x)(\frac pK+Kq))+\mu (x)(Kp)-\mu (x)(Kq),
\end{array} 
\end{array}
$$
which proves the claim. Obviously, $ (s,x)\mapsto \vectorize{0\\0} $ is a strict subsolution to problem \eqref{eq:firsthomotopytauequalzero} because of the boundary condition at $ s=-a $. Since system \eqref{eq:firsthomotopytauequalzero} is cooperative,
the classical monotone iteration method shows that, for any $c\in \R$,  there exists at least a solution $(u,v)$ 
to problem \eqref{eq:firsthomotopytauequalzero} which  satisfies \eqref{bornes}.

Next, in order to prove uniqueness, let $(u,v)$ and $ (\tilde u, \tilde v) $ be two nonnegative solutions to \eqref{eq:firsthomotopytauequalzero},
such that $ (u, v)\neq(\tilde u, \tilde v) $. Then, for any $0< \zeta <1$, $(U^\zeta, V^\zeta):=(\zeta u, \zeta v) $ satisfies
$$
    \left\{\begin{array}{l}\begin{array}{rcl}
    L_\varepsilon U^\zeta - cU^\zeta_s&=&U^\zeta(r_u-\gamma_u\frac qK-\mu -\frac {\gamma_u(x)}{\zeta}U^\zeta)+\mu (x)V^\zeta  \\
                                      &<&   U^\zeta(r_u-\gamma_u\frac qK-\mu -\gamma_u(x)U^\zeta)+\mu (x)V^\zeta           \\
    L_\varepsilon V^\zeta - cV^\zeta_s&=&V^\zeta(r_v-\gamma_v\frac pK-\mu -\frac {\gamma_v(x)}{\zeta}V^\zeta)+\mu (x)U^\zeta  \\
                                      &<&   V^\zeta(r_v-\gamma_v\frac pK-\mu -\gamma_v(x)V^\zeta)+\mu (x)U^\zeta
    \end{array}\\
    (U^\zeta, V^\zeta)(-a,x)=(\zeta Kp(x), \zeta Kq(x))\leq (Kp(x), Kq(x)) \\
    (U^\zeta, V^\zeta)(a,x)=(0, 0),
    \end{array}
    \right.
$$
and is therefore a strict subsolution to problem \eqref{eq:firsthomotopytauequalzero}. From Hopf lemma
we know that $ (\tilde u_s, \tilde v_s)(a, x) < (0, 0) $ so that we can define 
$$
    \zeta_0:=\sup \{\zeta>0: (U^\zeta, V^\zeta)(s,x)< (\tilde u, \tilde v)(s,x), \forall (s,x) \in \Omega\}>0.
$$ 
Then we have $ (0,0)\leq (U^{\zeta_0}, V^{\zeta_0})\leq (\tilde u, \tilde v)\leq (C, C) $. Assume by contradiction that $ \zeta_0<1 $. Then we have 
$$
    \left\{\begin{array}{l}\begin{array}{rcl}
    L_\varepsilon (\tilde u-U^{\zeta_0})-c(\tilde u-U^{\zeta_0})_s+M(\tilde u-U^{\zeta_0})&\geq& 0 \\
    L_\varepsilon (\tilde v-V^{\zeta_0})-c(\tilde v-V^{\zeta_0})_s+M(\tilde v-V^{\zeta_0})&\geq& 0 \\
    \end{array} \\
    (\tilde u-U^{\zeta_0},\tilde v-V^{\zeta_0})(-a,x)\geq (0, 0) \\
    (\tilde u-U^{\zeta_0},\tilde v-V^{\zeta_0})(a,x)= (0, 0).
    \end{array}
    \right.
$$
From Hopf lemma we deduce
$$
    ((\tilde u-U^{\zeta_0})_s,(\tilde v-V^{\zeta_0})_s)(a,x) < (0, 0)
$$
so that there exists $ (s_0, x_0)\in(-a, a)\times\mathbb R $ such that, say, $ \tilde u(s_0,x_0)=U^{\zeta_0}(s_0, x_0) $. From the strong maximum principle we deduce  
$  \tilde u\equiv U^{\zeta_0} $, which is a contradiction in view of the boundary condition at $s=-a$.
We conclude that $ \zeta_0\geq 1 $ and thus 
$ (u, v)\leq (\tilde u, \tilde v) $. Then exchanging the roles of $ (u, v) $ and $ (\tilde u, \tilde v) $ in the above argument, we get 
that $ (\tilde u, \tilde v)\leq (u, v) $ so that finally $ (\tilde u, \tilde v) = (u, v) $. This is in contradiction with our initial hypothesis.
We conclude that the nonnegative solution to equation \eqref{eq:firsthomotopytauequalzero} is unique.

\item For given $c\in \R$,  let $ (u, v) $ be the solution to \eqref{eq:firsthomotopytauequalzero}. In order to use a sliding technique, we define 
$$ 
(u^t(s,x), v^t(s, x)) := (u(s+t, x), v(s+t, x)) 
$$
for $ t>0 $ and $ (s, x)\in [-a, a-t]\times \mathbb R $. From the boundary conditions, there is $ \delta>0 $ such that 
$$
    \forall t\in (2a-\delta, 2a), \forall (s, x)\in(-a, a-t)\times\mathbb R, \quad u^t(s, x)< u(s, x)~\textrm{and}~ v^t(s, x) < v(s, x).
$$ 
In particular, one can define
$$ 
    t_0:=\inf\{t>0, \forall (s, x)\in [-a, a-t], \; u^t(s, x)\leq u(s, x)~\textrm{and}~ v^t(s, x) \leq v(s, x)\}.
$$
Assume by contradiction that $ t_0 > 0 $. Then there exists $(s_0, x_0)\in(-a, a-t_0)\times \mathbb R $ such that, say, $ u^{t_0}(s_0, x_0)=u(s_0, x_0) $  (notice that $s_0=-a$ and $s_0=a-t_0$ are prevented by \eqref{bornes}). Since we have
$$ 
        L_\varepsilon\vectorize{ u^{t_0}-u \\  v^{t_0}-v}-c\vectorize{ u^{t_0}-u \\  v^{t_0}-v}_s+M\vectorize{ u^{t_0}-u \\  v^{t_0}-v}=(f+M)\vectorize{ u^{t_0} \\  v^{t_0}} - (f+M)\vectorize{ u \\ v } \leq 0
        $$
        and $
        \vectorize{ u^{t_0}-u \\  v^{t_0}-v}\leq 0$, 
the strong maximum principle implies $u^{t_0}\equiv u$, which contradicts $0<u<Kp$. We conclude that $ t_0=0 $, which  means that $ u $ and $ v $ are nonincreasing in $ s $.

\item Let $(c, u, v) $ and $ (\tilde c, \tilde u, \tilde v) $ two solutions of equation \eqref{eq:firsthomotopytauequalzero} with $ c < \tilde c $. As above, we define 
$$ 
(\tilde u^t(s,x), \tilde v^t(s, x)) := (\tilde u(s+t, x), \tilde v(s+t, x)), 
$$
and
$$ 
    t_0:=\inf\{t>0, \forall (s, x)\in [-a, a-t],\; \tilde u^t(s, x)\leq u(s, x)~\textrm{and}~\tilde v^t(s, x) \leq v(s, x)\}.
$$
Assume by contradiction that $ t_0 > 0 $. Then there again exists $ (s_0, x_0)\in(-a, a-t_0)\times \mathbb R $ such that, say,   $ \tilde u^{t_0}(s_0, x_0)=u(s_0, x_0) $. Moreover we have
$$ 
   \begin{array}{lcl}
        && L_\varepsilon\vectorize{\tilde u^{t_0}-u \\ \tilde v^{t_0}-v}-c\vectorize{\tilde u^{t_0}-u \\ \tilde v^{t_0}-v}_s+M\vectorize{\tilde u^{t_0}-u \\ \tilde v^{t_0}-v} \\
         &&= (f+M)\vectorize{\tilde u^{t_0} \\ \tilde v^{t_0}} - (f+M)\vectorize{ u \\ v } 
                                                                                                                                                    +(\tilde c-c) \vectorize{ \tilde u ^{t_0} \\ \tilde v^{t_0}}_s\\
        & &\leq \vectorize{ 0 \\ 0 },
    \end{array}
$$
since $ \tilde u_s\leq 0 $ and $ \tilde v_s\leq 0$ (recall that $ \tilde u $ and $ \tilde v $ are decreasing), so that we again derive a contradiction. As a result 
 $ t_0=0 $ , that is  $ \left(\begin{matrix}\tilde u \\ \tilde v\end{matrix}\right)\leq\left(\begin{matrix}u\\v\end{matrix}\right)$ and then  $ \left(\begin{matrix}\tilde u \\ \tilde v\end{matrix}\right)< \left(\begin{matrix}u\\v\end{matrix}\right)$ from the strong maximum principle.
\end{enumerate}
The lemma is proved.
\end{proof}

\subsection{Estimates along the second homotopy}\label{ss:secondhomotop}

The second homotopy allows us to get rid of the nonlinearity and the coupling in $ u $ and $ v $ at the expense of an increased linear part. For $0\leq \tau\leq 1$, we consider
\begin{equation}\label{eq:secondhomotopy}
\left\{\begin{array}{l}\begin{array}{rcl}
L_\varepsilon u-c u_s&=&\tau\left(u\left(r_u-\gamma_u\frac qK-\mu -\gamma_u u \right)+\mu  v\right)-(1-\tau)\mathcal Cu \\
L_\varepsilon v-c v_s&=&\tau\left(v\left(r_v-\gamma_v\frac pK-\mu -\gamma_v v \right)+\mu  u\right)-(1-\tau)\mathcal Cv
\end{array} \\
(u, v)(-a,x )=(Kp(x), Kq(x)), \quad \forall x\in \R \vspace{3pt}\\
 (u, v)(a, x)=(0, 0),\quad \forall x\in \R, 
\end{array}\right.
\end{equation}
with
\begin{equation}\label{eq:secondhomotopylinearpenality}
    \mathcal C := -\underset{x\in\mathbb R}{\min}\left(r_u(x)-\gamma_u(x)\left(\frac{q(x)}{K}+ C\right) -\mu (x),r_v(x)-\gamma_v(x)\left(\frac{p(x)}{K}+C\right) -\mu (x),
    0\right)
\end{equation}
where $ C$ is as in Lemma \ref{lem:firsthomotopyaprioriestimates} item \ref{item:firsthomotopyuniversalupperbound}.

\begin{lem}[A priori estimates along the second homotopy]\label{lem:secondhomotopyaprioriestimates}
Let a nonnegative $ (u,v)\in\Homotopyspace $ (where $\Omega=(-a,a)\times \R$ and the periodicity is understood only w.r.t. the $x\in\R$ variable) and $c\in \R$ solve \eqref{eq:secondhomotopy}, with $0\leq  \tau\leq 1 $. Then 
\begin{enumerate}
\item $ (u, v ) $ is a classical solution to \eqref{eq:secondhomotopy}, i.e. $ (u,v)\in \mathbf C^{2}(\overline\Omega) $. 
\label{item:secondhomotopyregularity}
\item \label{item:secondhomotopyuniversalupperbound} We have
$$
 u(s,x)+v(s,x)\leq C, \quad \forall (s,x)\in \bar \Omega=[-a,a]\times \R.
$$
\item $ (u,v) $ is positive in $ \Omega $. 
\label{item:secondhomotopypositivity}
\item  If $ a\geq a_0+\bar a $ and $ c\geq \bar c^\varepsilon $, we have 
$ 
    \Normalizationleft{u+v}<\frac{\nu}{2}$, where $\bar a$ is as in Lemma \ref{lem:firsthomotopyaprioriestimates} item \ref{item:firsthomotopyupperboundc}.
\label{item:secondhomotopyupperboundc}
\item  There exists $ \underline c=\underline c(a)\geq0 $ such that if 
$ c\leq -\underline c(a) $ then
$
    \Normalizationleft{u+v} > \nu$. 
\label{item:secondhomotopysubsolc=0}
\end{enumerate}
\end{lem}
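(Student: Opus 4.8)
The plan is to treat the five items essentially in order, recycling the machinery of Lemma \ref{lem:firsthomotopyaprioriestimates} wherever possible since \eqref{eq:secondhomotopy} is a genuinely cooperative system for every $\tau\in[0,1]$. For item \ref{item:secondhomotopyregularity}, standard interior and up-to-the-boundary elliptic regularity for the (nondegenerate, since $\ep>0$) operator $L_\ep-c\partial_s$ applies verbatim; I would just cite classical theory. For item \ref{item:secondhomotopyuniversalupperbound}, I would again work with the sum $S:=u+v$. Summing the two equations, the cross-coupling $\mu$-terms cancel, the competition contributes $-\tau\gamma_u u^2-\tau\gamma_v v^2\le 0$, and the penalty contributes $-(1-\tau)\mathcal C S\le 0$; using $r_u,r_v\le\maxr$ and $\tau\le 1$ one gets $L_\ep S-cS_s\le \maxr S$ at any interior maximum, in fact (keeping the quadratic term as in the first homotopy) $L_\ep S-cS_s\le \tfrac{\mingamma}{2}\tau S(\tfrac{2\maxr}{\mingamma}-S)$ when $\tau>0$, and $L_\ep S - cS_s \le 0$ when $\tau = 0$. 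Since the maximum principle holds for $L_\ep-c\partial_s$ independently of $c$ and $\ep$, $S$ cannot have an interior local maximum exceeding $\max(\tfrac{2\maxr}{\mingamma},0)$; together with the boundary data $S(-a,x)=K(p+q)\le C$ and $S(a,x)=0$ this gives $S\le C$ with the same $C$ as before. For item \ref{item:secondhomotopypositivity}, if $u$ vanished at an interior point, I would use that $u$ satisfies $L_\ep u-cu_s\ge u\bigl[-\|r_u\|_\infty-\maxgamma C-\maxmu-(1-\tau)\mathcal C\bigr]$ (a linear inequality with bounded zeroth-order coefficient), so the strong maximum principle forces $u\equiv 0$, contradicting $u(-a,\cdot)=Kp>0$; same for $v$.

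For item \ref{item:secondhomotopyupperboundc}, the supersolution argument of Lemma \ref{lem:firsthomotopyaprioriestimates} item \ref{item:firsthomotopyupperboundc} carries over with almost no change: I would check that $\zeta(s,x):=Be^{-\lambda_0 s}\Phi_0(x)$, with $\Phi_0>0$ solving $S_{\bar c^\ep,\lambda_0,\ep}\Phi_0=0$, is still a supersolution of \eqref{eq:secondhomotopy} when $c\ge\bar c^\ep$. Indeed $L_\ep\zeta-c\zeta_s=A(x)\zeta+\lambda_0(c-\bar c^\ep)\zeta\ge A(x)\zeta$, and the right-hand side of \eqref{eq:secondhomotopy} is, componentwise, $\le A(x)\binom{u}{v}$: for the $\tau$-part because $-\gamma_u u^2\le 0$ and $-\gamma_u\tfrac qK u\le 0$ (and likewise for $v$), and the $(1-\tau)$ penalty term only subtracts; then one absorbs $\tau$ versus $1$ and uses $\mathcal C\ge 0$. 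With the sliding-down-to-$s=-a$ argument and the boundary bound $B_0\le Ke^{-\lambda_0 a}\tfrac{\max(p,q)}{\min(\Phi_u,\Phi_v)}$ unchanged, one concludes $\sup_{\Omega_0}(u+v)<\nu/2$ exactly as before, provided $a\ge a_0+\bar a$.

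The main obstacle is item \ref{item:secondhomotopysubsolc=0}: producing a lower bound for the normalization when $c$ is very negative. The idea is that as $c\to-\infty$ the drift term $-c\partial_s$ (with $-c>0$) forces the solution to stay large on a long stretch of the strip. I would build a subsolution of the form $\underline U(s,x)=\phi(s)\Psi(x)$ on $\Omega_0=(-a_0,a_0)\times\R$, where $\Psi>0$ is a fixed periodic vector field comparing to the boundary profile $(Kp,Kq)$ from below, and $\phi$ is chosen so that $\phi(-a_0)\le$ the relevant fraction of the boundary data while $\phi$ stays bounded away from $0$ on all of $[-a_0,a_0]$: concretely, one exploits that for the \emph{linear cooperative} lower-bound system (using the worst-case bounded zeroth-order coefficient from item \ref{item:secondhomotopypositivity}, call it $-\kappa_0$) the function $e^{\sigma(s+a)}$-type barriers propagate the $s=-a$ data a distance controlled by $|c|$; choosing $\underline c(a)$ large makes that distance exceed $2a$, so the subsolution remains $\ge c_1>0$ throughout $\Omega_0$. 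Then the strong maximum principle gives $u,v\ge$ that subsolution on $\Omega_0$, hence $\sup_{\Omega_0}(u+v)\ge 2c_1$, and one picks constants so that $2c_1>\nu$ (recall $\nu<\nu_0\le 1$, and one is free to enlarge $\underline c(a)$). Care is needed because $r_u,r_v$ may be negative, which is precisely why the barrier must be driven by the drift rather than by a spectral gap; I would also check that the construction is uniform in $\tau\in[0,1]$, using $\mathcal C\ge 0$ and $\tau\le 1$ to bound the zeroth-order terms on both sides. This is where I expect the bookkeeping to be heaviest, and where the dependence $\underline c=\underline c(a)$ genuinely enters.
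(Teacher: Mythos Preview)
Your treatment of items \ref{item:secondhomotopyregularity}--\ref{item:secondhomotopyupperboundc} is correct and matches the paper, which simply says these are proved as in Lemma \ref{lem:firsthomotopyaprioriestimates}.

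For item \ref{item:secondhomotopysubsolc=0} your idea (a barrier driven by the large drift $-c\partial_s$) is right, but the execution is both vaguer and more complicated than necessary, and contains a small confusion about the domain. The paper's argument is shorter because it exploits the very definition \eqref{eq:secondhomotopylinearpenality} of $\mathcal C$: since $u+v\le C$ by item \ref{item:secondhomotopyuniversalupperbound}, one has for every $\tau\in[0,1]$
\[
L_\ep u-cu_s+\mathcal C u=\tau\Bigl[u\bigl(r_u-\gamma_u(\tfrac qK+u)-\mu+\mathcal C\bigr)+\mu v\Bigr]\ge 0,
\]
and similarly for $v$. So there is no need for your generic constant $\kappa_0$ or an $x$-dependent factor $\Psi(x)$: the clean scalar inequality $L_\ep u-cu_s+\mathcal C u\ge 0$ with constant coefficient $\mathcal C$ allows comparison on the \emph{full} strip $(-a,a)\times\R$ (not $\Omega_0$, where you have no boundary control) against the explicit $x$-independent solution
\[
\theta(s)=m\,\frac{e^{\alpha_-s+\alpha_+a}-e^{\alpha_+s+\alpha_-a}}{e^{(\alpha_+-\alpha_-)a}-e^{(\alpha_--\alpha_+)a}},\qquad \alpha_\pm=\frac{-c\pm\sqrt{c^2+4(1+\ep)\mathcal C}}{2(1+\ep)},\quad m=K\min(p,q),
\]
which solves $L_\ep\theta-c\theta_s+\mathcal C\theta=0$ with $\theta(-a)=m$, $\theta(a)=0$. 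A single exponential $e^{\sigma(s+a)}$ as you suggest cannot match the homogeneous Dirichlet condition at $s=a$, so you would anyway be led to such a combination. From $u,v\ge\theta$ one gets $\sup_{\Omega_0}(u+v)\ge 2\theta(0)$, and an elementary expansion of $\alpha_-$ as $c\to-\infty$ shows $\theta(0)\to m\ge\nu_0>\nu$; this is where the $a$-dependence of $\underline c(a)$ enters. Your approach would ultimately work, but recognising the role of $\mathcal C$ removes all the ``heavy bookkeeping'' you anticipated.
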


\begin{proof}
Items 1, 2, 3 and 4 can be proved as in Lemma \ref{lem:firsthomotopyaprioriestimates}. We therefore omit the details, and only focus on item \ref{item:secondhomotopysubsolc=0}. 

From item \ref{item:secondhomotopyuniversalupperbound} and the choice of $\mathcal C$ we see that, for any $0\leq \tau \leq 1$,
$$
L_\ep u-cu_s+\mathcal C u\geq 0,\quad u(-a,x)=Kp(x), \quad u(a,x)=0.
$$
Now, let $ \alpha_{\pm}:=\frac{-c\pm\sqrt{c^2+4(1+\varepsilon)\mathcal C}}{2(1+\varepsilon)}
$ and $ m:=K\underset{x\in\mathbb R}{\min}\left(p(x), q(x)\right)>0$.
Then the function $ \theta(s, x)=\theta(s):=m\frac{e^{\alpha_-s+\alpha_+a}-e^{\alpha_+s+\alpha_-a}}{e^{(\alpha_+-\alpha_-)a}-e^{(\alpha_--\alpha_+)a}} $
solves
$$
L_\ep \theta -c\theta _s+\mathcal C \theta =0,\quad \theta(-a)=m, \quad \theta(a)=0.
$$
From the comparison principle, we infer that $u(s,x)\geq \theta (s)$, and similarly $v(s,x)\geq \theta (s)$, for all $(s,x)\in(-a,a)\times \R$. As a result  $ \Normalizationleft{u+v}\geq 2\sup _{(-a_0,a_0)} \theta \geq 2\theta (0)$.

 Next, for $
    c\leq -c^1(a):=-\frac{1+\varepsilon}{a}\ln 4
$
one has
$
    e^{(\alpha_--\alpha_+)a}\leq \frac 14
$
so that
$$ 
    \theta(0)\geq m\frac{e^{\alpha_+ a}-e^{\alpha_- a}}{e^{(\alpha_+-\alpha_-)a}}=m e^{\alpha_-a}\left(1-e^{(\alpha_--\alpha_+)a}\right)\geq m \frac{3e^{\alpha_-a}}{4}.
$$
Next, thanks to a Taylor expansion, we have
$$
    \alpha_-=\frac{-c}{2(1+\varepsilon)}\left(1-\sqrt{1+\frac{4(1+\varepsilon)\mathcal C}{c^2}}\right)=\frac{-c}{2(1+\varepsilon)}\left(-\frac{2(1+\varepsilon)\mathcal C}{c^2}+o\left(\frac{1}{c^2}\right)\right)=\frac{\mathcal C}{c}+o\left(\frac{1}{|c|}\right)
$$
so that there exists $ c^2=c^2(a)>0 $ such that  for any $ c\leq -c^2(a) $ we have $e^{\alpha _-a}>\frac 23$. As a result when $c\leq -
    \underline c(a):=-\max(c^1(a),c^2(a))$, we have
$$
    \Normalizationleft{u+v}\geq  m\geq \nu_0>\nu,
$$
which proves item \ref{item:secondhomotopysubsolc=0}.
\end{proof}

\subsection{Proof of Theorem \ref{thm:existencestrip}}\label{ss:proofexistencestrip}

Equipped with the above estimates, we are now in the position to prove Theorem \ref{thm:existencestrip} using three homotopies and the Leray Schauder topological degree. To do so, let us define the following open subset of $\R\times \Homotopyspace$
\begin{equation*}
    \Gamma:=\left\{\left(c, \vectorize{u\\v}\right)\in \mathbb R\times \Homotopyspace: c\in (0, \bar c^\varepsilon+\ep), \vectorize{0\\0}<\vectorize{u\\v}<\vectorize{C\\C} \text{ in } \Omega\right\}
\end{equation*}
where $\Omega=(-a,a)\times \R$, and $ C >0$ is the  constant defined in Lemma
\ref{lem:firsthomotopyaprioriestimates} item \ref{item:firsthomotopyuniversalupperbound}.

$\bullet$  We develop the first homotopy argument. For $0\leq \tau \leq 1$, let us define the  operator
$$
\begin{array}{rccl} F_\tau: & \mathbb R\times\Homotopyspace & \rightarrow & \mathbb R\times\Homotopyspace \\
 \end{array}
$$
where $ F_{\tau}\left(c, \vectorize{u\\v}\right)=\left(\tilde c, \vectorize{\tilde u\\ \tilde v}\right)$, with 
$$
\tilde c = c + \Normalizationleft{\tilde u+\tilde v}-\nu
$$
and $ \vectorize{\tilde u\\ \tilde v} $ is the unique solution in $ \Homotopyspace $ of the linear problem
$$
\left\{\begin{array}{l}\begin{array}{rcl}
L_\varepsilon \tilde u-c\tilde u_s&=&u(r_u-\gamma_u(u+(\tau v+(1-\tau)\frac qK)))+\mu v-\mu u \\
L_\varepsilon \tilde v-c\tilde v_s&=&v(r_v-\gamma_v((\tau u+(1-\tau)\frac pK)+v))+\mu u-\mu v 
\end{array} \\
(u, v)(-a,x )=(Kp(x), Kq(x)), \quad \forall x\in \R \vspace{3pt}\\
 (u, v)(a, x)=(0, 0),\quad \forall x\in \R.
\end{array}\right.
$$
From standard elliptic estimates, for any $ 0\leq \tau\leq 1$, $ F_\tau $ maps $ \Homotopyspace $ into $ \mathbf C^2_{per}(\overline \Omega) $, which 
shows that $ F_\tau $ is a compact operator in $ \Homotopyspace $. Moreover $F_\tau$ depends continuously on the parameter $0\leq \tau \leq 1$. The Leray-Schauder topological argument can thus be applied: in order to prove that the degree is independent of the parameter $ \tau $,  it suffices to show that there is no fixed point of $ F_\tau $ on the boundary $ \partial\Gamma $, which will be a consequence of estimates in subsection \ref{ss:firsthomotop}. Indeed, let $\left(c,\vectorize{u\\ v}\right)=(c,u,v)$ be a fixed point of $F_\tau$ in $ \overline\Gamma $.
\begin{enumerate}
\item From Lemma \ref{lem:firsthomotopyaprioriestimates}, Lemma \ref{lem:lepestimate} and Remark \ref{rem:consis} we know that if $ c=0 $ then $ \Normalizationleft{u+v}>\nu $ so that $ \tilde c>c $, which is absurd. That shows $ c\neq 0$ .
\item From Lemma \ref{lem:firsthomotopyaprioriestimates} we know that if $ c\geq \bar c^\varepsilon  $ then $ \Normalizationleft{u+v}<\nu $ so that $ \tilde c<c $, which is absurd. That shows $ c<\bar c^\ep +\ep$.
\item From Lemma \ref{lem:firsthomotopyaprioriestimates} we know that $ u<C $ and $ v<C $.
\item From Lemma \ref{lem:firsthomotopyaprioriestimates} and the boundary condition at $ s=-a $, we know that $ u>0 $ and $v>0$ in $ [-a, a)\times \mathbb R $. Moreover, we know from Hopf lemma that $\forall x\in\mathbb R $, $ u_s(a, x)< 0 $ and $ v_s(a, x)< 0 $.
\end{enumerate}
As a result, $ (c, u, v)\notin\partial\Gamma $  so that
\begin{equation}\label{eq:firsthomotopydegreeconservation}
\deg(Id-F_1, \Gamma, 0)=\deg(Id-F_0, \Gamma, 0).
\end{equation}

$\bullet$ We now consider the second homotopy. For $0\leq \tau \leq 1$, let us define the operator
$$
\begin{array}{rccl} G_\tau: & \mathbb R\times\Homotopyspace & \rightarrow & \mathbb R\times\Homotopyspace \\
                            & \vectorize{c, \vectorize{u\\ v}}    & \mapsto     & \vectorize{\tilde c, \vectorize{\tilde u\\ \tilde v}}\end{array}
$$
with again
$$
\tilde c = c +\Normalizationleft{\tilde u +\tilde v}-\nu
$$
and $ \vectorize{\tilde u\\ \tilde v}$ is the unique solutions in $ \Homotopyspace $ of the linear problem
$$
\left\{\begin{array}{l}\begin{array}{rcl}
L_\varepsilon \tilde u-c\tilde u_s + (1-\tau)\mathcal C\tilde u&=&\tau\left(u\left(r_u-\gamma_u\frac qK-\mu -\gamma_u u \right)+\mu  v\right) \\
L_\varepsilon \tilde v-c\tilde v_s + (1-\tau)\mathcal C\tilde v&=&\tau\left(v\left(r_v-\gamma_v\frac pK-\mu -\gamma_v v \right)+\mu  u\right)
\end{array} \\
(u, v)(-a,x )=(Kp(x), Kq(x)), \quad \forall x\in \R \vspace{3pt}\\
 (u, v)(a, x)=(0, 0),\quad \forall x\in \R,  
\end{array}\right.
$$
and $ \mathcal C $ is defined by \eqref{eq:secondhomotopylinearpenality}. Notice that $ G_\tau $ is 
a continuous family of compact operators and that $ G_1=F_0 $. From Lemma \ref{lem:firsthomotopyaprioriestimates} and Lemma \ref{lem:firsthomotopytauequalzero}, we see that there is no fixed point of $F_0$ such that $c\leq 0$ since $ c\mapsto \vectorize{u\\v} $ is nonincreasing. As a result enlarging $\Gamma$ into
\begin{equation*}
    \tilde \Gamma:=\left\{\left(c, \vectorize{u\\v}\right)\in \mathbb R\times \Homotopyspace: c\in (-\underline c(a), \bar c^\varepsilon+\ep), \vectorize{0\\0}<\vectorize{u\\v}<\vectorize{C\\C} \text{ in } \Omega\right\},
   \end{equation*}
with $\underline c(a)\geq 0$ as in Lemma \ref{lem:secondhomotopyaprioriestimates}, does not alter the degree, that is
\begin{equation}\label{eq:firsthomotopytauequalzerochangeopenset}
    \deg (Id-F_0, \Gamma, 0) = \deg(Id-F_0, \tilde \Gamma, 0)=\deg(Id-G_1, \tilde \Gamma, 0).
\end{equation}
Next, using the estimates of Lemma \ref{lem:secondhomotopyaprioriestimates} and Hopf lemma as above, we see that there is no fixed point of $ G_\tau $ on the boundary $ \partial \tilde \Gamma $. We have then
\begin{equation}\label{eq:secondhomotopydegreeconservation}
    \deg(Id-G_1, \tilde \Gamma, 0)=\deg(Id-G_0, \tilde \Gamma, 0).
\end{equation}
Now $ G_0 $ is independent of $ (u, v) $. Since $ L_\varepsilon-c\partial_s +\mathcal CId$ is invertible for each 
$ c\in \mathbb R $, there 
exists exactly one solution of \eqref{eq:secondhomotopy} with $ \tau=0 $ for each $ c\in\mathbb R $, which we denote $ (u_c, v_c) $. Thanks to a 
sliding argument, which we omit 
here, the solutions to  \eqref{eq:secondhomotopy} with $ \tau=0 $ are nonincreasing in $s$ and $ c\mapsto (u_c, v_c) $ is decreasing, so that 
there exists a 
unique $ c\in(-\underline c(a), \bar c ^{\ep}+\ep) $, which we denote $ c_0 $, such that $ (c_0,u_{c_0},v_{c_0}) $ is a fixed point to $ G_0 $.

$\bullet$ Finally a third homotopy allows us to compute the degree.  For $0\leq \tau \leq 1$, let us define the operator $H_\tau:  \mathbb R\times\Homotopyspace  \rightarrow  \mathbb R\times\Homotopyspace 
$ by

$$
    H_\tau(c, u, v)=\left(c+\Normalizationleft{u_c+v_c}-\nu, \tau u_c + (1-\tau) u_{c_0}, \tau v_c+ (1-\tau) v_{c_0}\right).
$$
Noticing that $H_1=G_0$ and that, again, $ H_\tau $ has no fixed point on the boundary $ \partial \tilde \Gamma $, we obtain
\begin{equation}\label{eq:thirdhomotopydegreeconservation}
 \deg(Id-G_0, \tilde \Gamma, 0)=    \deg(Id-H_1, \tilde \Gamma, 0)=\deg(Id-H_0, \tilde\Gamma, 0).
\end{equation}
Then since $ H_0 $ has separated variables and $ c\mapsto\Normalizationleft{u_c+v_c} $ is decreasing, we see that
\begin{equation}\label{eq:thirdhomotopydegreecomputation}
    \deg(Id-H_0, \tilde \Gamma, 0)=1.
\end{equation}

$\bullet$ Combining \eqref{eq:firsthomotopydegreeconservation}, \eqref{eq:firsthomotopytauequalzerochangeopenset}, 
\eqref{eq:secondhomotopydegreeconservation}, \eqref{eq:thirdhomotopydegreeconservation} and \eqref{eq:thirdhomotopydegreecomputation}, we get $\deg(Id-F_1, \Gamma, 0)=1$, 
which shows the existence of a solution to \eqref{eq:pbexistencestrip} in $ \mathbf C^1_{per}(\Omega) $. Theorem \ref{thm:existencestrip} is proved.\qed

\section{Pulsating fronts}\label{s:fronts}

From the previous section,  we are  equipped with  a solution to \eqref{eq:pbexistencestrip} in the strip $(-a,a)\times \R$. From the estimates of Theorem \ref{thm:existencestrip} and standard elliptic estimates, we can --- up to a subsequence--- let $a\to\infty$ and then recover, for any $0<\ep<1$, a speed $0<c=c^\ep<\bar c ^\ep +\ep$ and smooth profiles $(0,0)<(u(s,x),v(s,x))=(u^{\ep}(s,x),v^{\ep}(s,x))< (C,C)$ solving
\begin{equation}\label{pb:towards-epsilon-0}
\left\{
\begin{array}{l}
\begin{array}{rcl}
-u_{xx}-2u_{xs}-(1+\varepsilon)u_{ss}-cu_{s}&=&u(r_u-\gamma_u(u+v))+\mu v-\mu u \quad \text{ in }\R ^{2}\\
-v_{xx}-2v_{xs}-(1+\varepsilon)v_{ss}-cv_{s}&=&v(r_v-\gamma_v(u+v))+\mu u-\mu v  \quad \text{ in }\R ^{2}
\end{array} \\
(u, v)(s, \cdot) \quad \text{ is $L$-periodic} \\
\Normalizationleft{u+v} = \nu.
\end{array}\right.
\end{equation}
Let us mention again that, because of the lack of comparison, we do not know that the above solution is decreasing in $s$, in sharp contrast with the previous results on pulsating fronts \cite{Wei-02}, \cite{Ber-Ham-02}, \cite{HZ},
\cite{Ber-Ham-Roq-05-n2}, \cite{Ham-08}, \cite{Ham-Roq-11}. To overcome this lack of monotony, further estimates will be required. 

Now, the main difficulty is to show that, letting $\ep \to 0$, we recover a nonzero speed and thus a pulsating front. 
To do so, it is not convenient to use the $(s,x)$ variables, and we therefore switch to functions
$$
\tilde u(t, x):=u(x-ct, x),\quad
\tilde v(t, x):=v(x-ct, x), \quad (t,x)\in \R ^{2},
$$
which are consistent with Definition \ref{def:pul} of a pulsating front. Hence, after dropping  the tildes, \eqref{pb:towards-epsilon-0} is recast
\begin{equation}\label{pb:towards-epsilon-0-parabolic}
\left\{\begin{array}{l}
\begin{array}{rcl}
-\frac{\varepsilon}{c^2} u_{tt}- u_{xx}+ u_t&=& u(r_u-\gamma_u( u+ v))+\mu  v-\mu  u \quad \text{ in }\R ^{2}\vspace{3 pt}\\
-\frac{\varepsilon}{c^2} v_{tt}- v_{xx}+ v_t&=& v(r_v-\gamma_v(u+ v))+\mu  u-\mu  v \quad \text{ in }\R ^{2}\vspace{3 pt}\\
\end{array} \\
\underset{x-ct\in (-a_0, a_0)}{\sup}\, u(t, x)+ v(t, x) = \nu.
\end{array}\right.
\end{equation}
Also the $L$ periodicity for \eqref{pb:towards-epsilon-0} is transferred into the constraint \eqref{eq:proppuls} for  \eqref{pb:towards-epsilon-0-parabolic}. Moreover, up to a translation, we can assume w.l.o.g. that the solution to \eqref{pb:towards-epsilon-0-parabolic} satisfies
\begin{equation}\label{eq:realnorm}
    \underset{x\in (-a_0, a_0)}{\sup}( u(0, x)+ v(0, x))=\nu.
\end{equation}
Also,  though $ t $ can be interpreted as a time, we would like to stress out that \eqref{pb:towards-epsilon-0-parabolic} is not a Cauchy problem.

Our first goal in this section  is to let $\ep \to 0$ in \eqref{pb:towards-epsilon-0-parabolic} and get the following.

\begin{thm}[Letting the regularization tend to zero]\label{thm:existencepuls}
There exist a speed $0<c\leq \bar c ^{0}:=\lim _{\ep \to 0} \bar c ^{\ep}$ (see Lemma \ref{lem:speed}) and  positive profiles $ (u,v) $ solving, in the classical sense,
\begin{equation}\label{eq:pb-parabolic-line}
\left\{\begin{array}{rcl}
u_t-u_{xx}&=&u(r_u-\gamma_u(u+v))+\mu(v-u) \quad \text{ in } \R^{2}\\
v_t-v_{xx}&=&v(r_v-\gamma_v(u+v))+\mu(u-v) \quad \text{ in } \R^{2},
\end{array}\right.
\end{equation}
satisfying the constraint \eqref{eq:proppuls} and, for some $a_0>0$, the normalization
\begin{equation*}
\pulsenorm{u+v}=\nu.
\end{equation*}
\end{thm}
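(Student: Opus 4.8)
The plan is to pass to the limit $\ep\to0$ in the family $(c^\ep,u^\ep,v^\ep)_{0<\ep<1}$ obtained just above: for each $\ep$ these solve \eqref{pb:towards-epsilon-0}, equivalently \eqref{pb:towards-epsilon-0-parabolic}, satisfy $0<c^\ep<\bar c^\ep+\ep$ and $0<u^\ep,v^\ep<C$, the pulsating constraint \eqref{eq:proppuls}, and (after the translation fixing \eqref{eq:realnorm}) the normalizations. Since $\ep\mapsto\bar c^\ep$ is nondecreasing and finite (Lemma \ref{lem:speed}), the speeds $c^\ep$ stay in a fixed compact set, so along a subsequence $c^\ep\to c_\star\in[0,\bar c^0]$, with $\bar c^0=\lim_{\ep\to0}\bar c^\ep$. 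The heart of the matter will be to show $c_\star>0$; once this is known, a compactness argument finishes the proof.

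First I would collect the $\ep$-uniform a priori bounds. The profiles are bounded, $0<u^\ep,v^\ep<C$, and the generalization to elliptic systems of the Bernstein-type gradient estimate of Appendix \ref{s:bernstein}, applied to the (regularized, hence elliptic) system \eqref{pb:towards-epsilon-0}, gives $\|\nabla_{(s,x)}(u^\ep,v^\ep)\|_{L^\infty(\R^2)}\le C_1$ with $C_1$ independent of $\ep$. Since $c^\ep$ is bounded, this also bounds $\nabla_{(t,x)}(u^\ep,v^\ep)$ uniformly. By Arzel\`a--Ascoli and a diagonal extraction, $(u^\ep,v^\ep)\to(u,v)$ in $C^0_{\mathrm{loc}}$, with $(u,v)$ nonnegative, bounded by $C$, $L$-periodic in $x$ (in the $(s,x)$ picture), satisfying \eqref{eq:proppuls}; and the normalizations pass to the limit because \eqref{eq:realnorm} involves a supremum over a compact set on which the convergence is uniform, so in particular $(u,v)\not\equiv(0,0)$.

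Suppose now $c_\star>0$. Then $\ep/(c^\ep)^2\to0$, so in the weak formulation of \eqref{pb:towards-epsilon-0-parabolic} the term $\tfrac{\ep}{c^2}u_{tt}$, integrated by parts once against a test function, has coefficient tending to $0$ while $u^\ep_t$ stays $L^\infty$-bounded; hence it drops out in the limit, and using the local uniform convergence of the reaction terms one finds that $(u,v)$ is a weak solution of the non-degenerate parabolic system \eqref{eq:pb-parabolic-line}. Parabolic regularity then makes $(u,v)$ a classical solution. Positivity follows from the strong maximum principle applied equation by equation: writing the first equation as $u_t-u_{xx}-(r_u-\gamma_u(u+v)-\mu)u=\mu v\ge0$ with $u\ge0$, either $u>0$ or $u\equiv0$, and $u\equiv0$ is excluded by the second equation (similarly for $v$). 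Finally $0<c=c_\star\le\bar c^0$, since $c^\ep<\bar c^\ep+\ep\to\bar c^0$. This is exactly the assertion of the theorem.

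The main obstacle is to exclude the case $c_\star=0$ --- this is the content of the separate Lemma \ref{lem:cnotto0}, and the difficulty is real because $r_u,r_v$ are allowed to take negative values. I would argue by contradiction: if $c^\ep\to0$, then (after rescaling, the relevant scaling depending on the behaviour of $\ep/(c^\ep)^2$) the normalized profiles converge to a stationary object $(u_0,v_0)$ which, slice by slice, is a nonnegative solution of the steady-state system \eqref{eq:systemorig}; by the normalization this limit is nontrivial. But a nontrivial nonnegative periodic solution of \eqref{eq:systemorig} satisfies $\|u_0+v_0\|_{L^\infty}\ge -\l/\maxgamma$ --- e.g. by testing \eqref{eq:systemorig} against the principal eigenfunction $\Phi$ of \eqref{vp-u} and integrating by parts over a period, using $\l<0$ --- and $-\l/\maxgamma>\nu$ by the choice of $\nu_0$; when $r_u,r_v$ change sign one instead builds a subsolution from the principal eigenfunction of a Dirichlet problem such as \eqref{eq:stationaryDirichlet}, in the spirit of Lemma \ref{lem:firsthomotopyaprioriestimates} and Remark \ref{rem:consis}. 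Either way this contradicts the normalization $\sup(u_0+v_0)=\nu$. The genuinely delicate point is to make the degenerate limit $\ep\to0$ (with $c^\ep\to0$) rigorous and to identify the limit as stationary, all without a comparison principle; once $c_\star>0$ is established, the argument of the previous paragraph concludes.
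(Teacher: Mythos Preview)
Your overall architecture --- extract a subsequence with $c^{\ep_n}\to c_\star\in[0,\bar c^0]$, invoke Lemma~\ref{lem:cnotto0} to rule out $c_\star=0$, then pass to the limit in the equation --- matches the paper. But two of the steps, as you describe them, do not actually work.

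\textbf{The compactness step.} You apply the Bernstein estimate of Appendix~\ref{s:bernstein} to the $(s,x)$ system \eqref{pb:towards-epsilon-0} and claim a uniform bound on $\nabla_{(s,x)}(u^\ep,v^\ep)$. Lemma~\ref{lem:bernsteinestimate} is written for operators of the form $-\kappa\partial_{yy}-\partial_{xx}+\partial_y$ with no cross term, and its conclusion controls only $u_x^2+\kappa u_y^2$, not the full gradient. The $(s,x)$ operator $L_\ep$ has the mixed term $-2\partial_{xs}$ and ellipticity constant of order $\ep$, so neither the lemma as stated nor a standard Bernstein estimate yields an $\ep$-independent bound on $u_s$. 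The paper works instead in the $(t,x)$ variables \eqref{pb:towards-epsilon-0-parabolic}, where Lemma~\ref{lem:bernsteinestimate} applies with $\kappa=\ep/c^2$; this gives a uniform bound on $u_x$, then on $u_{xx}$ by differentiating in $x$, and the bound on $u_t$ is obtained by a separate ODE-in-$t$ argument (reading $u_t-\kappa u_{tt}=F$ with $F$ bounded). This is exactly the content of Case~3 in the proof of Lemma~\ref{lem:cnotto0}, and the paper's proof of the theorem simply says ``repeat the argument of Case~3''. Your route via $\nabla_{(s,x)}$ bounds does not go through.

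\textbf{The sketch for $c_\star=0$.} You say the rescaled limit is a stationary object solving the steady-state system \eqref{eq:systemorig}, and then bound it from below by $-\l/\maxgamma$. This is not what happens. In the paper's Lemma~\ref{lem:cnotto0} one distinguishes the three regimes $\ep_n/c_n^2\to\infty,\ \kappa\in(0,\infty),\ 0$; the limits solve, respectively, the elliptic system \eqref{eq:limitctozerobigcrespepsilon}, the elliptic system \eqref{eq:ctozerocase2}, and the parabolic system \eqref{eq:limitctozeroparabolic} --- none of them is the periodic ODE \eqref{eq:systemorig}. The contradiction is obtained not by comparing with steady states but by first pushing the solution forward in time via Lemma~\ref{lem:forward} to manufacture a strictly positive infimum on a large spatial interval, and then applying the uniform lower estimate of Lemma~\ref{lem:infpositiveinfnotsosmall}; this forces $\sup(u+v)\ge 2\underline\nu>\nu$, contradicting the normalization. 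Your ``test against $\Phi$'' argument would need the limit to be $L$-periodic in $x$ and independent of $t$, which it is not.
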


The present section is organized as follows. After proving further estimates on solutions to \eqref{pb:towards-epsilon-0-parabolic} in subsection \ref{ss:lowerest}, we prove Theorem \ref{thm:existencepuls} in subsection \ref{ss:exist}, the main difficulty being to exclude the possibility of a standing wave. Finally, in subsection \ref{ss:proofpuls} we conclude the construction of a pulsating front, thus proving our main result Theorem \ref{th:pulsating}.

\subsection{Lower estimates on solutions to \eqref{pb:towards-epsilon-0-parabolic}} \label{ss:lowerest}

We start by showing a uniform lower bound on the solutions to \eqref{pb:towards-epsilon-0-parabolic} that have a positive lower bound. The argument relies on the sign of the eigenvalue $\l$, or more precisely that of the first eigenvalue to the stationary Dirichlet problem in large bouded domains. For $b>0$, we denote 
$ (\lambda_1^b, \Phi^b) $ with $ \Phi^b(x):=\vectorize{\varphi^b (x) \\ \psi^b(x)} $ the unique eigenpair solving
\begin{equation}\label{eq:stationaryDirichlet}
    \left\{\begin{array}{l}
    -\Phi^b_{xx}-A(x)\Phi^b = \lambda_1^b\Phi^b \\
    \varphi^b(x)>0,\; \psi^b(x)>0, \quad x\in (-b, b) \\
    \varphi^b(\pm b)=\psi^b(\pm b)=0,
\end{array}
\right.
\end{equation}
and $ \Vert\Phi^b\Vert_{\Lp\infty(-b, b)}=1$. From Lemma \ref{lem:annexeC}, we know that $ \lambda_1^b \to \l <0$ when $ b\to\infty $. We can thus select $a_1>a_0^*$, with $a_0^*$ as in Theorem \ref{thm:existencestrip}, large enough so that
\begin{equation}\label{eq:defa1}
b\geq a_1 \Rightarrow \lambda_1^b\leq\frac{3\l}{4}.
\end{equation}
Also, from  Hopf lemma we have  $ C^b:=\underset{x\in(-b, b)}\sup\left(\frac{\varphi^b(x)}{\psi^b(x)}, \frac{\psi^b(x)}{\varphi^b(x)}\right) <+\infty$.

\begin{lem}[A uniform lower estimate]\label{lem:infpositiveinfnotsosmall}
    Let $ (u(t,x),v(t,x)) $ be a classical positive solution to 
\begin{equation}\label{blabla}
    \left\{
        \begin{array}{rcl}
            \beta u_t-\kappa u_{tt}-u_{xx}&=&u(r_u-\gamma_u(u+v))+\mu v-\mu u \quad \text{ in } \R^2\\
            \beta v_t-\kappa v_{tt}-v_{xx}&=&v(r_v-\gamma_v(u+v))+\mu u-\mu v  \quad \text{ in } \R^2,
        \end{array} 
    \right.
\end{equation}
with $ \kappa\geq 0 $ and $ \beta\in\mathbb R $. Let also $ b\geq a_1 $ and $ \Phi^b $ the solution to \eqref{eq:stationaryDirichlet}.

Then there exists a constant $ \alpha_0=\alpha_0(\minmu, \maxgamma, \lambda_1^b, C^b) >0$ such that if
$$
\underset{(t, x)\in\mathbb R\times (-b, b)}\inf\min(u(t,x),v(t,x))>0
$$
then 
$$
\forall (t, x)\in \mathbb R\times (-b, b),\; \vectorize{u(t, x)\\v(t,x)}\geq\alpha_0\Phi^b(x).
$$
\end{lem}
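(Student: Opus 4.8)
The plan is to prove this by a \emph{sliding argument}, using as competitors the rescaled Dirichlet eigenfunctions $\alpha\Phi^b$. The key elementary observation is that, since $b\geq a_1$ forces $\lambda_1^b<0$ (see \eqref{eq:defa1}), the pair $(\alpha\varphi^b,\alpha\psi^b)$ is a subsolution of \eqref{blabla} on $\R\times(-b,b)$ as soon as $\alpha$ lies below an explicit threshold depending only on $\minmu,\maxgamma,\lambda_1^b,C^b$: subtracting the eigenfunction identity $-\varphi^b_{xx}=(r_u-\mu)\varphi^b+\mu\psi^b+\lambda_1^b\varphi^b$ from the first equation, the only obstruction is the competitive contribution $-\gamma_u\alpha^2(\varphi^b+\psi^b)\varphi^b$, which is controlled by the \emph{negative} term $\alpha\lambda_1^b\varphi^b$ precisely when $\gamma^\infty\alpha(\varphi^b+\psi^b)\leq-\lambda_1^b$ (and symmetrically for $v$); using $\varphi^b,\psi^b\leq 1$ together with $\psi^b\leq C^b\varphi^b$, $\varphi^b\leq C^b\psi^b$, this holds for $\alpha$ small in terms of $\gamma^\infty,\lambda_1^b,C^b$, and below we shall moreover ask $\alpha<\minmu/\gamma^\infty$. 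Let $\alpha_0=\alpha_0(\minmu,\maxgamma,\lambda_1^b,C^b)>0$ be the resulting threshold.

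First I would set $\delta:=\inf_{\R\times(-b,b)}\min(u,v)>0$, which by continuity is also the infimum over the closed strip $\R\times[-b,b]$, and define
\[
\alpha^*:=\sup\left\{\alpha\geq 0:\ u\geq\alpha\varphi^b\ \text{and}\ v\geq\alpha\psi^b\ \text{on}\ \R\times[-b,b]\right\}.
\]
Since $\|\Phi^b\|_{\Lp\infty(-b,b)}=1$ gives $\varphi^b,\psi^b\leq 1$, one has $\alpha^*\geq\delta>0$, and evaluating at $x=0$ shows $\alpha^*<\infty$; passing to the limit $\alpha\uparrow\alpha^*$ gives $(u,v)\geq\alpha^*\Phi^b$ on the closed strip. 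The goal is $\alpha^*\geq\alpha_0$, which is exactly the claim. Assume by contradiction $\alpha^*<\alpha_0$. Because $\varphi^b,\psi^b\leq 1$, a uniform positive gap in \emph{both} inequalities $u\geq\alpha^*\varphi^b$, $v\geq\alpha^*\psi^b$ would allow one to increase $\alpha^*$; hence $\min\big(\inf(u-\alpha^*\varphi^b),\inf(v-\alpha^*\psi^b)\big)=0$ on $\R\times[-b,b]$, say in the $u$-component (the other case being symmetric). Choosing a minimizing sequence $(t_n,x_n)$, the uniform bound $u\geq\delta$ on the closed strip and the vanishing of $\varphi^b$ at $\pm b$ keep $x_n$ at positive distance from $\pm b$, so $x_n\to x_\infty\in(-b,b)$ along a subsequence; translating $u_n(t,x):=u(t+t_n,x)$, $v_n(t,x):=v(t+t_n,x)$ and invoking interior elliptic (if $\kappa>0$) or parabolic (if $\kappa=0$) estimates for \eqref{blabla}, one extracts a limit $(u_\infty,v_\infty)$ solving the same system, with $u_\infty\geq\alpha^*\varphi^b$, $v_\infty\geq\alpha^*\psi^b$ and $u_\infty(0,x_\infty)=\alpha^*\varphi^b(x_\infty)$.

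Now $w:=u_\infty-\alpha^*\varphi^b\geq 0$ attains an interior minimum $0$ at $(0,x_\infty)$, so $w_t=0$, $w_{xx}\geq0$, $w_{tt}\geq0$ there, whence $\beta w_t-\kappa w_{tt}-w_{xx}\leq 0$. On the other hand, subtracting $\alpha^*$ times the eigenfunction identity from the $u_\infty$-equation and evaluating at $(0,x_\infty)$ (where $u_\infty=\alpha^*\varphi^b$ and $v_\infty-\alpha^*\psi^b\geq 0$), one gets
\[
\beta w_t-\kappa w_{tt}-w_{xx}=\left(v_\infty-\alpha^*\psi^b\right)\left(\mu-\gamma_u\alpha^*\varphi^b\right)-\alpha^*\varphi^b\left(\gamma_u\alpha^*(\varphi^b+\psi^b)+\lambda_1^b\right).
\]
Since $\alpha^*<\alpha_0\leq\min\!\big(\minmu/\gamma^\infty,\,-\lambda_1^b/(2\gamma^\infty)\big)$, we have $\mu-\gamma_u\alpha^*\varphi^b\geq\minmu-\gamma^\infty\alpha^*>0$ and $\gamma_u\alpha^*(\varphi^b+\psi^b)+\lambda_1^b\leq 2\gamma^\infty\alpha^*+\lambda_1^b<0$, so the right-hand side is strictly positive (using $\alpha^*>0$ and $\varphi^b(x_\infty)>0$), contradicting the sign of the left-hand side. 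Therefore $\alpha^*\geq\alpha_0$ and $(u,v)\geq\alpha_0\Phi^b$ on $\R\times(-b,b)$.

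The main obstacle, and the reason this is not an immediate consequence of a comparison principle, is precisely that \eqref{blabla} admits none: the estimate must be extracted by hand through the sliding/contact-point analysis, and the contact point may a priori escape to $t=\pm\infty$, which is what forces the translation--compactness step and the use of the $t$-uniform positivity $\delta$ together with the boundary vanishing of $\Phi^b$. The care needed is to check that the threshold $\alpha_0$ and the control near $\partial(-b,b)$ (where $C^b$ and $\lambda_1^b$ enter) are quantified purely in terms of $\minmu,\gamma^\infty,\lambda_1^b,C^b$, and never in terms of the a priori unknown constant $\delta$.
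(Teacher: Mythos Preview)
Your contact-point computation is correct and in fact cleaner than the paper's: by writing the difference as $(v_\infty-\alpha^*\psi^b)(\mu-\gamma_u\alpha^*\varphi^b)-\alpha^*\varphi^b(\gamma_u\alpha^*(\varphi^b+\psi^b)+\lambda_1^b)$ and using only $\varphi^b,\psi^b\leq 1$, you avoid the two-case split and the explicit use of $C^b$ that the paper goes through.

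The genuine difference is in how the non-compactness in $t$ is handled. You use a translation--compactness argument: extract a minimizing sequence $(t_n,x_n)$, shift in time, and pass to a limit via interior elliptic/parabolic estimates. The paper instead introduces a time-localized competitor
\[
\alpha(1-\eta t^{2})\Phi^b(x),
\]
which becomes negative for $|t|>1/\sqrt{\eta}$, so the touching point is \emph{automatically} trapped in the compact set $[-1/\sqrt{\eta},1/\sqrt{\eta}]\times[-b,b]$; the price is an error term of size $O(\sqrt{\eta}\,(|\beta|+\kappa)/\inf(u,v))$ in the contact inequality, which they absorb by taking $\eta$ small and then sending $\eta\to 0$.

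This buys something you are missing. Your compactness step (``invoking interior elliptic/parabolic estimates\ldots one extracts a limit'') tacitly requires a uniform $L^\infty$ bound on the translated sequence $u(\cdot+t_n,\cdot),v(\cdot+t_n,\cdot)$ on a neighborhood of $(0,x_\infty)$. The lemma, however, assumes only positivity and $\inf_{\mathbb R\times(-b,b)}\min(u,v)>0$; it does \emph{not} assume $(u,v)$ is bounded. Without that, the Schauder/parabolic estimates you invoke do not yield equicontinuity, and the extraction may fail. The paper's $(1-\eta t^2)$ trick sidesteps this entirely, using only the positive lower bound. In the applications within the paper the solutions are in fact bounded, so your argument could be repaired by adding that hypothesis or by inserting the paper's localization; but as written, the compactness step is a gap relative to the stated hypotheses.
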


\begin{proof} Let $0<\eta \leq 1$ be given. For $\alpha >0$, we define
$$
    \vectorize{ U^{\alpha, \eta}(t, x) \\ V^{\alpha, \eta}(t, x)}:=\alpha(1-\eta t^2)\vectorize{\varphi^b(x)\\{\psi^b}(x)}.
$$
Then for small $ \alpha<\min\left(\underset{(t, x)\in\mathbb R\times (-b, b)}\inf u, \underset{(t, x)\in\mathbb R\times (-b, b)}\inf v\right) $ we have 
$ \vectorize{ U^{\alpha, \eta}(t, x) \\ V^{\alpha, \eta}(t, x)}\leq \vectorize{ u(t, x) \\ v(t, x)} $ for all $(t, x)\in\mathbb R\times (-b, b)$,  whereas for large $ \alpha>\frac{\max(u(0, 0), v(0, 0))}{\min(\varphi^b(0), \psi^b(0))} $ one has 
$ \vectorize{ U^{\alpha, \eta}(0,0) \\ V^{\alpha, \eta}(0,0)}>\vectorize{ u(0,0) \\ v(0, 0)} $. 
Thus we can define
$$ 
    \alpha_0^\eta=\alpha _0:=\sup\left\{\alpha>0, \forall (t, x)\in\mathbb R\times (-b, b), \vectorize{ U^{\alpha, \eta}(t, x) \\ V^{\alpha, \eta}(t, x)}\leq \vectorize{ u(t, x) \\ v(t, x)}\right\}>0.
$$

 Assume by contradiction that
$$
\alpha_0\leq\alpha_0^*:=\min\left(1, \frac{\minmu}{2\maxgamma},
    \frac{-\lambda_1^b}{2(1+2C^b)\maxgamma}\right).
$$
There exists a touching point $ (t_0, x_0) \in (-\sqrt{\eta}, \sqrt \eta)\times (-b, b) $ such that either 
$ u(t_0, x_0)=U^{\alpha_0, \eta}(t_0, x_0) $ or $ v(t_0, x_0)=V^{\alpha_0, \eta}(t_0, x_0) $. Assume $ u(t_0, x_0)=U^{\alpha_0, \eta}(t_0, x_0) $ for instance. Then $u-U^{\alpha_0, \eta}$ reaches a zero minimum at $(t_0,x_0)$ so that
\begin{eqnarray*}
    0&\geq& \beta\left(u-U^{\alpha_0, \eta}\right)_t-\kappa\left(u-U^{\alpha_0, \eta}\right)_{tt}-\left(u-U^{\alpha_0, \eta}\right)_{xx} \\
    &=&(\beta u_t-\kappa u_{tt}-u_{xx}) + \alpha_0(1-\eta t_0^2)\varphi^b_{xx}+2\alpha_0\beta\eta t_0\varphi^b-2\alpha_0\kappa\eta\varphi^b
\end{eqnarray*}
at point $(t_0,x_0)$. Using \eqref{eq:stationaryDirichlet} and \eqref{blabla} yields
$$
    0\geq u(r_u-\mu-\gamma_u(u+v))+\mu v - \alpha_0(1-\eta t_0^2)(\varphi^b(r_u-\mu+\lambda_1^b)+\mu{\psi^b}) +2\alpha_0\eta \varphi^b(\beta t_0-\kappa)
$$
at point $(t_0,x_0)$, and since $ u(t_0, x_0)=\alpha_0(1-\eta t_0^2)\varphi^b(x_0)$ we end up with
\begin{equation}\label{twocases}
    0\geq u_0[-\lambda_1^b-\gamma_u(x_0)(u_0+v_0)]+\mu(x_0) [v_0-\alpha_0(1-\eta t_0^2){\psi^b}(x_0)] +2\alpha_0\eta \varphi^b(x_0)(\beta t_0-\kappa),
\end{equation}
with the notations $u_0=u(t_0,x_0)$, $v_0=v(t_0,x_0)$. Now two cases may occur.

$\bullet$
    Assume first that $ v_0\leq 2\alpha_0(1-\eta t_0^2) {\psi^b}(x_0) $. Then we have 
$$ 
    v_0\leq  2\alpha_0(1-\eta t_0^2) \frac{{\psi^b}(x_0)}{\varphi^b(x_0)}\varphi^b(x_0) \leq  2C^bu_0,
$$
and since $ v_0-\alpha_0(1-\eta t_0^2){\psi^b}(x_0)\geq 0 $, we deduce from \eqref{twocases} that
$$
    \gamma_u(x_0)(1+2C^b)u^2_0\geq -\lambda_1^bu_0 + 2\alpha_0\eta \varphi^b(x_0)(\beta t_0-\kappa),
$$
which in turn implies
$$
    \gamma ^\infty(1+2C^b)\alpha_0\geq\gamma_u(x_0)(1+2C^b)u_0\geq -\lambda_1^b + \frac{2\alpha_0\eta \varphi^b(x_0)(\beta t_0-\kappa)}{ u_0}\geq -\lambda_1^b - \frac{2\eta}{\inf u}(|\beta||t_0|+\kappa),
$$
since $\alpha _0\leq 1$ and $\varphi^b\leq 1$. Since $ |t_0|\leq \frac{1}{\sqrt\eta} $, one then has
\begin{equation}\label{un}
    \alpha_0\geq \frac{-\lambda_1^b}{(1+2C^b)\maxgamma}-2\sqrt\eta\frac{|\beta|+\kappa}{(1+2C^b)\maxgamma\inf u}.
\end{equation}

$\bullet$ 
On the other hand, assume $ v_0\geq 2\alpha_0(1-\eta t_0^2) {\psi^b}(x_0) $. Then we deduce from \eqref{twocases} that
\begin{eqnarray*}
    \gamma_u (x_0)u^2_0 &\geq &-\lambda_1^bu_0 + \frac{\mu(x_0)}{2} (v_0-2\alpha_0(1-\eta t_0^2){\psi^b}(x_0))+v_0\left(\frac{\mu(x_0)}{2}-\gamma_u(x_0) u_0\right) \\
    &&+2\alpha_0\eta \varphi^b(x_0)(\beta t_0-\kappa)\\
&\geq& -\lambda_1^bu_0+ 2\alpha_0\eta \varphi^b(x_0)(\beta t_0-\kappa),
\end{eqnarray*}   
since  $ \gamma_uu\leq\gamma_u\alpha_0^*\leq \frac{\minmu}{2}$. Arguing as in the first case, we end up with
\begin{equation}\label{deux}
    \alpha_0\geq\frac{-\lambda_1^b}{\maxgamma}-2\sqrt\eta\frac{|\beta|+\kappa}{\maxgamma\inf u}.
\end{equation}

From \eqref{un} , \eqref{deux} and the symmetric situation where $v(t_0, x_0)=V^{\alpha_0, \eta}(t_0, x_0) $, we deduce that, in any case,
\begin{equation}\label{eq:proofleminfisnotsmall}
    \alpha_0\geq\frac{-\lambda_1^b}{(1+2C^b)\maxgamma}-2\sqrt\eta\frac{|\beta|+\kappa}{\maxgamma\inf(u,v)}.
\end{equation}
One sees that for 
$$
    0<\eta<\eta^*:=\min\left(1, \left(\frac{-\lambda_1^b\inf(u,v)}{4(|\beta|+\kappa)(1+2C^b)} \right)^2\right),
$$
inequality \eqref{eq:proofleminfisnotsmall} is a contradiction since it implies $ \alpha_0> \alpha_0^* $. Hence we have shown that for any $ 0<\eta<\eta^* $ one has 
$ \alpha_0=\alpha_0^\eta>\alpha_0^* $. In particular 
$$
    \forall \eta\in(0, \eta^*),\forall (t, x)\in\mathbb R\times (-b, b),  \vectorize{u(t, x)\\v(t, x)} \geq \alpha_0^*(1-\eta t^2)\vectorize{\varphi^b(x) \\ {\psi^b}(x)}.
$$
Taking the limit $ \eta\to 0 $, we then obtain 
$$
\forall (t, x)\in\mathbb R\times (-b, b), \vectorize{u(t, x) \\ v(t, x)}\geq\alpha_0^*\Phi^b(x),
$$
which concludes the proof of  Lemma \ref{lem:infpositiveinfnotsosmall}.
\end{proof}

Next we establish a forward-in-time lower estimate for solutions of the (possibly degenerate) problem \eqref{blabla2}. 
The proof is based on the same idea as in Lemma \ref{lem:infpositiveinfnotsosmall}, but it is here critical that the coefficient $\beta$ of the time-derivative has the right sign. Roughly speaking, the following lemma asserts that once a population has reached a certain threshold on a large enough set, it cannot fall under that threshold at a later time. 

\begin{lem}[A forward-in-time lower estimate]\label{lem:forward}  Let $ (u(t,x),v(t,x)) $ be a classical positive solution to 
\begin{equation}\label{blabla2}
    \left\{
        \begin{array}{rcl}
            \beta u_t-\kappa u_{tt}-u_{xx}&=&u(r_u-\gamma_u(u+v))+\mu v-\mu u \quad \text{ in } \R^2\\
            \beta v_t-\kappa v_{tt}-v_{xx}&=&v(r_v-\gamma_v(u+v))+\mu u-\mu v  \quad \text{ in } \R^2,
        \end{array} 
    \right.
\end{equation}
with $ \kappa\geq 0 $ and $ \beta\geq 0 $. Let also $ b\geq a_1 $ and $ \Phi^b $ the solution to \eqref{eq:stationaryDirichlet}.

Then there exists a constant $ \alpha_0=\alpha_0(\minmu, \maxgamma, \lambda_1^b, C^b) >0$ such that if
 $ 0 < \alpha < \alpha_0 $ and 
\begin{equation}\label{tzero}
\forall x\in(-b,b),\; \alpha\Phi^b(x)<\vectorize{u(0, x) \\ v(0, x)},
 \end{equation}
 then 
$$
\forall t>0,\forall x\in(-b, b),\; \alpha\Phi^b(x)\leq\vectorize{u(t, x) \\ v(t, x)}.
$$
\end{lem}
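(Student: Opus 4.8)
The plan is to re-run the proof of Lemma~\ref{lem:infpositiveinfnotsosmall} with a barrier adapted to the forward-in-time statement: instead of the time-symmetric bump $\alpha(1-\eta t^2)\Phi^b$, I would use a barrier that is \emph{affine and decreasing in $t$}, so that the comparison can be carried out on the half-cylinder $[0,+\infty)\times[-b,b]$, the slice $t=0$ playing the role of an initial boundary on which \eqref{tzero} already forces a strict ordering. Precisely, for $\eta>0$ set
$$
\vectorize{U^{\eta}(t,x)\\ V^{\eta}(t,x)}:=\alpha(1-\eta t)\vectorize{\varphi^b(x)\\ \psi^b(x)},\qquad (t,x)\in[0,+\infty)\times[-b,b],
$$
and let $\alpha_*=\alpha_*(\eta):=\sup\{\alpha'\in[0,\alpha]:\ \alpha'(1-\eta t)\Phi^b(x)\leq(u(t,x),v(t,x))\ \text{on}\ [0,+\infty)\times[-b,b]\}$. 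If $\alpha_*(\eta)=\alpha$ for every $\eta>0$, then letting $\eta\to 0^+$ pointwise gives $\alpha\Phi^b(x)\leq(u(t,x),v(t,x))$ for all $t\geq 0$ and $x\in[-b,b]$, which is (more than) the claim; so the whole point is to rule out $\alpha_*(\eta)<\alpha$.

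The supremum is well behaved despite the unbounded time interval: for $t\geq 1/\eta$ the barrier is $\leq 0<u,v$, so any violation of the ordering is confined to the \emph{compact} cylinder $[0,1/\eta]\times[-b,b]$, on which $u$ and $v$, being continuous and positive, are bounded below by a positive constant. This shows at once that $\alpha_*(\eta)>0$ and that at $\alpha'=\alpha_*(\eta)$ the ordering holds with equality at some point $(t_0,x_0)$ (otherwise one could slightly increase $\alpha_*$, using the uniform gap on the compact cylinder and the fact that for $t\geq1/\eta$ the barrier stays $\leq0$). If moreover $\alpha_*<\alpha$, then on $\{t=0\}$ one has $\alpha_*\varphi^b(x)\leq\alpha\varphi^b(x)<u(0,x)$ (by \eqref{tzero}, together with $u,v>0$ at $x=\pm b$) and likewise for $v$, so contact is impossible at $t=0$; it is impossible at $x=\pm b$ because $\Phi^b$ vanishes there while $u,v>0$; and it is impossible for $t\geq 1/\eta$. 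Hence $(t_0,x_0)\in(0,1/\eta)\times(-b,b)$ is an interior minimum of, say, $u-U^{\eta}$ with zero value, so $(u-U^{\eta})_t=0$, $(u-U^{\eta})_{tt}\geq 0$, $(u-U^{\eta})_{xx}\geq 0$ there.

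Plugging this into \eqref{blabla2}, using \eqref{eq:stationaryDirichlet} in the form $-\varphi^b_{xx}=(r_u-\mu+\lambda_1^b)\varphi^b+\mu\psi^b$, and writing $u_0:=u(t_0,x_0)=\alpha_*(1-\eta t_0)\varphi^b(x_0)$, $v_0:=v(t_0,x_0)$, one obtains at $(t_0,x_0)$, exactly as in \eqref{twocases},
$$
0\ \geq\ u_0\bigl(-\lambda_1^b-\gamma_u(u_0+v_0)\bigr)+\mu\bigl(v_0-\alpha_*(1-\eta t_0)\psi^b(x_0)\bigr)+\alpha_*\beta\eta\,\varphi^b(x_0).
$$
The essential point is that, because the barrier is affine in $t$, it contributes no second-order $t$-term, so the $\kappa$-term drops out and the only leftover perturbation is $\alpha_*\beta\eta\varphi^b(x_0)$, which is $\geq 0$ precisely because $\beta\geq 0$; in contrast with the $t^2$-barrier of Lemma~\ref{lem:infpositiveinfnotsosmall}, there is neither a $-\kappa$ error term nor any need to take $\eta$ small to absorb it. Dropping this nonnegative term and running the very same dichotomy as there ($v_0\leq 2\alpha_*(1-\eta t_0)\psi^b(x_0)$ versus the reverse, using $\psi^b(x_0)/\varphi^b(x_0)\leq C^b$, $u_0\leq\alpha_*\leq\alpha<\alpha_0$, and $\gamma_u u_0\leq\minmu/2$) yields $u_0\geq\dfrac{-\lambda_1^b}{(1+2C^b)\maxgamma}$. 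Since $u_0\leq\alpha_*\leq\alpha$, this forces $\alpha\geq\dfrac{-\lambda_1^b}{(1+2C^b)\maxgamma}$, a contradiction as soon as $\alpha_0$ is taken to be the constant of Lemma~\ref{lem:infpositiveinfnotsosmall} (the extra factor $2$ there leaves room to spare); the symmetric case of contact in the $v$-component is handled identically. This excludes $\alpha_*<\alpha$, and the proof is complete.

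The step I expect to be the real work is not the computation — which is literally the one in Lemma~\ref{lem:infpositiveinfnotsosmall} — but setting it up over an unbounded time interval: one must pick the barrier so that it drops below $u$ and $v$ as $t\to+\infty$ (hence the affine-decreasing shape rather than a bump), and one must combine the strict ordering at $t=0$ given by \eqref{tzero} with the positivity of $u,v$ on compact cylinders to guarantee that $\alpha_*$ is positive and that the contact point is genuinely interior. Once that is in place, the only new ingredient over Lemma~\ref{lem:infpositiveinfnotsosmall} is the remark that $\beta\geq 0$ keeps the leftover first-order term on the favourable side.
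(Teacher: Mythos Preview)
Your proof is correct and follows essentially the same route as the paper: the same affine-in-$t$ barrier $\alpha(1-\eta t)\Phi^b$, the same interior touching-point computation leading to the inequality $0\geq u_0(-\lambda_1^b-\gamma_u(u_0+v_0))+\mu(v_0-\zeta_v)$ after dropping the nonnegative $\beta$-term, and the same dichotomy. The only difference is cosmetic: the paper fixes $\alpha$ and slides $\eta$ downward (defining $\eta_0:=\inf\{\eta:\text{ordering holds on }[0,\infty)\times[-b,b]\}$ and showing $\eta_0\leq 0$ directly), whereas you fix $\eta>0$, slide $\alpha'\uparrow\alpha$, and then let $\eta\to 0$.
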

\begin{proof}
Let
$$
 0<\alpha<\alpha_0:=\min\left(1, \frac{-\lambda_1^b}{2(1+2C^b)\maxgamma}, \frac {\minmu}{2\maxgamma}\right) 
$$
and assume \eqref{tzero}. For $\eta >0$ we define
$$
   \zeta(t, x)= \left(\begin{matrix} \zeta_u(t,x) \\ \zeta_v(t,x)\end{matrix}\right):=\alpha(1-\eta t)\left(\begin{matrix}\varphi^b(x)\\ \psi^b(x)\end{matrix}\right).
$$
From \eqref{tzero}, we can define
$$
	\eta_0:=\inf\left\{\eta \in \R: \forall t\geq 0, \forall x\in [-b, b], \left(\begin{matrix} u(t,x) \\ v(t,x)\end{matrix}\right)\geq\zeta (t,x) \right\}.
$$
Assume by contradiction that $ \eta_0>0 $. Then there exists $ t_0>0 $ and $ x_0\in(-b, b) $ such that, say,  $ u(t_0, x_0)=\zeta_u(t_0, x_0) $.  
Then at point $ (t_0, x_0)$  we have
$$
0\geq \beta (u-\zeta_u)_t-\kappa (u-\zeta_u)_{tt} -(u-\zeta_u)_{xx}=u(r_u-\gamma_u(u+v))+\mu(v-u)+{\zeta_u}_{xx}+\beta\alpha\eta\varphi^b.
$$
Using \eqref{eq:stationaryDirichlet} and $u(t_0,x_0)=\alpha (1-\eta _0t_0)\varphi ^b(x_0)$, we end up with
\begin{equation}\label{plusgrandzero}
	0\geq u_0(-\lambda_1^b-\gamma_u(x_0)(u_0+v_0))+\mu(x_0)(v_0-\zeta_v(t_0,x_0)),
\end{equation}
with the notations $u_0=u(t_0,x_0)$, $v_0=v(t_0,x_0)$ and thanks to $\beta \geq 0$.
Now two cases may occur.

$\bullet$ Assume first  that $ v_0\leq 2\zeta_v(t_0,x_0)$. Then $ v_0\leq 2\frac{\zeta_v(t_0,x_0)}{\zeta_u(t_0,x_0)}\zeta_u(t_0,x_0)\leq 2C^b\zeta_u(t_0,x_0)=2C^bu_0 $, so that \eqref{plusgrandzero} yields (recall that $v_0\geq \zeta _v (t_0,x_0)$)
$$
	\gamma_u(x_0)(1+2C^b)u^2_0\geq \gamma_u(x_0)(u_0+v_0)u_0\geq -\lambda_1^bu_0.
$$
As a result $u_0>\alpha_0$, which is  a contradiction. 

$\bullet$  Assume now that $ v_0 \geq 2\zeta_v(t_0,x_0) $. Then we deduce from \eqref{plusgrandzero} that
\begin{eqnarray*}
    \gamma_u(x_0)u^2_0&\geq& -\lambda_1^bu_0+v_0\left(\frac{\mu(x_0)}{2}-\gamma_u(x_0) u_0\right) +\frac{\mu(x_0)}{2}(v_0-2\zeta_v (t_0,x_0))\\
	&\geq & -\lambda_1^bu_0+\frac 12\mu(x_0)(v_0-2\zeta_v(t_0,x_0)),
\end{eqnarray*}
since $u_0\leq \alpha _0\leq \frac{\mu ^0}{2\gamma ^\infty}$. As a result $ u_0\geq \frac{-\lambda_1^b}{\gamma ^\infty}>\alpha _0 $, which is also a  contradiction.

 Thus $ \eta_0\leq 0 $ and in particular
$$
	\forall t>0,\forall x\in(-b, b),\; \left(\begin{matrix}u(t, x) \\ v(t, x)\end{matrix}\right)\geq \alpha\left(\begin{matrix} \varphi^b(x) \\ \psi^b(x) \end{matrix}\right),
$$
which concludes the proof of Lemma \ref{lem:forward}.
\end{proof}

\subsection{Proof of Theorem \ref{thm:existencepuls}}\label{ss:exist}

In this subsection, we prove that a well-chosen series of solutions to equation \eqref{pb:towards-epsilon-0-parabolic} cannot converge, as $\ep \to 0$, to a standing wave ($c=0$). In other words, we prove Theorem \ref{thm:existencepuls}, making a straightforward use of the crucial Lemma \ref{lem:cnotto0}. The rough idea of the proof of Lemma \ref{lem:cnotto0} is that a standing wave cannot stay in the neighborhood of 0 for a long time. Hence the normalization allows us to prevent a sequence of 
solutions from converging to a standing wave, provided $\nu$ is chosen small enough. Notice also that the interior gradient estimate for elliptic systems of Lemma \ref{lem:bernsteinestimate} will be used.

In the sequel we select $ a_1>a_0^* $ as in  \eqref{eq:defa1}, recall that $ \lambda_1^{a_1} $ denotes the eigenvalue of problem \eqref{eq:stationaryDirichlet} in the
domain $ (-a_1, a_1) $, and  define 
$$
    \nu^* := \frac 12\min \left(\nu_0, \underline \nu\right)>0,
$$
where $ \underline \nu:=\alpha_0\underset{x\in(-a_0^*, a_0^*)}\inf\min(\varphi^{a_1}(x), \psi^{a_1}(x)) $, with $ \alpha_0>0 $ the constant in Lemma \ref{lem:infpositiveinfnotsosmall} in the domain $ (-a_1, a_1) $.

\begin{lem}[Nonzero limit speed]\label{lem:cnotto0}
Let $ (\varepsilon_n, c_n, u^n(t,x), v^n(t,x)) $ be a sequence such that $\varepsilon_n>0 $, $ \varepsilon_n\to 0$,  $c_n\neq 0$, $ (u^n, v^n) $ is a positive solution to problem \eqref{pb:towards-epsilon-0-parabolic} with $ \varepsilon=\varepsilon_n$, $c=c_n$, $ 0<\nu< \nu^* $ and $ a_0>a_1$. Then 
\begin{equation}
\underset{n\to\infty}{\liminf}\,c_n>0.
\end{equation}
\end{lem}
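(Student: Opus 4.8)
The plan is to argue by contradiction: suppose that along the sequence $(\varepsilon_n, c_n, u^n, v^n)$ one has $c_n \to 0$ (up to extraction). The profiles $(u^n, v^n)$ are bounded by $C$ in $\Cont(\R^2)$, and because the coefficient $\frac{\varepsilon_n}{c_n^2}$ of $u_{tt}$ in \eqref{pb:towards-epsilon-0-parabolic} is a priori uncontrolled, I will \emph{not} try to pass to the limit in \eqref{pb:towards-epsilon-0-parabolic} directly. Instead I will work with the original $\ep$-regularized elliptic system \eqref{pb:towards-epsilon-0} in the $(s,x)$ variables, where the operator $-\partial_{xx}-2\partial_{xs}-(1+\varepsilon_n)\partial_{ss}-c_n\partial_s$ is uniformly elliptic with coefficients converging to those of $-\partial_{xx}-2\partial_{xs}-\partial_{ss}$. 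Using the interior gradient estimate for elliptic systems (Lemma \ref{lem:bernsteinestimate}) together with standard elliptic $\Cont^{2,\alpha}$ estimates, the sequence $(u^n, v^n)$ is relatively compact in $\Cont^2_{loc}$, so up to extraction $(u^n,v^n) \to (u^\infty, v^\infty)$, a nonnegative $L$-periodic-in-$x$ classical solution of the \emph{standing} problem
\begin{equation*}
-u^\infty_{xx}-2u^\infty_{xs}-u^\infty_{ss}=u^\infty(r_u-\gamma_u(u^\infty+v^\infty))+\mu v^\infty-\mu u^\infty
\end{equation*}
and the analogous equation for $v^\infty$, still satisfying the normalization $\sup_{x-ct\in(-a_0,a_0)}(u^\infty+v^\infty)=\nu$ carried through the limit (here $c=0$, so the constraint reads $\sup_{s\in(-a_0,a_0),x\in\R}(u^\infty(s,x)+v^\infty(s,x))=\nu$, and by \eqref{eq:realnorm} the $\sup$ over $s=0$, $x\in(-a_0,a_0)$ also equals $\nu$). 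By the strong maximum principle applied as in Lemma \ref{lem:firsthomotopyaprioriestimates} item \ref{item:firsthomotopypositivity}, either $(u^\infty,v^\infty)\equiv(0,0)$ or $(u^\infty,v^\infty)>(0,0)$ everywhere.

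The contradiction will come from two competing facts about $(u^\infty, v^\infty)$. On the one hand, the normalization forces $u^\infty+v^\infty$ to attain the value $\nu$ somewhere in $\{0\}\times(-a_0,a_0)$, so $(u^\infty,v^\infty)\not\equiv(0,0)$, hence $(u^\infty,v^\infty)>(0,0)$ in $\R^2$. On the other hand I claim $(u^\infty, v^\infty)$ must in fact be \emph{bounded below} on the whole strip $\R\times(-a_1,a_1)$: indeed, fix any compact $[-T,T]\times[-a_1+\delta,a_1-\delta]$; by translating in $s$ (which is legitimate since the limit equation is autonomous in $s$ and the bound $\sup_{(-a_0,a_0)\times\R}(u^\infty+v^\infty)\le\nu$ is translation-invariant in $s$ only over the fixed window, so I should instead translate in the variable conjugate to periodicity, i.e. use the $x$-periodicity and compactness) --- more carefully, since $(u^\infty, v^\infty)$ is continuous, positive, and $L$-periodic in $x$, on each slab $\R\times[-a_1+\delta, a_1-\delta]$ it has a positive infimum \emph{provided} it does not decay as $s\to\pm\infty$. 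This is the delicate point. I would control the behaviour as $s\to\pm\infty$ by a Harnack-type / sliding argument: since the equation is autonomous in $s$, translates $(u^\infty(\cdot+\sigma,\cdot),v^\infty(\cdot+\sigma,\cdot))$ converge (along subsequences $\sigma\to\pm\infty$) to entire solutions of the same limit system which are still positive (strong maximum principle) and bounded, hence bounded below on $\R\times(-a_1,a_1)$ by Harnack; quantitatively this shows $\inf_{\R\times(-a_1,a_1)}\min(u^\infty,v^\infty)>0$.

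Once $\inf_{\R\times(-a_1,a_1)}\min(u^\infty,v^\infty)>0$ is established, I apply Lemma \ref{lem:infpositiveinfnotsosmall} with $\kappa=\beta=0$, $b=a_1$, to conclude
\begin{equation*}
\forall(t,x)\in\R\times(-a_1,a_1),\quad \vectorize{u^\infty(t,x)\\v^\infty(t,x)}\geq\alpha_0\Phi^{a_1}(x),
\end{equation*}
where $\alpha_0$ is the constant of that lemma (note: the variable called $t$ there is our $s$). Restricting to $x\in(-a_0^*,a_0^*)$ and using $a_0>a_1>a_0^*$, this gives $u^\infty(s,x)+v^\infty(s,x)\geq \alpha_0\bigl(\varphi^{a_1}(x)+\psi^{a_1}(x)\bigr)\geq 2\underline\nu$ for all $s\in\R$, $x\in(-a_0^*,a_0^*)$. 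In particular $\sup_{s\in(-a_0,a_0),\,x\in\R}(u^\infty+v^\infty)\geq 2\underline\nu\geq 2\nu^*>\nu$, contradicting the normalization $\sup_{(-a_0,a_0)\times\R}(u^\infty+v^\infty)=\nu$ since $\nu<\nu^*$. This contradiction shows $\liminf_n c_n>0$, proving Lemma \ref{lem:cnotto0}.

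\textbf{Main obstacle.} The routine parts are the compactness (Bernstein estimate plus Schauder) and the final arithmetic with $\nu^*$, $\underline\nu$. The genuinely delicate step is showing that the limiting standing solution $(u^\infty,v^\infty)$, which a priori is only known to be positive, is in fact \emph{uniformly} bounded below on the slab $\R\times(-a_1,a_1)$ — i.e. ruling out decay as $s\to\pm\infty$ — since without a comparison principle for the full nonlinear system one cannot simply trap it between a sub- and supersolution. The clean way around this is exactly the content of Lemma \ref{lem:infpositiveinfnotsosmall}: that lemma is \emph{designed} to upgrade ``positive infimum on the slab'' to ``$\geq\alpha_0\Phi^b$'', so what I really need is the a priori positive infimum, which I get from a sliding/Harnack argument on $s$-translates using only that the limit equation is autonomous in $s$, that positivity is preserved under the strong maximum principle, and that the uniform bound $\|(u^\infty,v^\infty)\|_\infty\le C$ passes to all $s$-limits. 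I expect this to be the heart of the argument and the place where one must be most careful.
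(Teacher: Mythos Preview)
Your strategy of returning to the $(s,x)$ variables and passing to the limit there does not work as written, and the gap is structural rather than cosmetic. The limit operator $L_0=-\partial_{xx}-2\partial_{xs}-\partial_{ss}=-(\partial_x+\partial_s)^2$ is \emph{degenerate}, and this spoils every step of your plan. First, the family $L_{\varepsilon_n}$ has ellipticity constant of order $\varepsilon_n\to 0$, so standard Schauder does \emph{not} give $\mathbf C^{2,\alpha}$ bounds uniform in $n$; and Lemma~\ref{lem:bernsteinestimate} is stated for operators of the form $-\kappa\partial_{yy}-\partial_{xx}+\partial_y$, which is precisely the $(t,x)$ form of the equation (with $\kappa=\varepsilon_n/c_n^2$), not the $(s,x)$ form --- it controls $u_x$ and $\sqrt{\kappa}\,u_t$, not $u_s$. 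Second, even granting a limit $(u^\infty,v^\infty)$ solving $-u^\infty_{xx}-2u^\infty_{xs}-u^\infty_{ss}=\cdots$, this is \emph{not} the equation $\beta u_t-\kappa u_{tt}-u_{xx}=\cdots$ that Lemma~\ref{lem:infpositiveinfnotsosmall} and Lemma~\ref{lem:forward} treat: your proposed choice $\kappa=\beta=0$ would mean $-u_{xx}=\cdots$, but the cross term $-2u_{xs}$ and the term $-u_{ss}$ are still there and cannot be dropped. Third, your Harnack/translate argument for the positive infimum on $\R\times(-a_1,a_1)$ breaks down too: the strong maximum principle and Harnack inequality fail for the degenerate $L_0$ (it is a one-dimensional operator along $x+s=\text{const}$), and in any case $s$-translates can perfectly well converge to the trivial solution since the normalization window $(-a_0,a_0)$ does not follow the shift.

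The paper avoids all of this by staying in $(t,x)$ variables and splitting on the behaviour of $\kappa_n:=\varepsilon_n/c_n^2$ (the three cases $\kappa_n\to\infty$, $\kappa_n\to\kappa\in(0,\infty)$, $\kappa_n\to 0$, the last requiring exactly Lemma~\ref{lem:bernsteinestimate}). In each case the limit equation is genuinely of the form $\beta u_t-\kappa u_{tt}-u_{xx}=\cdots$ with $\beta\ge 0$, so Lemma~\ref{lem:forward} applies and, after a large positive time shift, yields a solution with positive infimum on $\R\times(-a_1,a_1)$; Lemma~\ref{lem:infpositiveinfnotsosmall} then gives the contradiction with the normalization, exactly as in your final paragraph. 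The trichotomy on $\kappa_n$ is not a technicality --- it is what produces a non-degenerate limit problem to which the two key lemmas can be applied.
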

\begin{proof}
Assume by contradiction that there is a sequence as in Lemma \ref{lem:cnotto0} with 
$\lim\,c_n=0$. Define the sequence $ \kappa_n := \frac{\varepsilon_n}{c_n^2} >0$ which, up to an extraction,  tends  to $ +\infty $, or to some $ \kappa\in(0, +\infty) $ or to $ 0 $. In each case we 
are going to construct a couple of functions $ (u,v) $ that shows a contradiction. We refer to  \cite{Ber-Ham-02} or to
\cite{Ber-Ham-Roq-05-n2} for a similar trichotomy.
\medskip

\noindent \textbf{Case 1: $\kappa_n\to +\infty $.} Defining  $ (\tilde u^n, \tilde v^n)(t,x):=(u^n, v^n)(\sqrt{\kappa _n}t,x) $, problem \eqref{pb:towards-epsilon-0-parabolic} is recast
\begin{equation}\label{kappainfini}
\left\{\begin{array}{l}
\begin{array}{rcl}
-u^n_{tt}-u^n_{xx}+\frac{1}{\sqrt{\kappa_n}}u^n_{t}&=&u^n(r_u-\gamma_u(u^n+v^n))+\mu v^n-\mu u^n \\
-v^n_{tt}-v^n_{xx}+\frac{1}{\sqrt{\kappa_n}}v^n_{t}&=&v^n(r_v-\gamma_v(u^n+v^n))+\mu u^n-\mu v^n \\
\end{array} \\
\underset{x-{\sqrt{\varepsilon_n}}t\in (-a_0, a_0)}{\sup}\,u^n(t,x)+v^n(t,x) = \nu,
\end{array}\right.
\end{equation}
where we have dropped the tildes. From  standard elliptic estimates, this sequence converges, up to an extraction, to a classical nonnegative solution $ (u,v) $ of
\begin{equation} \label{eq:limitctozerobigcrespepsilon}
    \left\{
        \begin{array}{rcl}
            -u_{tt}-u_{xx}&=&u(r_u-\gamma_u(u+v))+\mu v-\mu u \\
            -v_{tt}-v_{xx}&=&v(r_v-\gamma_v(u+v))+\mu u-\mu v, \\
        \end{array}
    \right.
\end{equation}
and since $ (u^n, v^n) $ satisfies the third equality in \eqref{kappainfini} together with \eqref{eq:realnorm}, $ (u,v) $ satisfies $ \underset{(t, x)\in\mathbb R\times (-a_0, a_0)}{\sup}(u+v)= \nu $. In particular, $(u,v)$ is nontrivial and thus positive by the strong maximum principle.

Now, applying Lemma \ref{lem:forward} to $ (u, v) $ with $ \alpha:=\frac 12\min\left(\underset{x\in(-a_0, a_0)}\inf(u(0, x), v(0,x)), \alpha_0\right)>0$, we get
$$
\forall t>0, \forall x\in (-a_0, a_0), \vectorize{u(t, x) \\ v(t, x)}\geq\alpha\Phi^{a_0}(x).
$$
Next, thanks to standard elliptic estimates, the sequence
$$
 (u^n(t, x), v^n(t, x)):=(u(t+n, x), v(t+n, x))
 $$
  converges, up to an extraction, to a solution $ (u,v) $ of
\eqref{eq:limitctozerobigcrespepsilon} --- that we denote again by $(u,v)$--- which satisfies
\begin{equation}\label{eq1}
\underset{(t, x)\in\mathbb R\times (-a_0, a_0)}\sup(u+v)=\nu,
\end{equation}
and
$$    \forall (t,x)\in\mathbb R\times (-a_0, a_0), \vectorize{u(t, x)\\v(t, x)}\geq\alpha\Phi^{a_0}(x). $$
In particular, since $a_0>a_1$, the latter implies
\begin{equation}\label{eq2}
 \underset{(t,x)\in \mathbb R\times (-a_1, a_1)}\inf\min(u,v)>0.
 \end{equation}

\medskip
\noindent \textbf{Case 2: $ \kappa_n\to \kappa\in (0, +\infty) $.} Thanks to  standard elliptic estimates, the sequence $ (u^n, v^n) $ converges, up to an extraction, to a solution $ (u,v) $ of 
\begin{equation}\label{eq:ctozerocase2}
\left\{
    \begin{array}{rcl}
        -\kappa u_{tt}-u_{xx}+u_t&=&u(r_u-\gamma_u(u+v))+\mu v-\mu u \\
        -\kappa v_{tt}-v_{xx}+v_t&=&v(r_v-\gamma_v(u+v))+\mu u-\mu v,
    \end{array}
\right.
\end{equation}
and since $ (u^n, v^n) $ satisfies the third equality in \eqref{pb:towards-epsilon-0-parabolic} together with \eqref{eq:realnorm}, $ (u,v) $ satisfies  $ \underset{(t, x)\in\mathbb R\times (-a_0, a_0)}{\sup}(u+v)=\nu $. In particular,  $(u,v)$ is nontrivial and thus positive by the strong maximum principle.

Now, using Lemma \ref{lem:forward} and a positive large shift in time exactly as in Case 1, we end up with a   solution $ (u,v) $ to
\eqref{eq:ctozerocase2} which satisfies \eqref{eq1} and \eqref{eq2}.

\medskip

\noindent  \textbf{Case 3: $ \kappa_n\to 0 $.} In this case, the elliptic operator becomes degenerate as $n\to\infty$, so that we cannot use the standard elliptic theory. The idea is then to use a Bernstein interior gradient estimate for elliptic systems that we present and prove in Appendix \ref{s:bernstein}.

Applying Lemma \ref{lem:bernsteinestimate} to the series $ (u^n, v^n)$ solving \eqref{pb:towards-epsilon-0-parabolic}, we get a uniform $ L^\infty $ bound for $ (u_x^n, v_x^n)$. 
Furthermore by differentiating \eqref{pb:towards-epsilon-0-parabolic} with respect to $x$, we see that $(u^n_x,v^n_x)$ solves a system for which Lemma \ref{lem:bernsteinestimate} still applies. As a result, we get a uniform $ L^\infty $ bound  for $ (u_{xx}^n ,v^n_{xx}) $.  

Let us show that there is also a uniform $L^\infty$ bound for $ (u^n_t, v^n_t) $.  From the uniform bounds found above, we can write
\begin{equation*}
u^n_t-\kappa_n u^n_{tt} = F^n(t,x). 
\end{equation*}
Let $F:=\max(1,\sup _n \Vert F^n\Vert _{L^\infty(\R ^2)})<+\infty$. Assume by contradiction that there is a point $(t_0,x_0)$ where $ u^n_t(t_0,x_0)> 2F $. From the above equation we deduce that $u^n_t(t,x_0)>2F$ remains valid for $t\geq t_0$, and thus  
\begin{equation*}
\kappa_n u^n_{tt}(t,x_0)> F, \quad \forall t\geq t_0.
\end{equation*}
Integrating twice, we get
\begin{equation*}
u^n(t, x_0)\geq F (2(t-t_0)+\frac{1}{2\kappa_n}(t-t_0)^2) -\Vert u^n\Vert_{L^\infty}, \quad \forall t\geq t_0.
\end{equation*}
Letting $t\to \infty$ we get that $u^n$ is unbounded, a contradiction. Thus, $ u^n_t(t, x)\leq 2F$ for any $ (t, x)\in\mathbb R^2$ and, in a straightforward way, $\vert u^n_t(t,x)\vert, \vert v^n_t(t,x)\vert \leq 2F$ for any $(t,x)\in\R ^2$.

Since we have uniform $L^\infty$ bounds for $ (u^n, v^n) $, $(u^n_x,v^n_x)$ and $(u^n_t,v^n_t)$, there are $u$ and $v$ in $H^1_{loc}(\R ^2)$ such that, up to a subsequence,
$$
(u^n, v^n) \to (u,v) \text{ in  $L^{\infty}_{loc}(\R ^{2})$},\quad 
 (u_x^n, v_x^n,u_t^n, v_t^n)\rightharpoonup (u_x, v_x,u_t, v_t) \text{  in  $L^2_{loc}(\R^2) $ weak}.
$$
As a result, letting $n\to \infty$ into \eqref{pb:towards-epsilon-0-parabolic} yields
\begin{equation} \label{eq:limitctozeroparabolic}
\left\{
\begin{array}{rcl}
u_t-u_{xx}&=&u(r_u-\gamma_u(u+v))+\mu v-\mu u \\
v_t-v_{xx}&=&v(r_v-\gamma_v(u+v))+\mu u-\mu v \\
\end{array} 
\right.
\end{equation}
in a weak sense. From parabolic regularity, $ (u,v) $ is actually a classical solution to \eqref{eq:limitctozeroparabolic}. Since the convergence occurs locally uniformly \eqref{eq:realnorm} and since $ (u^n, v^n) $ satisfies the third equality in \eqref{pb:towards-epsilon-0-parabolic} together with \eqref{eq:realnorm}, $ (u,v) $ satisfies  $ \underset{(t, x)\in\mathbb R\times (-a_0, a_0)}{\sup}(u+v)=\nu $. In particular,  $(u,v)$ is nontrivial and thus positive by the strong maximum principle.

Now, using Lemma \ref{lem:forward} and a positive large shift in time as in Case 1 (parabolic estimates replacing elliptic estimates), we end up with a   solution $ (u,v) $ to
\eqref{eq:limitctozeroparabolic} which satisfies \eqref{eq1} and \eqref{eq2}.

\medskip

 \noindent \textbf{Conclusion.} In any of the three above cases, we have constructed a classical solution $ (u,v) $ to ($\beta\geq 0$, $\kappa \geq 0$)
$$
\left\{
\begin{array}{rcl}
    \beta u_t-\kappa u_{tt}-u_{xx}&=&u(r_u-\gamma_u(u+v))+\mu v-\mu u \\
    \beta v_t-\kappa v_{tt}-v_{xx}&=&v(r_v-\gamma_v(u+v))+\mu u-\mu v ,\\
\end{array} 
\right.
$$
which satisfies \eqref{eq1} and \eqref{eq2}. Applying Lemma \ref{lem:infpositiveinfnotsosmall}, we find that (recall that $a_1>a_0^*$)
$$
\underset{\mathbb R\times(-a_0^*, a_0^*)}\inf(u,v)\geq\alpha_0  \underset{(-a_0^*, a_0^*)}\inf(\varphi^{a_1},\psi^{a_1})=\underline\nu.
$$
But, since $a_0>a_0^{*}$ the above implies
$$
\underset{\mathbb R\times (-a_0, a_0)}\sup(u+v)\geq 2 \underset{\mathbb R\times(-a_0^*, a_0^*)}\inf(u,v)\geq 2\underline \nu>\nu ^{*}>\nu,
$$
which contradicts \eqref{eq1}. Lemma \ref{lem:cnotto0} is proved.\end{proof}

We are now in the position to prove Theorem \ref{thm:existencepuls}.

\begin{proof}[Proof of Theorem \ref{thm:existencepuls}]
From the beginning of Section \ref{s:fronts} and Lemma \ref{lem:cnotto0} we can consider a sequence $ (\varepsilon_n, c_n, u^n(t,x), v^n(t,x)) $  such that $\varepsilon_n>0 $, $ \varepsilon_n\to 0$,  $0<c_n\leq \bar c^{\ep _n}+\ep_n$, $ (u^n, v^n) $ is a positive solution to problem \eqref{pb:towards-epsilon-0-parabolic} with $ \varepsilon=\varepsilon_n$, $c=c_n$, $ \nu< \nu^* $ and $ a_0>a_1$, satisfying the constraint \eqref{eq:proppuls}, and the crucial  fact
\begin{equation}
\underset{n\to\infty}{\lim}\,c_n>0.
\end{equation}
Notice that, as a by-product,  
this shows that  $\bar c^0:= \lim_{\varepsilon\to 0}\bar c^\varepsilon>0 $ (see Lemma \ref{lem:speed}).  We can now repeat the argument in the proof of Lemma \ref{lem:cnotto0} Case 3 and extract a sequence $ (u^n, v^n) $ which converges to a 
classical solution $(u,v)$ of equation 
\eqref{eq:pb-parabolic-line}, satisfying the 
normalization
\begin{equation*}
\pulsenorm{u+v}=\nu
\end{equation*}
as well as the constraint \eqref{eq:proppuls}. Theorem \ref{thm:existencepuls} is proved.
\end{proof}

\subsection{Proof of Theorem \ref{th:pulsating}}\label{ss:proofpuls}

We are now close to conclude the proof of our main result of construction of a pulsating front, Theorem \ref{th:pulsating}. From Theorem \ref{thm:existencepuls}, it only remains to prove the boundary conditions \eqref{weak-boundary}, namely
\begin{equation*}
    \liminf_{t\to+\infty} \vectorize{u(t,x)\\v(t,x)}>\vectorize{0 \\ 0},\quad \lim_{t\to-\infty} \vectorize{u(t,x)\\v(t,x)}=\vectorize{0 \\ 0}, \quad \text{locally uniformly w.r.t. $x$,}
\end{equation*}
to match Definition \ref{def:pul} of a pulsating front. The former is derived by another straighforward application of Lemma \ref{lem:forward}, while the latter is proved below. Hence, Theorem \ref{th:pulsating} is proved. \qed

\begin{lem}[Zero limit behavior]\label{thm:tozero} For $a_1>a_0^*$ and $\nu ^*>0$ as in subsection \ref{ss:exist}, let $c>0$ and $(u,v)$ be as in Theorem \ref{thm:existencepuls}, satisfying in particular  the normalization $ \underset{x-ct\in(-a_0, a_0)}\sup(u+v)=\nu $ with $ \nu<\nu^* $ and $ a_0>a_1 $. Then
$$
	\lim_{t\to -\infty}\max(u,v)(t,x)\to 0, \quad \text{locally uniformly w.r.t. $x$.}
$$
\end{lem}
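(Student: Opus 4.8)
The plan is to argue by contradiction, reducing matters to the two lower‑bound lemmas of subsection \ref{ss:lowerest} together with the normalization $\pulsenorm{u+v}=\nu$, which was tuned against the constant of Lemma \ref{lem:infpositiveinfnotsosmall} on $(-a_1,a_1)$ precisely so as to forbid any uniform positive lower bound for $(u,v)$ on $\R\times(-a_1,a_1)$. Note first that $(u,v)$, being a classical positive solution of \eqref{eq:pb-parabolic-line}, solves \eqref{blabla2} with $\beta=1$ and $\kappa=0$, and so does every translate $(u(\cdot+\tau,\cdot),v(\cdot+\tau,\cdot))$ since \eqref{eq:pb-parabolic-line} is autonomous in $t$; thus Lemmas \ref{lem:forward} and \ref{lem:infpositiveinfnotsosmall} apply.

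Assume the conclusion fails. Then there are $R>0$, $\rho>0$ and a sequence $t_n\to-\infty$ with $\sup_{|x|\le R}\max(u,v)(t_n,x)\ge\rho$, say attained at $x_n\in[-R,R]$, and up to a subsequence $x_n\to x_*$. Fix $b:=a_1+1$, so that $[-a_1,a_1]$ is compactly contained in $(-b,b)$. By standard parabolic estimates the translates $(u^n,v^n)(t,x):=(u(t+t_n,x),v(t+t_n,x))$, which are bounded by $C$, converge along a subsequence in $C^{1,2}_{\mathrm{loc}}(\R^2)$ to an entire classical solution $(\hat u,\hat v)$ of \eqref{eq:pb-parabolic-line}; it is nonnegative, and $\max(\hat u,\hat v)(0,x_*)\ge\rho>0$ makes it nontrivial, hence positive by the strong maximum principle. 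In particular $m:=\min\{\min(\hat u,\hat v)(0,x):x\in[-b,b]\}>0$; since $\Vert\Phi^b\Vert_{\Lp\infty}=1$, putting $\alpha:=\tfrac12\min(\alpha_*,m)$, where $\alpha_*>0$ is the constant of Lemma \ref{lem:forward} on $(-b,b)$, yields $0<\alpha<\alpha_*$ and $\alpha\Phi^b(x)<(\hat u,\hat v)(0,x)$ for all $x\in[-b,b]$, with a uniform gap.

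I would then transfer this to $(u,v)$. By the local uniform convergence, for all large $n$ one has $\alpha\Phi^b(x)<(u,v)(t_n,x)$ for every $x\in(-b,b)$, so Lemma \ref{lem:forward} applied with reference time $t_n$ yields $(u,v)(t,x)\ge\alpha\Phi^b(x)$ for all $t\ge t_n$ and $x\in(-b,b)$. Since $t_n\to-\infty$, this holds for every $t\in\R$, and restricting to $x\in[-a_1,a_1]$, where $\Phi^b$ is bounded below by a positive constant, gives $\inf_{\R\times(-a_1,a_1)}\min(u,v)>0$. Then Lemma \ref{lem:infpositiveinfnotsosmall} on $(-a_1,a_1)$ upgrades this to $(u,v)(t,x)\ge\alpha_0\Phi^{a_1}(x)$ for all $(t,x)\in\R\times(-a_1,a_1)$, with $\alpha_0$ the very constant entering the definition of $\underline\nu$; hence $\inf_{\R\times(-a_0^*,a_0^*)}\min(u,v)\ge\alpha_0\inf_{(-a_0^*,a_0^*)}\min(\varphi^{a_1},\psi^{a_1})=\underline\nu$. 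Evaluating at any $x\in(-a_0^*,a_0^*)$ and $t=x/c$, so that $x-ct=0\in(-a_0,a_0)$, we get $(u+v)(t,x)\ge2\underline\nu$, whence $\nu=\pulsenorm{u+v}\ge2\underline\nu>\nu^*>\nu$, a contradiction, and the lemma follows.

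I expect the compactness argument of the second step to be the heart of the matter: the failure of the conclusion provides largeness of $(u,v)$ at a single point of each slice $\{t=t_n\}$ only, and passing to an entire limit and invoking the strong maximum principle is what turns this into a genuine lower bound on a whole interval, after which Lemma \ref{lem:forward} propagates it forward in time and the pre‑calibrated normalization closes the argument. The minor technical point is that $\Phi^b$ vanishes at $x=\pm b$, which is why one first works on the slightly enlarged interval $(-b,b)$ before invoking Lemma \ref{lem:infpositiveinfnotsosmall}, and why the strict inequality in the hypothesis of Lemma \ref{lem:forward} is harmless here.
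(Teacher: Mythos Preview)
Your proof is correct and shares the same overall architecture as the paper's (contradiction, propagate a lower bound forward in time via Lemma \ref{lem:forward}, then contradict the normalization through Lemma \ref{lem:infpositiveinfnotsosmall}), but the decisive step is handled differently. The paper converts ``large at one point of the slice $\{t=t_n\}$'' into ``uniformly large on an interval at time $t_n+1$'' by invoking the Harnack inequality for cooperative parabolic systems \cite[Theorem 3.9]{Fol-Pol-09}, and then applies Lemma \ref{lem:forward} directly to $(u,v)$. You instead pass to an entire limit of the translates $(u(\cdot+t_n,\cdot),v(\cdot+t_n,\cdot))$ via parabolic compactness, use the strong maximum principle on the limit to get a uniform lower bound on a compact interval, and transfer this back to $(u,v)$ at time $t_n$ by local uniform convergence before applying Lemma \ref{lem:forward}. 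Your route is slightly less direct but avoids quoting a systems Harnack inequality; the paper's route is a one-shot application of Harnack and needs no limit procedure. Both close by the same contradiction with the calibration $\nu<\nu^*\le \tfrac12\underline\nu$ (the paper isolates this as a preliminary claim that $\inf_{\R\times(-a_0,a_0)}\min(u,v)=0$, whereas you fold it into the final lines).
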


\begin{proof}  We first claim that $ \underset{\mathbb R\times(-a_0, a_0)}\inf\min\, (u,v)=0 $. Indeed if this is not the case then, in particular,  $ \underset{\mathbb R\times(-a_1, a_1)}\inf\min\, (u,v)>0 $, and we derive a contradiction via  Lemma \ref{lem:infpositiveinfnotsosmall} by a straightforward adaptation of the Conclusion of the proof of Lemma \ref{lem:cnotto0}, because $ \mathbb R\times (-a_1, a_1) $ intersects $ \{(t,x): x-ct\in (-a_0, a_0) \} $.

Now let $a>a_0$ be given and assume by contradiction that there is $m>0$ and 
 a sequence $  t_n\to -\infty $ such that $ \underset{x\in(-a, a)}\sup\max \,(u,v)( t_n, x)\geq m$. Thanks to the Harnack inequality for parabolic systems, see \cite[Theorem 3.9]{Fol-Pol-09}, there is $C>0$ such that
$$
	\forall n\in\mathbb N,\quad  \inf_{x\in(-a, a)}\min\, (u,v)( t_n+1, x)\geq \frac 1C \sup_{x\in (-a, a)} \max\, (u+v)( t_n, x)\geq \frac mC.
$$
We now use our forward-in-time lower estimate, see Lemma \ref{lem:forward}, in $ (-a, a) $ and with $ \alpha:=\frac 12\min(\alpha_0, \frac mC) >0$ to get
$$
\forall n\in \mathbb N,\; \forall t> t_n+1,\; \forall x\in(-a, a),\; \vectorize{u(t,x)\\v(t,x)}\geq\alpha\vectorize{\varphi^a(x) \\ \psi^a(x)}.
$$
Since $t_n\to -\infty$ and $a>a_0$, the above implies
$$
    \underset{(t,x)\in\mathbb R\times(-a_0, a_0)}\inf\min \, (u,v)(t,x)\geq\alpha\inf_{x\in(-a_0, a_0)}(\varphi^a, \psi^a)(x)>0.
$$
This is a contradiction and the lemma is proved.
\end{proof}

\addcontentsline{toc}{section}{Appendix}

\appendix

\thereiam
\section{Topological theorems}\label{s:topo}

Let us first recall  the classical Krein-Rutman theorem. 

\begin{theo}[Krein-Rutman theorem]\label{theo:krein-rutman}
Let $E$ be a Banach space. Let $ C \subset E$ be a closed convex cone of vertex 0,
such that $ C\cap -C=\{0\}$ and $ Int\,C\neq\varnothing$.
Let $ T:E\to E $ be a linear compact operator  such that $
T(C\backslash\{0\})\subset Int\,C$.

Then, there exists $ u\in
Int\,C $ and $ \l>0 $ such that $ Tu=\l u$. Moreover,
if $ Tv=\mu v $ for some $ v\in C\backslash\{0\}$, then $
\mu=\l$. Finally, we have
\begin{equation*}
\l=\max\{|\mu|, \mu\in\sigma(T)\},
\end{equation*}
and the algebraic and geometric multiplicity of $ \l $ are
both equal to 1.
\end{theo}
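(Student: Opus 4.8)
The plan is to give the classical proof of the Krein--Rutman theorem in three steps: produce a positive eigenpair, then prove uniqueness of the positive eigenvalue together with geometric and algebraic simplicity, and finally identify $\l$ with the spectral radius $r(T)$. Two elementary facts about $C$ will be used throughout. Since $C$ is closed and convex with $C\cap(-C)=\{0\}$ (so $C\neq E$), the dual cone $C^*:=\{\ell\in E':\ell\ge 0\text{ on }C\}$ is nontrivial; and since $\mathrm{Int}\,C\neq\varnothing$ one has $C-C=E$, whence $C^*$ separates the points of $E$, and every $\ell\in C^*\setminus\{0\}$ is strictly positive at each point of $\mathrm{Int}\,C$. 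Moreover, fixing $e\in\mathrm{Int}\,C$ with $\bar B(e,\rho)\subset C$, one has $-\tfrac{\|x\|}{\rho}\,e\le x\le\tfrac{\|x\|}{\rho}\,e$ for every $x\in E$; in particular bounded (hence relatively compact) subsets of $E$ are order bounded by a multiple of $e$.

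\emph{Step 1: existence.} For $x\in C\setminus\{0\}$ we have $Tx\in\mathrm{Int}\,C$, so $Tx-tx\in C$ for all small $t>0$; set $\mu(x):=\sup\{t\ge 0:Tx-tx\in C\}$, which one checks is finite and (since $C$ is closed) attained, and is homogeneous of degree $0$. Define $\l:=\sup\{\mu(x):x\in C,\ \|x\|=1\}$. The core of the step is to show, using the compactness of $T$ together with the order bound $\mu(x)x\le Tx\le M e$ on the unit sphere of $C$, that $\l<\infty$ and that the supremum is attained at some $u\in C$ with $\|u\|=1$. Then $Tu-\l u\in C$, and it must vanish: otherwise, applying $T$ and using $T(C\setminus\{0\})\subset\mathrm{Int}\,C$ yields $T(Tu)-\l\,Tu\in\mathrm{Int}\,C$ with $Tu\in\mathrm{Int}\,C$, so $\mu(Tu)>\l$, contradicting the definition of $\l$. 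Hence $Tu=\l u$, and $u=\l^{-1}Tu\in\mathrm{Int}\,C$ with $\l>0$, which is the required eigenpair.

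\emph{Step 2: uniqueness and simplicity.} Let $Tv=\mu v$ with $v\in C\setminus\{0\}$. Then $\mu>0$ (else $\mu v=Tv$ would lie in $C\cap(-C)=\{0\}$, or $0\in\mathrm{Int}\,C$), and $v=\mu^{-1}Tv\in\mathrm{Int}\,C$. Put $s^\star:=\sup\{s>0:v-su\in C\}\in(0,\infty)$, attained. If $v-s^\star u\neq 0$, applying $T$ and dividing by $\mu$ gives $v-\tfrac{\l}{\mu}s^\star u\in\mathrm{Int}\,C\subset C$, hence $\tfrac{\l}{\mu}s^\star\le s^\star$, i.e. $\l\le\mu$; arguing symmetrically with the roles of $(u,\l)$ and $(v,\mu)$ interchanged gives $\mu\le\l$, so $\mu=\l$, and then the displayed membership becomes $v-s^\star u\in\mathrm{Int}\,C$, which contradicts maximality of $s^\star$ unless $v-s^\star u=0$. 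Thus $v\in\R u$: the eigenvectors of $T$ in $C$ are exactly the positive multiples of $u$. For a general real eigenvector $w$ for $\l$, the set $\{t\in\R:u+tw\in C\}$ is a closed interval with $0$ in its interior and not all of $\R$; at a finite endpoint $t_0$ one has $u+t_0w\in\partial C$ while $T(u+t_0w)=\l(u+t_0w)$, which forces $u+t_0w\in\mathrm{Int}\,C$ unless $u+t_0w=0$; hence $w\in\R u$, so the geometric multiplicity of $\l$ is $1$. For algebraic simplicity, suppose $(T-\l)w=u$; then $T^nw=\l^nw+n\l^{n-1}u$, and choosing $\beta>0$ with $-\beta u\le w\le\beta u$ and applying $T^n$ (which preserves the order, with $T^nu=\l^nu$) gives $|\ell(w)+\tfrac{n}{\l}\ell(u)|\le\beta\,\ell(u)$ for all $n$, impossible since $\ell(u)>0$. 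Hence $\ker(T-\l)^2=\ker(T-\l)$.

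\emph{Step 3: spectral radius.} Since $\l$ is an eigenvalue, $\l\le r(T)$. Conversely, let $\mu\in\sigma(T)$ with $\mu\neq 0$; then $\mu$ is an eigenvalue of the complexification, $Tw=\mu w$ with $0\neq w=w_1+iw_2$ ($w_1,w_2$ real). Pick $\ell\in C^*$ not vanishing on whichever of $w_1,w_2$ is nonzero, and $\beta>0$ with $-\beta u\le w_j\le\beta u$ for $j=1,2$. Applying $T^n$ yields $-\beta\l^n u\le T^nw_j\le\beta\l^n u$, hence $|\ell(T^nw)|\le 2\beta\l^n\ell(u)$; since $T^nw=\mu^nw$, this reads $|\mu|^n\,|\ell(w_1)+i\ell(w_2)|\le 2\beta\l^n\ell(u)$ with $\ell(w_1)+i\ell(w_2)\neq 0$, so $|\mu|\le\l$. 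Therefore $\l=r(T)=\max\{|\mu|:\mu\in\sigma(T)\}$. The step I expect to be the genuine obstacle is the finiteness and attainment of the supremum $\l$ in Step 1: the normalising functional need not be bounded below on near-maximising vectors, so rather than bounding $\mu(x)$ directly one passes to the limit in $\mu(x_n)x_n\le Tx_n$ using the relative compactness of $T(\bar B(0,1)\cap C)$ and the domination of all vectors by the interior point $e$ --- this is exactly where the hypothesis $\mathrm{Int}\,C\neq\varnothing$ is essential; the remaining steps are elementary order manipulations. This is classical material, and one may simply invoke the standard treatments of the Krein--Rutman theorem in nonlinear functional analysis (Deimling, Amann, Nussbaum).
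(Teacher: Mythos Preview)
The paper does not prove this theorem at all: it is stated in Appendix~\ref{s:topo} with the preamble ``Let us first recall the classical Krein--Rutman theorem,'' and no proof is given. So there is nothing to compare your argument against; the authors simply quote the result and use it.

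Your proposal is a correct outline of the standard proof. Steps~2 and~3 are complete and clean: the sliding argument for uniqueness and geometric simplicity, the growth contradiction for algebraic simplicity, and the order-domination bound $|\mu|^n|\ell(w)|\le 2\beta\l^n\ell(u)$ for the spectral radius are all the classical moves, and you have carried them out without error. You are also right that the only genuine work is in Step~1, and you have been honest about this. As written, however, Step~1 is not a proof but a pointer to one: from $\mu(x_n)x_n\le Tx_n$ and the relative compactness of $\{Tx_n\}$ you do not immediately get convergence of $x_n$ itself, and the passage from ``$\mu(x_n)\to\l$'' to ``there exists $u$ with $\mu(u)=\l$'' needs an explicit argument (e.g.\ a Schauder fixed-point on a normalised perturbation $x\mapsto T(x+\varepsilon e)/\|T(x+\varepsilon e)\|$, or a resolvent argument as $t\downarrow r(T)$). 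Since you close by invoking Deimling/Amann/Nussbaum, this is acceptable for a result the paper itself does not prove; just be aware that the sentence ``the supremum is attained'' is precisely where the compactness of $T$ and the interior point $e$ have to be combined nontrivially, and your current text does not actually do that combination.
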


We now quote some results on the structure of the solution set for nonlinear eigenvalue problems in a Banach space, more specifically when bifurcation occurs. 
For more details and proofs, we refer the reader to the works of Rabinowitz \cite{Rab-71-1, Rab-71}, Crandall and Rabinowitz \cite{Cran-Rab-1971}. See also earlier related results of Krasnosel'skii  \cite{Kra-64} and the book of Brown \cite{Brown}. 

\begin{theo}[Bifurcation from eigenvalues of odd multiplicity]\label{theo:krasnoleski-rabinowitz}
Let $ E $ be a Banach space. Let $ F:\mathbb R \times E\rightarrow
E $ be a (possibly nonlinear) compact operator such that
\begin{equation*}
 \forall \lambda\in\mathbb R,\, F(\lambda, 0)=0.
 \end{equation*}
Assume that $F$ is Fr\'echet differentiable near $ (\lambda, 0) $ with derivative $ \lambda T$. Let us define
\begin{equation*}
\mathcal S:=\overline{\{(\lambda, x)\in \mathbb R\times
E\backslash\{0\}:F(\lambda, x)=x\}}.
\end{equation*}
Let us assume that  $ \frac 1\mu\in\sigma(T) $ is of odd multiplicity.

Then there
exists a maximal connex compound $ \mathcal C_\mu \subset\mathcal
S $ such that $ (\mu, 0)\in{\mathcal C_\mu} $ and either
\begin{enumerate}
\item $ \mathcal C_\mu $ is not bounded in $ \mathbb R\times E$, or

 \item there exists $ \mu^*\neq\mu $ with $
\frac{1}{\mu^*}\in\sigma(T) $ and $ (\mu^*, 0)\in{\mathcal C_\mu}
$.
\end{enumerate}
\end{theo}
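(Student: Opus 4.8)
The plan is to run the classical Leray--Schauder degree argument of Rabinowitz. Write $\Phi(\lambda,x):=x-F(\lambda,x)$; since $F$ is compact, $\Phi(\lambda,\cdot)$ is admissible for the Leray--Schauder degree on any bounded open subset of $E$ on whose boundary it does not vanish. Note that $\lambda T$ is the Fréchet derivative of the compact map $F(\lambda,\cdot)$ at $0$, so $T$ is a compact linear operator and $\sigma(T)\setminus\{0\}$ consists of isolated eigenvalues of finite algebraic multiplicity; in particular $I-\lambda T$ is invertible for $\lambda$ in a punctured neighbourhood of $\mu$. The engine of the proof is the Leray--Schauder index formula: for such $\lambda$, $0$ is an isolated fixed point of $F(\lambda,\cdot)$ and $\deg(\Phi(\lambda,\cdot),B_r,0)=(-1)^{\beta(\lambda)}$ for small $r>0$, where $\beta(\lambda)$ is the sum of the algebraic multiplicities of the eigenvalues of $T$ lying strictly between $0$ and $1/\lambda$. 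As $\lambda$ moves past $\mu$, $\beta(\lambda)$ jumps by the algebraic multiplicity of $1/\mu$, which is odd by hypothesis; hence the Leray--Schauder index of $0$ changes sign across $\mu$.

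First I would extract the local statement. If $(\mu,0)$ were not a bifurcation point, there would be $r,\delta>0$ so that $\Phi(\lambda,\cdot)$ has no zero in $\overline{B_r}\setminus\{0\}$ whenever $|\lambda-\mu|\le\delta$; then $\lambda\mapsto\deg(\Phi(\lambda,\cdot),B_r,0)$ would be constant on $[\mu-\delta,\mu+\delta]$ by homotopy invariance, contradicting the sign change above. Hence $(\mu,0)\in\mathcal S$, and I let $\mathcal C_\mu$ be its connected component in $\mathcal S$. Since a point $(\lambda,0)$ lies in $\mathcal S$ only if it is a bifurcation point, i.e. only if $1/\lambda\in\sigma(T)$, the component $\mathcal C_\mu$ either meets $\mathbb R\times\{0\}$ solely at $(\mu,0)$, or it contains some $(\mu^*,0)$ with $\mu^*\neq\mu$ and $1/\mu^*\in\sigma(T)$ --- which is alternative (2). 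So it suffices to prove: if $\mathcal C_\mu$ is bounded, then alternative (2) holds.

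Assume $\mathcal C_\mu$ bounded and, for contradiction, that $(\mu,0)$ is the only bifurcation point on it, so $\mathcal C_\mu\cap(\mathbb R\times\{0\})=\{(\mu,0)\}$. Because $F$ sends bounded sets into relatively compact ones, $\mathcal S\cap\overline{B_\rho}$ is compact for every $\rho$; taking $\rho$ large enough that $\mathcal C_\mu\subset B_\rho$, the set $\mathcal C_\mu$ is a connected component of this compact space, hence equals the intersection of all its relatively clopen subsets containing $(\mu,0)$ (Whyburn's separation lemma). This produces a relatively clopen $V$ with $\mathcal C_\mu\subset V$ inside an arbitrarily small neighbourhood of $\mathcal C_\mu$; shrinking it, I may take $V$ compact, positively separated from $\mathcal S\setminus V$, disjoint from every other bifurcation point, and meeting $\mathbb R\times\{0\}$ only along a short interval $J\times\{0\}$ about $(\mu,0)$ with $1/\lambda\notin\sigma(T)$ for $\lambda\in\overline J\setminus\{\mu\}$. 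I then enclose $V$ in a bounded open set $\Omega\subset\mathbb R\times E$ with $\overline\Omega\cap\mathcal S=V$ (so $\partial\Omega$ carries no solution) and $\Omega\cap(\mathbb R\times\{0\})=J\times\{0\}$.

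Then comes the global degree computation. For every $\lambda$, $\Phi(\lambda,\cdot)$ does not vanish on $\partial\Omega_\lambda$, so $d(\lambda):=\deg(\Phi(\lambda,\cdot),\Omega_\lambda,0)$ is defined; by homotopy invariance $d$ is independent of $\lambda$, and $d=0$ for $\lambda$ outside the bounded $\lambda$-range of $\Omega$, whence $d\equiv0$. On the other hand, for $\lambda_\pm:=\mu\pm\epsilon\in J$ with $\epsilon$ small one shows, by excision and a careful analysis near $(\mu,0)$ --- splitting off a small ball around $0$ and comparing the two sides of $\mu$ --- that $d(\lambda_+)-d(\lambda_-)$ equals the jump in the Leray--Schauder index of the trivial solution across $\mu$, which is $\pm2$ precisely because the multiplicity of $1/\mu$ is odd. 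This contradicts $d\equiv0$, so a bounded $\mathcal C_\mu$ must return to the trivial branch at a second eigenvalue, proving the dichotomy. The main obstacle is exactly this last step: isolating the trivial branch inside $\Omega$ and showing that, across $\mu$, the contribution of the genuinely nontrivial solutions to $d$ is unchanged while the contribution of $0$ jumps by an odd amount; the construction of $\Omega$ from the separation lemma is routine but must be arranged so that $\partial\Omega$ stays solution-free and $\Omega\cap(\mathbb R\times\{0\})$ is a controlled segment.
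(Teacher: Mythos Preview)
The paper does not actually prove this theorem: it is quoted in the Appendix as a classical result, with the sentence ``For more details and proofs, we refer the reader to the works of Rabinowitz \cite{Rab-71-1, Rab-71}, Crandall and Rabinowitz \cite{Cran-Rab-1971}.'' So there is no in-paper proof to compare against.

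Your proposal is a faithful sketch of Rabinowitz's original degree-theoretic argument: the Leray--Schauder index formula giving the sign change of the local index of the trivial solution across $\mu$ when the multiplicity of $1/\mu$ is odd; the Whyburn separation lemma to isolate a bounded component $\mathcal C_\mu$ inside an open set $\Omega$ whose boundary is solution-free and whose trace on $\mathbb R\times\{0\}$ is a short interval about $\mu$; and the observation that the total degree over the slices $\Omega_\lambda$ is constant (hence zero) while the contribution of the trivial branch jumps by $\pm 2$ across $\mu$, forcing a second bifurcation point on $\mathcal C_\mu$. This is exactly the approach of the cited references, and your outline is correct. The only place where one must be careful --- and you flag it yourself --- is the construction of $\Omega$ so that $\Omega\cap(\mathbb R\times\{0\})$ is exactly a controlled interval $J\times\{0\}$ and the excision argument cleanly separates the trivial contribution from the nontrivial one; in a full write-up this is where most of the work lies, but the idea you describe is the standard one.
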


When the eigenvalue is simple, one can actually refine the above result as follows.

\begin{theo}[Bifurcation from simple eigenvalues]\label{theo:rabinowitz} Let the assumptions of Theorem \ref{theo:krasnoleski-rabinowitz} hold.
Assume further that $ \frac 1\mu\in\sigma(T) $ is simple. Let $ T^* $ be the dual
of $ T$, and $ l\in E' $ an eigenvector of $ T^* $ associated with $
\frac 1\mu $ with $ \Vert l\Vert=1 $ (recall that $ \frac 1\mu $
is of multiplicity 1 for both $ T $ and $ T^*$). Let us define
\begin{equation*}
K_{\xi, \eta}^+:=\{(\lambda, u)\in\mathbb R\times
E,|\lambda-\mu|<\xi, \langle l, u\rangle>\eta\Vert u\Vert\},
\quad K_{\xi, \eta}^-:=-K_{\xi, \eta}^+.
\end{equation*}

Then 
$\mathcal C_\mu\backslash\{(\mu, 0)\} $ contains two connex
compounds $ \mathcal C_\mu^+ $ and $ \mathcal C_\mu^- $ which
satisfy
\begin{equation*}
 \forall \nu\in\{+, -\},\forall \xi>0, \forall\eta\in(0, 1), \exists\zeta_0>0, \forall\zeta\in(0, \zeta_0),\,  (\mathcal C_\mu^\nu\cap B_\zeta)\subset K_{\xi, \eta}^\nu,
\end{equation*}
where $B_\zeta:=\{(\lambda, u)\in\mathbb R\times E, |\lambda-\mu|<\zeta,
\Vert u\Vert<\zeta\}$ is the ball of center $(\mu,0)$ and radius $\zeta$.
Moreover, both $ \mathcal C_\mu^+ $ and $ \mathcal C_\mu^- $
satisfies the alternative in Theorem
\ref{theo:krasnoleski-rabinowitz}.
\end{theo}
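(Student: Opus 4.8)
The plan is to combine a local Lyapunov--Schmidt reduction at the bifurcation point $(\mu,0)$ with the global alternative already recorded in Theorem \ref{theo:krasnoleski-rabinowitz}; this is the classical scheme of Rabinowitz \cite{Rab-71, Rab-71-1} and Crandall--Rabinowitz \cite{Cran-Rab-1971}. First I would set up the Fredholm decomposition: since $T$ is compact, $\tfrac1\mu\neq 0$ is isolated in $\sigma(T)$, and simplicity of $\tfrac1\mu$ gives the topological direct sum $E=\mathbb R x_0\oplus E_1$, where $\mathbb R x_0=\ker(I-\mu T)$ (pick $x_0$ with $\langle l,x_0\rangle=1$) and $E_1=\mathrm{Range}(I-\mu T)=\{x:\langle l,x\rangle=0\}$. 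Let $Px=\langle l,x\rangle x_0$ be the associated projection; it commutes with $T$, the subspace $E_1$ is $T$-invariant, and $(I-\lambda T)|_{E_1}$ is an isomorphism of $E_1$ for all $\lambda$ in a neighbourhood of $\mu$, uniformly. Writing $F(\lambda,x)=\lambda Tx+R(\lambda,x)$ with $\|R(\lambda,x)\|=o(\|x\|)$ uniformly near $\mu$, and $x=sx_0+w$ with $w\in E_1$, the fixed-point equation $x=F(\lambda,x)$ decouples into $(I-\lambda T)|_{E_1}\,w=(I-P)R(\lambda,sx_0+w)$ and the scalar bifurcation equation $s\bigl(1-\tfrac\lambda\mu\bigr)=\langle l,R(\lambda,sx_0+w)\rangle$.

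Next I would extract the local cone structure. From the $E_1$-equation and the uniform invertibility of $(I-\lambda T)|_{E_1}$ one gets $\|w\|\le C\|R(\lambda,x)\|=o(\|x\|)$; combined with $\|x\|\le|s|\,\|x_0\|+\|w\|$ this forces $\|x\|\le 2|s|\,\|x_0\|$ and hence $w=o(|s|)$ as $(\lambda,x)\to(\mu,0)$ along $\mathcal S$. In particular a nontrivial solution near $(\mu,0)$ has $s=\langle l,x\rangle\neq 0$ and $x=sx_0+o(|s|)$, so $x/\|x\|\to\mathrm{sgn}(s)\,x_0/\|x_0\|$, while the bifurcation equation then merely records that $\lambda\to\mu$. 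This is exactly the assertion that, given $\xi>0$ and $\eta\in(0,1)$, there is $\zeta_0>0$ with $\bigl(\mathcal S\cap B_{\zeta_0}\bigr)\setminus\{(\mu,0)\}\subset K_{\xi,\eta}^+\cup K_{\xi,\eta}^-$, and these two sets are disjoint since on them $\langle l,u\rangle$ has a definite sign ($s>0$, resp. $s<0$).

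Finally I would globalise. Theorem \ref{theo:krasnoleski-rabinowitz} furnishes a connected component $\mathcal C_\mu\subset\mathcal S$ with $(\mu,0)\in\mathcal C_\mu$; by the previous step, inside a small ball $\mathcal C_\mu\setminus\{(\mu,0)\}$ splits into a part with $s>0$ and a part with $s<0$, lying in the disjoint cones $K_{\xi,\eta}^\pm$. I would let $\mathcal C_\mu^+$ (resp. $\mathcal C_\mu^-$) be the connected component of $(\mu,0)$ in the closure of the $s>0$ (resp. $s<0$) part of $\mathcal C_\mu$; the localisation of the previous paragraph is inherited. The point requiring simplicity in an essential way is that \emph{both} $\mathcal C_\mu^+$ and $\mathcal C_\mu^-$ are nontrivial continua: one runs the Leray--Schauder degree argument of Rabinowitz separately on the half-spaces $\{\langle l,x\rangle\ge 0\}$ and $\{\langle l,x\rangle\le 0\}$ — the local index $i(I-F(\lambda,\cdot),0)$ takes opposite values for $\lambda$ slightly below and slightly above $\mu$, because the eigenvalue of $\lambda T$ crossing $1$ has multiplicity one — and couples this with a Whyburn-type connectedness lemma excluding that the bifurcating set on one side remains confined to a small ball. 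Each $\mathcal C_\mu^\pm$ is then itself a maximal continuum of nontrivial fixed points issuing from the simple eigenvalue $\tfrac1\mu$, so it satisfies the alternative of Theorem \ref{theo:krasnoleski-rabinowitz}, which completes the proof. The main obstacle is precisely this two-sidedness step: proving that the bifurcating continuum genuinely divides into two separate global branches rather than a single one leaving $(\mu,0)$ along both rays, which forces the delicate degree bookkeeping on the two half-spaces.
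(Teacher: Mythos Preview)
The paper does not give its own proof of this theorem: it is stated in the Appendix as a result quoted from the literature, with the explicit disclaimer ``For more details and proofs, we refer the reader to the works of Rabinowitz \cite{Rab-71-1, Rab-71}, Crandall and Rabinowitz \cite{Cran-Rab-1971}.'' Your sketch --- Lyapunov--Schmidt reduction to obtain the local cone structure, followed by the Leray--Schauder degree argument on the two half-spaces to force both continua to satisfy the global alternative --- is precisely the classical route taken in those cited papers, so there is nothing to compare against in the present paper itself. Your outline is faithful to the original arguments; as you yourself note, the only genuinely delicate point is the two-sidedness (that \emph{each} of $\mathcal C_\mu^\pm$ individually satisfies the alternative), and for this one does need the full degree bookkeeping of \cite{Rab-71} rather than a soft connectedness statement.
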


\section{A Bernstein-type interior gradient estimate for elliptic systems}\label{s:bernstein}

We present here some $L^\infty$ gradient estimates for regularizations of degenerate elliptic systems, which are uniform with respect to the regularization parameter $\kappa\geq 0$. The result below generalizes the result of Berestycki and Hamel \cite{Ber-Ham-05}, which is concerned with scalar equations.

\begin{lem}[Interior gradient estimates]\label{lem:bernsteinestimate}
Let $ \Omega $ be an open subset of $ \mathbb R^2$. Let $f,g:\Omega\times \R^2\to \R$ be two  $ C^1 $ functions with bounded derivatives. Let $ 0\leq \kappa\leq 1 $ and $ (u(y,x),v(y,x)) $ be a solution of the class $ C^3 $ of the system
\begin{equation}\label{eq:lembernsteinsystem}
\left\{\begin{array}{rcl}
-\kappa u_{yy}-u_{xx}+u_y&=&f(y, x, u, v) \quad \text{ in } \Omega,\\
-\kappa v_{yy}-v_{xx}+v_y&=&g(y, x, u, v) \quad \text{ in } \Omega.
\end{array}\right.
\end{equation}

Then, for all $ (y, x) \in\Omega$, 
\begin{equation*}
|u_x(y, x)|^2+|v_x(y, x)|^2+\kappa |u_y(y,x)|^2 + \kappa|v_y(y,x)|^2\leq C\left(1+\frac{1}{(dist((y,x), \partial\Omega))^2}\right)
\end{equation*}
where
$$
C=C( \Vert u\Vert_{L^\infty(B)}+\Vert v\Vert_{L^\infty(B)}, osc_{B}u, osc_{B}v, \Vert f\Vert_{C^{0, 1}(B\times[\underline u, \overline u]\times [\underline v, \overline v])},
\Vert g\Vert_{C^{0,1}(B\times [\underline u, \overline u]\times[\underline v, \overline v])}),
$$
with $ B$ the ball of center $(y, x)$ and radius $\frac{dist((y, x),\partial\Omega)}{2}$ in $\R^2$, 
$ \underline u:=\inf_{B} u $, $ \overline u := \sup _{B} u $,
$ \underline v:=\inf _{B} v $, $ \overline v := \sup _{B} v $. 
In particular, this estimate is independent on the regularization parameter $ 0\leq \kappa\leq 1$.  
\end{lem}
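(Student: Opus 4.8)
The plan is to run Bernstein's method in its localized (cutoff) form, keeping every inequality uniform with respect to the regularization parameter $\kappa\in[0,1]$. Throughout I write $\mathcal L:=-\kappa\partial_{yy}-\partial_{xx}+\partial_y$, so that $\mathcal Lu=f$ and $\mathcal Lv=g$, and note that $\mathcal L$ satisfies the weak maximum principle for every $\kappa\ge0$. It suffices to estimate $w:=|u_x|^2+|v_x|^2+\kappa|u_y|^2+\kappa|v_y|^2$ at a fixed point $z_0=(y_0,x_0)\in\Omega$: set $R:=\tfrac12\,\mathrm{dist}(z_0,\partial\Omega)$ (one may assume $R\le1$, the bound being trivial otherwise) and fix a smooth cutoff $\chi$ with $\chi\equiv1$ on $B(z_0,R)$, supported in $B:=B(z_0,2R)$, with $|\nabla\chi|\le C/R$ and $|D^2\chi|\le C/R^2$. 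The auxiliary function will be
$$P:=\chi^4 w+\Lambda(u^2+v^2)\quad\text{on }\overline B,$$
where $\Lambda=\Lambda(R)>0$ is a large constant to be chosen; I will bound $\max_{\overline B}P$ and then read off the estimate at $z_0$, where $\chi=1$.

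The core is a differential inequality for $P$. Differentiating the system in $x$ and in $y$ gives $\mathcal Lu_x=f_x+f_uu_x+f_vv_x$, $\mathcal Lu_y=f_y+f_uu_y+f_vv_y$ and likewise for $v$; combined with the identity $\mathcal L(\phi^2)=2\phi\mathcal L\phi-2(\kappa\phi_y^2+\phi_x^2)$ and using $\kappa\le1$ repeatedly (so that $\sqrt\kappa|u_y|\le w^{1/2}$, etc.), one obtains
$$\mathcal Lw\le C(1+w)-2D,\qquad D:=u_{xx}^2+v_{xx}^2+2\kappa(u_{xy}^2+v_{xy}^2)+\kappa^2(u_{yy}^2+v_{yy}^2),$$
with $C$ controlled by the $C^{0,1}$-norms of $f,g$ on $B\times[\underline u,\overline u]\times[\underline v,\overline v]$. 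Since $\mathcal L(u^2)=2uf-2(\kappa u_y^2+u_x^2)$ and similarly for $v$, one has $\mathcal L\big(\Lambda(u^2+v^2)\big)\le C\Lambda-2\Lambda w$. For the localized term use $\mathcal L(\phi\psi)=\psi\mathcal L\phi+\phi\mathcal L\psi-2\kappa\phi_y\psi_y-2\phi_x\psi_x$ together with $|\nabla(\chi^4)|\le C\chi^3/R$, $|\mathcal L(\chi^4)|\le C\chi^2/R^2$ and the Cauchy--Schwarz bound $|w_x|+\kappa|w_y|\le C\,w^{1/2}D^{1/2}$; a Young inequality absorbs the cross terms into $\tfrac12\chi^4 D+C R^{-2}\chi^2w$, leaving
$$\mathcal L(\chi^4 w)\le C\chi^4(1+w)-\tfrac32\chi^4 D+C R^{-2}\chi^2 w\le C+\big(C+C R^{-2}\big)w.$$
Adding the three contributions and choosing $\Lambda:=C+CR^{-2}+1$ yields $\mathcal LP\le 2C\Lambda-2w$ on $B$.

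To conclude, let $z^*$ be a maximum point of $P$ on the compact set $\overline B$. If $z^*\in\partial B$ then $\chi(z^*)=0$, so $\max_{\overline B} P\le\Lambda(\|u\|^2_{L^\infty(B)}+\|v\|^2_{L^\infty(B)})$; otherwise $\nabla P(z^*)=0$ and $-\kappa P_{yy}-P_{xx}\ge0$ at $z^*$, hence $\mathcal LP(z^*)\ge0$, which with the inequality above forces $w(z^*)\le C\Lambda$ and so $P(z^*)\le\Lambda\big(C+\|u\|^2_{L^\infty(B)}+\|v\|^2_{L^\infty(B)}\big)$. In either case $\max_{\overline B}P\le C'\Lambda$ with $C'$ depending only on $\|u\|_{L^\infty(B)},\|v\|_{L^\infty(B)}$ and the stated norms of $f,g$; since $\chi(z_0)=1$ this gives $w(z_0)\le P(z_0)\le C'\Lambda=C'(C+CR^{-2}+1)\le C''\big(1+\mathrm{dist}(z_0,\partial\Omega)^{-2}\big)$, which is the claim, and inspection of the constants shows $C''$ has the advertised dependence (the oscillations $osc_Bu,osc_Bv$ enter only if one prefers $(u-\underline u)^2$ in place of $u^2$).

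The main obstacle is that the lower-order term $C(1+w)$ generated by $f_uu_x^2$, $f_vu_xv_x$, $\kappa f_uu_y^2,\dots$ is of the same strength as the quantity $w$ one wants to bound, so the naive choice $P=\chi^4 w$ does not close; the device is the penalization $\Lambda(u^2+v^2)$, whose Bernstein term $-2\Lambda w$ is negative \emph{without} the $\chi^4$ weight, with $\Lambda$ of size $1+R^{-2}$ --- which is precisely what produces the claimed $\mathrm{dist}^{-2}$ scaling. The other delicate point is purely bookkeeping: keeping all the $\kappa$-weights matched (the natural gradient being $(u_x,\sqrt\kappa u_y)$, and the good second-order terms in $D$ carrying the powers $1,2\kappa,\kappa^2$), so that nothing degenerates as $\kappa\downarrow0$. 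The system structure costs essentially nothing, since one simply adds the contributions of the two equations.
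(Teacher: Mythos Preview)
Your argument is correct and follows the same Bernstein-with-cutoff strategy as the paper. The paper carries out the computation with the auxiliary function $P=\chi^2 w+\lambda(u^2+v^2)+2\rho\,e^{x-x_0}$ (power $\chi^2$ rather than $\chi^4$, plus an extra exponential term) and chooses $\lambda,\rho$ so that $\mathcal LP<0$ strictly on the ball, forcing the maximum to lie on $\partial B$; you instead use $\chi^4$, skip the exponential, and read off $w(z^*)\le C\Lambda$ directly from $\mathcal LP\ge0$ at an interior maximum. These are interchangeable variants of the same idea: the higher power $\chi^4$ buys you the extra factor of $\chi$ needed to absorb the cross terms via Young's inequality, which the paper handles instead by completing squares such as $(\chi u_{xy}+2\chi_y u_x)^2-4\chi_y^2u_x^2$; the exponential in the paper is just a convenient way to make the inequality strict. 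Your bookkeeping of the $\kappa$-weights (gradient $(u_x,\sqrt\kappa u_y)$, second-order quantity $D$ with powers $1,2\kappa,\kappa^2$) matches the paper's exactly, and the reduction to $R\le1$ is legitimate since for larger $R$ one may run the argument on the unit ball centered at $z_0$.
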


\begin{proof}
Let $ h $ be the smooth function defined on $ \mathbb R $ by 
$$
h(z):=\left\{\begin{array}{lcl}
\exp\left(\frac{z^2}{z^2-1}\right) &\qquad& |z|< 1 \\
0&\qquad& |z|\geq 1.
\end{array}\right.
$$
Let us then define 
 $ C_0:=\max(\Vert h\Vert_{L^\infty}, \Vert h'\Vert_{L^\infty}, \Vert h''\Vert_{L^\infty})$ and 
$ \zeta(Y, X):=h\left(\frac{\sqrt{Y^2+X^2}}{2}\right) $.

Let $ (y_0, x_0)\in\Omega$ be a given point, $ d_0:=dist((y_0, x_0), \partial\Omega)$,  $ d:=\min\left( \frac{d_0}{2}, 1\right) $, $B_0$ the ball of center $(y_0,x_0)$ and radius $d$. Let
$ \chi $ be the function defined by 
\begin{equation*}
 \forall (y, x)\in\mathbb R^2, \quad \chi(y, x):=\zeta\left(\frac{y-y_0}{d}, \frac{x-x_0}{d}\right).
 \end{equation*}
Finally, let $ P^u$ and $ P^v $ be defined in $ \Omega $ by 
\begin{eqnarray*}
P^u(y, x)&:=&\chi^2(y, x)(u_x^2(y,x) + \kappa u_y^2(y,x))+\lambda u^2(y,x)+\rho e^{x-x_0} \\
P^v(y, x)&:=&\chi^2(y, x)(v_x^2(y,x) + \kappa v_y^2(y,x))+\lambda v^2(y,x)+\rho e^{x-x_0},
\end{eqnarray*}
where $ \lambda >0$ and $ \rho >0$ are constants to be fixed later. Our goal is to apply the maximum principle to the function $ P:=P^u+P^v $ for convenient values of $ \lambda $ and $ \rho$. We present below the computations on $ P^u $ only and reflect them on $ P^v$. 

We first compute the partial derivatives of $ P^u $ and get
\begin{eqnarray*}
P^u_y&=&2\chi_y\chi u_x^2+2\chi^2u_{xy}u_x+2\kappa(\chi_y\chi u_y^2+\chi^2u_{yy}u_y)+2\lambda u_yu \\
P^u_{yy}&=&2(\chi_{yy}\chi + \chi_y^2)u_x^2 + 8\chi_y\chi u_{xy}u_x + 2\chi^2(u_{xyy}u_x+u_{xy}^2) \\
\nonumber & & + \kappa[2(\chi_{yy}\chi + \chi_y^2)u_y^2 + 8\chi_y\chi u_{yy}u_y + 2\chi^2(u_{yyy}u_y+u_{yy}^2)]\\
\nonumber & & + 2\lambda(u_{yy}u + u_y^2) \\
P^u_{xx}&=&2(\chi_{xx}\chi + \chi_x^2)u_x^2 + 8\chi_x\chi u_{xx}u_x + 2\chi^2(u_{xxx}u_x+u_{xx}^2) \\
\nonumber & & \kappa [2(\chi_{xx}\chi + \chi_x^2)u_y^2 + 8\chi_x\chi u_{xy}u_y + 2\chi^2(u_{yxx}u_y+u_{yx}^2)] \\
\nonumber & & + 2\lambda(u_{xx}u + u_x^2) + \rho e^{x-x_0}.
\end{eqnarray*}
Let $ M:= \partial _y- \kappa\partial_{yy}-\partial_{xx}$. Then we have
\begin{equation*}
\begin{array}{rcl} 
	MP^u & = & 2\left[\chi_y\chi - \kappa(\chi_{yy}\chi+\chi_y^2) - (\chi_{xx}\chi+\chi_x^2)\right]u_x^2 \\
	     &   & + 2\kappa \left[\chi_y\chi -\kappa (\chi_{yy}\chi+\chi_y^2)-(\chi_{xx}\chi+\chi_x^2)\right]u_y^2 \\
	     &   & + 2\chi^2\left[u_{xy}-\kappa u_{xyy}- u_{xxx}\right]u_x \\
	     &   & + 2\kappa \chi^2\left[u_{yy}-\kappa u_{yyy}-u_{yxx}\right]u_y \\
	     &   & - 2 \left[\kappa (\chi^2u_{xy}^2 + 4 \chi_y\chi u_{xy}u_x ) + (\chi^2u_{xx}^2 + 4 \chi_x\chi u_{xx} u_x)\right] \\
	     &   & - 2\kappa[\kappa(4\chi_y\chi u_yu_{yy} + \chi^2u_{yy}^2) + (4\chi_x \chi u_y  u_{xy} + \chi^2 u_{xy}^2)] \\
	     &   & + 2 \lambda \left[ (u_y-\kappa u_{yy}-u_{xx}) u -\kappa u_y^2 - u_x^2 \right] \\
	     &   & - \rho e^{x-x_0}.
\end{array}
\end{equation*}
We now reformulate some of the lines of the above equality, starting with lines three and four. We differentiate the first equation of system \eqref{eq:lembernsteinsystem} with respect to $ x $ to obtain
\begin{equation*}
\begin{array}{rcl}
 2\chi^2\left[u_{xy}-\kappa u_{xyy} - u_{xxx}\right]u_x & = & 2\chi^2 (f_x + u_xf_u + v_xf_v)u_x \\
 															 & \leq & \chi^2(u_x^2 +f_x^2) + 2\chi^2u_x^2|f_u| + \chi^2(u_x^2+v_x^2)|f_v|,
\end{array}
\end{equation*}
and then with respect to $ y $ to get
\begin{equation*}
\begin{array}{rcl}
 2\chi^2\left[u_{yy}-\kappa u_{yyy} - u_{yxx}\right]u_y & = & 2\chi^2 (f_y + u_yf_u + v_yf_v)u_y \\
 															 & \leq & \chi^2(u_y^2 +f_y^2) + 2\chi^2u_y^2|f_u| + \chi^2(u_y^2+v_y^2)|f_v|.
\end{array}
\end{equation*}
As far as lines five and six are concerned, we use the factorizations
\begin{eqnarray*}
\chi^2u_{xy}^2 + 4 \chi_y\chi u_{xy}u_x  &=& (\chi u_{xy} + 2\chi_y u_x)^2 - 4\chi_y^2u_x^2\\
\chi^2u_{xx}^2 + 4 \chi_x\chi u_{xx} u_x &=& (\chi u_{xx} + 2 \chi_x u_x)^2 - 4\chi_x^2u_x^2 \\ 
\chi^2u_{yy}^2 + 4 \chi_y\chi u_{yy} u_y &=& (\chi u_{yy} + 2 \chi_y u_y)^2 - 4\chi_y^2u_y^2 \\
\chi^2u_{xy}^2 + 4 \chi_x\chi u_{xy} u_y &=& (\chi u_{xy} + 2 \chi_x u_y)^2 - 4\chi_x^2u_y^2.
\end{eqnarray*}
For line seven, we use the first equation in  \eqref{eq:lembernsteinsystem} 
to write $(u_y-\kappa u_{yy}-u_{xx}) u =  f u$.
As a result, we collect
\begin{equation*}
\begin{array}{rcl}
MP^u & \leq & 2\left[\chi_y\chi - \kappa\chi_{yy}\chi - \chi_{xx}\chi + 3\chi_x^2+3\kappa \chi_y^2+ \chi^2\left(|f_u|+\frac{1+|f_v|}{2}\right) - \lambda\right](u_x^2+\kappa u_y^2) \\
    &      & +2\lambda fu +\chi^2(v_x^2+\kappa v_y^2)|f_v| + \chi^2(f_x^2 + \kappa f_y^2) -\rho e^{x-x_0},
\end{array}
\end{equation*}
and, similarly,
\begin{equation*}
\begin{array}{rcl}
MP^v & \leq & 2\left[\chi_y\chi - \kappa\chi_{yy}\chi - \chi_{xx}\chi + 3\chi_x^2+3\kappa \chi_y^2+ \chi^2\left(|g_v|+\frac{1+|g_u|}{2}\right) - \lambda\right](v_x^2+\kappa v_y^2) \\
     &      & +2\lambda gv +\chi^2(u_x^2+\kappa u_y^2)|g_u| +\chi^2(g_x^2+\kappa g_y^2) -\rho e^{x-x_0}.
     \end{array}
\end{equation*}
Notice that $\vert \chi\vert \leq C_0$, $\vert \chi _x\vert, \vert \chi _y\vert \leq \frac{C_0}{d}$, $\vert \chi _{xx}\vert, \vert \chi _{yy}\vert \leq \frac{C_0}{d^{2}}$ and recall that $\kappa, d \leq 1$. Hence, putting everything together, we arrive at
\begin{equation*}
\begin{array}{rcl}
MP & \leq & \left( 20 \frac{C_0^2}{d^2} + 4C_0^2(\Vert f\Vert_{C^{0,1}} + \Vert g\Vert_{C^{0,1}}) + C_0^2 - \lambda\right)(u_x^2+v_x^2+\kappa u_y^2+\kappa v_y^2) \\
 &  & + 2\lambda(\Vert f\Vert_{L^\infty}+\Vert g\Vert_{L^\infty})(\Vert u\Vert_{L^\infty}+\Vert v\Vert_{L^\infty}) + 2C_0^2(\Vert f\Vert_{C^{0, 1}}^2 + \Vert g\Vert_{C^{0, 1}}^2) - 2 \rho e^{x-x_0}.
\end{array}
\end{equation*}

It is now time to specify
\begin{equation*}
\left\{\begin{array}{rcl}
\lambda & = & 20\frac{C_0^2}{d^2} + 4C_0^2(\Vert f\Vert_{C^{0,1}} + \Vert g\Vert_{C^{0,1}}) + C_0^2 > 0 \\
\rho & = & \frac e2\left[2\lambda(\Vert f\Vert_{L^\infty}+\Vert g\Vert_{L^\infty})(\Vert u\Vert_{L^\infty}+\Vert v\Vert_{L^\infty}) +2C_0^2(\Vert f\Vert_{C^{0, 1}}^2 + \Vert g\Vert_{C^{0, 1}}^2) + 1\right] > 0.
\end{array}\right.
\end{equation*}
As a result we have $MP(y,x)<0$ for all $(y,x)\in B_0$ (since then $x-x_0\geq -1$). The maximum principle then implies
\begin{equation*}
P(y_0, x_0)\leq\underset{(y, x)\in\partial B_0}{\max}P(y,x).
\end{equation*}
Since $\chi(y_0,x_0)=1$ and $\chi(y,x)=0$ when $(y,x)\in \partial B_0$, the above inequality implies
\begin{eqnarray*}
(u_x^2+v_x^2 + \kappa u_y^2+\kappa v_y^2)(y_0,x_0)&\leq& \lambda(\Vert u\Vert_{L^\infty}^2+\Vert v\Vert_{L^\infty}^2) -\lambda (u^2+v^2)(y_0, x_0) + 2\rho e\\
 & \leq & 2\lambda (\Vert u\Vert_{L^\infty} osc_{B_0}(u) + \Vert v\Vert_{L^\infty} osc_{B_0}(v)) + 2\rho e \\
\nonumber & \leq & K\{(\Vert u\Vert_{L^\infty}+\Vert v\Vert_{L^\infty})(osc_{B_0}(u)+osc_{B_0}(v)\\
&&+\Vert f\Vert_{C^{0, 1}} + \Vert g\Vert_{C^{0, 1}})  +  \Vert f\Vert_{C^{0, 1}}^2 + \Vert g\Vert_{C^{0, 1}}^2+ 1\}\left(1+\frac{1}{d^2}\right)
\end{eqnarray*}
using the expressions of $\lambda$ and $\rho$ above, for a universal positive constant $ K>0$ and where the $C^{0,1}$ norms of $ f$, $ g $  are taken on $ B_0\times [\inf_{B_0} u, \sup _{B_0} u]\times [\inf_{B_0}v, \sup_{B_0}v]$. This proves the lemma.
\end{proof}

\section{Dirichlet and periodic principal  eigenvalues}\label{s:diri-perio}

We prove here that the principal eigenvalue with Dirichlet boundary conditions in a ball converges to the principal eigenvalue with periodic boundary conditions, when the radius tends to $ +\infty$.

\begin{lem}[Dirichlet and periodic principal eigenvalues]\label{lem:annexeC}
Let $A\in L^\infty(\mathbb R; \mathcal S_2(\mathbb R)) $ be a symmetric cooperative matrix field  that is periodic with period $ 
L>0$. Let $\l$ be the principal eigenvalue of the operator $-\partial_{xx}-A(x)$ with  periodic boundary conditions, that is
\begin{equation}\label{bidule-chouette}
-\left(\begin{matrix} \varphi \\ \psi\end{matrix}\right)''-A(x)\left(\begin{matrix} \varphi \\ \psi \end{matrix}\right) = \l\left(\begin{matrix} \varphi \\ \psi \end{matrix}\right),
\end{equation}
with $ \varphi, \psi \in H^1_{per} $ and $ \varphi>0, \psi>0$. For $R>0$, let  $ \lambda_1^{R}$ be the principal eigenvalue of the operator $-\partial _{xx}-A(x)$
with Dirichlet boundary conditions on $(-R,R)$, that is
\begin{equation}\label{pb:appendixeigenDirichlet}
-\left(\begin{matrix} \varphi ^R \\ \psi^R\end{matrix}\right)''-A(x)\left(\begin{matrix} \varphi ^R\\ \psi ^R \end{matrix}\right) = \lambda_1^{R}\left(\begin{matrix} \varphi ^R \\ \psi ^R\end{matrix}\right),
\end{equation}
with $ \varphi^R, \psi^R \in H^1_{0}(-R, R) $ and $ \varphi^R>0, \psi^R>0$. Then,
there exists $C>0$ depending only on $A$ such that, for all $R>0$, 
\begin{equation*}
\l\leq\lambda_1^{R}\leq \l+\frac{C}{R}.
\end{equation*}
\end{lem}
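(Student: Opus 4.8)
The plan is to prove the two inequalities separately; both are one-dimensional arguments that must be made uniform in $R$.

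For the lower bound $\l\le\lambda_1^R$, I would slide the Dirichlet eigenfunction under the periodic one. The periodic principal eigenfunction $\Phi={}^t(\varphi,\psi)$ from \eqref{bidule-chouette} is continuous, $L$-periodic and strictly positive, hence bounded below by a positive constant on $[-R,R]$, and there it is an exact positive solution of $-\Phi''-A(x)\Phi=\l\Phi$. Writing $\Phi^R={}^t(\varphi^R,\psi^R)$ for the eigenfunction of \eqref{pb:appendixeigenDirichlet}, the number $t_0:=\sup\{t>0:\ t\Phi^R\le\Phi\text{ on }[-R,R]\}$ is finite and positive (finiteness because $\Phi^R(0)>0$; positivity because $\Phi$ is bounded below by a positive constant and $\Phi^R$ is bounded), and $w:=\Phi-t_0\Phi^R\ge0$ has an interior zero in at least one component, the boundary being excluded since $\Phi>0=t_0\Phi^R$ at $\pm R$. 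Since
$$
\bigl(-\partial_{xx}-A(x)-\l\bigr)w=(\l-\lambda_1^R)\,t_0\Phi^R,
$$
assuming $\lambda_1^R<\l$ would make the right-hand side componentwise positive, and the strong maximum principle for the cooperative, fully coupled operator $-\partial_{xx}-A(x)$ (\cite{Bus-Sir04}, cf. the remark following \eqref{eq:deflinearizedmatrix}) would force $w\equiv0$, contradicting $\Phi>0$ at $\pm R$ while $\Phi^R$ vanishes there. Hence $\lambda_1^R\ge\l$ for every $R>0$.

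For the upper bound I would use the Rayleigh quotient for $\lambda_1^R$, available since $A(x)$ is symmetric, and test it against a cut-off of $\Phi$. Normalizing $\|\Phi\|_{L^\infty}=1$ (the quotient is scale invariant), choose a Lipschitz function $\chi_R$ equal to $1$ on $(-R+1,R-1)$, vanishing outside $(-R,R)$, with $\|\chi_R'\|_{L^\infty}\le1$, and put $w_R:=\chi_R\Phi\in H^1_0(-R,R)\times H^1_0(-R,R)$. Expanding $\int_{-R}^R\bigl({}^tw_R'w_R'-{}^tw_RA(x)w_R\bigr)$ and integrating by parts once (on the $\int\chi_R^2\,{}^t\Phi'\Phi'$ term) using $-\Phi''-A\Phi=\l\Phi$, one gets $\l\int_{-R}^R\chi_R^2\,{}^t\Phi\Phi$ plus a remainder whose integrand is a combination of products of $\chi_R,\chi_R',\Phi,\Phi'$ supported on $\{\chi_R'\ne0\}$, a set of measure at most $2$; since $\|\Phi\|_{L^\infty}=1$ gives $\|\Phi'\|_{L^\infty}\le C(A)$ by elliptic estimates on \eqref{bidule-chouette} (using $|\l|\le C(A)$), this remainder is bounded by a constant $C_1=C_1(A)$. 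On the other hand $\int_{-R}^R\chi_R^2\,{}^t\Phi\Phi\ge\int_{-R+1}^{R-1}{}^t\Phi\Phi\ge c(A)\,R$ once $R$ is large, by periodicity and positivity of $\Phi$. Therefore
$$
\lambda_1^R\le\l+\frac{C_1}{\int_{-R}^R\chi_R^2\,{}^t\Phi\Phi}\le\l+\frac{C}{R},\qquad C=C_1/c(A),
$$
for all $R\ge R_0(A)$; the remaining bounded range of $R$ is absorbed by enlarging $C$, using that $R\mapsto\lambda_1^R$ is nonincreasing (domain monotonicity, immediate from the Rayleigh quotient by extending test functions by zero) and hence bounded on compact subsets of $(0,\infty)$ — only the regime $R\to\infty$ being needed in the paper.

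The main obstacle is bookkeeping of constants: one must check that the bounds on $\Phi$ and $\Phi'$ and the lower bound $\int_{-R}^R\chi_R^2\,{}^t\Phi\Phi\ge c(A)R$ depend only on $A$, which is where the elliptic estimates on the periodic eigenfunction and the strict positivity and periodicity of $\Phi$ enter. The lower-bound sliding step is also slightly delicate, but it is legitimate here precisely because the operator $-\partial_{xx}-A(x)$ is cooperative and fully coupled ($\mu\ge\minmu>0$ off the diagonal), unlike the full system \eqref{syst}.
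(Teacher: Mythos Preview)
Your proof is correct and follows essentially the same approach as the paper: the upper bound via the Rayleigh quotient tested against a cutoff of the periodic eigenfunction $\Phi$, and the lower bound by a comparison/sliding argument (which the paper simply cites as classical). The only cosmetic differences are that the paper restricts to integer $R$ so that the interior piece $\int_{-(R-1)}^{R-1}$ reproduces $\l$ exactly by periodicity, whereas you integrate by parts directly; and you spell out the sliding argument for $\l\le\lambda_1^R$ that the paper omits.
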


\begin{proof} Without loss of generality we assume $L=1$. Inequality $\l\leq \lambda _1 ^R$ is very classical, see \cite[Proposition
4.2]{ber-ham-ros} or \cite[Proposition 3.3]{Alf-Ber-Rao} for instance, and we omit the details. Also, the same classical argument yields that $R\mapsto \lambda _1 ^R$ is nonincreasing so it is enough to prove $\lambda _1^R \leq \l +\frac C R$ when $R=2,3,...$.

 We consider a smooth auxiliary function $\eta:\R\to\R$ satisfying 
\begin{equation*}
\eta \equiv 1 \text{ on } (-\infty,0],\; 0<\eta<1 \text{ on } (0,1), \; \eta\equiv 0 \text{ on } [1,\infty).
\end{equation*}
 Since the operator in \eqref{pb:appendixeigenDirichlet} is self-adjoint in the domain $ (-R, R) $, the principal eigenvalue $\lambda _1  ^{R}$ is given by the Rayleigh quotient
\begin{equation*}\label{eq:infrayleigh}
\lambda_1^{R}=\underset{\Psi\in \mathbf H^1_0(-R,R), \Psi\neq 0}{\inf}Q(\Psi,\Psi),\quad  Q(\Psi, \Psi):=\frac{\int_{-R}^R ( \,^t\Psi_x\Psi_x - \,^t\Psi A(x)\Psi)\mathrm dx}{\int_{-R}^R\,^t\Psi\Psi\mathrm dx}.
\end{equation*}
In particular we have $\lambda _1 ^{R}\leq Q(\Theta,\Theta)$, with $\Theta$
 the $\mathbf H^1_0(-R,R)$ test function defined by 
\begin{equation*}
\Theta (x):=\eta(-R+1-x)\eta(-R+1+x)\Phi(x),\quad \Phi(x):=\left(\begin{matrix} \varphi(x) \\ \psi(x) \end{matrix}\right),
\end{equation*}
where $\varphi, \psi$ are as in \eqref{bidule-chouette}, with the normalization $\int_0 ^{1}\,^t\Phi \Phi \mathrm dx=1$.  We then have 
$Q(\Theta, \Theta)=Q^1(\Theta) + Q^2(\Theta)$, where
$$
Q^1(\Theta):=\frac{\int _{\vert x\vert \leq R-1}(\,^t\Theta_{x}\Theta_{ x} - \,^t\Theta A(x)\Theta )\mathrm dx}{\int_{-R}^R\,^t\Theta\Theta\mathrm dx},\quad
Q^2(\Theta):=\frac{\int _{R-1\leq \vert x\vert \leq R}(\,^t\Theta_{ x}\Theta_{ x} - \,^t\Theta A(x)\Theta)\mathrm dx}{\int_{-R}^R\,^t\Theta\Theta \mathrm dx}.
$$
We write
$$
Q^1(\Theta)=\frac{\int _{\vert x\vert \leq R-1}(\,^t\Theta_{x}\Theta_{ x} - \,^t\Theta A(x)\Theta )\mathrm dx}{\int_{\vert x\vert \leq R-1}\,^t\Theta \Theta\mathrm dx} \frac{\int_{-(R-1)}^{R-1}\,^t\Theta\Theta \mathrm dx}{\int_{-R}^{R}\,^t\Theta \Theta \mathrm dx}=\lambda _1 \frac{\int_{-(R-1)}^{R-1}\,^t\Theta\Theta \mathrm dx}{\int_{-R}^{R}\,^t\Theta \Theta \mathrm dx},
$$
thanks to $\Theta\equiv \Phi \equiv \left(\begin{matrix} \varphi \\ \psi\end{matrix}\right)$ on $(-(R-1),R-1)$ and the 1-periodicity of $\varphi$, $\psi$ (recall that $R-1$ is an integer). As a result 
$$
\vert Q ^1(\Theta)-\lambda _1\vert =\vert \lambda _1\vert \frac{\int_{R-1<\vert x\vert <R}\,^t\Theta\Theta \mathrm dx}{\int_{-R}^{R}\,^t\Theta \Theta \mathrm dx}\leq \vert \lambda _1\vert \frac{\int_{R-1<\vert x\vert <R}\,^t\Phi\Phi \mathrm dx}{\int_{-(R-1)}^{R-1}\,^t\Phi \Phi \mathrm dx}
=\vert \lambda _1\vert \frac{1}{R-1},
$$
since $0\leq \eta \leq 1$. On the other hand one can see that, for a constant $C_2>0$ depending only on $\Vert \eta '\Vert _{L^{\infty}(\R)}$ and $ \Vert A\Vert_{L^\infty(\R; \mathcal S _2(\R))}$, 
\begin{eqnarray*}
\left \vert \int _{R-1< \vert x\vert < R}(\,^t\Theta_{ x}\Theta_{ x} - \,^t\Theta A(x)\Theta)\mathrm dx\right \vert &\leq& C_2 \int_{R-1<\vert x\vert <R}\,(^t\Phi\Phi+^t\Phi_x\Phi_x) \mathrm dx
\\
&=&2 C_2 \int_{0<\vert x\vert <1}\,(^t\Phi\Phi+^t\Phi_x\Phi_x) \mathrm dx=:C_2'
\end{eqnarray*}
so that
$$
\vert Q^{2}(\Theta)\vert \leq \frac{C_2'}{\int_{-R}^R\,^t\Theta\Theta\mathrm dx}\leq \frac{C_2'}{\int_{-(R-1)}^{R-1}\,^t\Phi\Phi\mathrm dx}= \frac{C_2'}{(2R-2)\int _0 ^{1}\,^t\Phi\Phi\mathrm dx}=\frac{C_2'}{(2R-2)}.
$$
This concludes the proof of the lemma.
\end{proof}

\bibliographystyle{siam}    
\bibliography{Biblio_q}

\end{document}